\theoremstyle{plain}
\declaretheorem[title=Theorem, parent=section]{sa}
\declaretheorem[title=Lemma,sibling=sa]{lem}
\declaretheorem[title=Corollary,sibling=sa]{cor}
\declaretheorem[title=Proposition,sibling=sa]{prop}
\newtheorem*{thm*}{Satz}
\theoremstyle{definition}
\declaretheorem[unnumbered,title=Remark]{bem}
\numberwithin{equation}{section}
\newcommand{\R}{  \mathbb{R}}
\newcommand{\Q}{  \mathbb{Q}}
\newcommand{\N}{  \mathbb{N}}
\renewcommand{\P}{ \mathbb{P}}
\renewcommand{\epsilon}{\varepsilon}
\newcommand{\norm}[1]{\left\| #1 \right\|}
\newcommand{\bet}[1]{\left| #1 \right|}
\newcommand{\ska}[1]{\left\langle #1 \right\rangle}
\newcommand{\Ind}[1]{\ensuremath \mathbbm{1}_{#1}}
\renewcommand{\sectionautorefname}{Section}%
\newtheorem{thm}{Theorem}[section]
\newtheorem{note}[thm]{Note}
\newtheorem{ex}[thm]{Example}
\newtheorem{rem}[thm]{Remark}
\def\A{{\mathcal A}}
\def\M{{\mathcal M}}
\def\P{{\mathbb P}}
\def\S{{\mathcal S}}
\def\real{{\mathbb R}}
\def\sphere{{\mathbb{S}}}
\def\Indicator{{\mathbbm{1}}}
\def\ep{\varepsilon}
\def\al{\alpha}
\def\del{\delta}
\def\om{\omega}
\def\gam{\gamma} 
\def\Gam{\Gamma} 
\def\lam{\lambda}
\def\Lam{\Lambda}
\def\grad{\nabla}
\def\union{\bigcup}
\def\intersect{\bigcap}
\newcommand{\abs}[1]{\left| #1 \right|}
\begin{document}\renewcommand{\sectionautorefname}{Section}
\renewcommand{\sectionautorefname}{Section}
\title{H\"older Regularity For Integro-Differential Equations With Nonlinear Directional Dependence}
\author{Moritz Kassmann}
\author{Marcus Rang}
\author{Russell W. Schwab}

\begin{abstract}
We prove H\"older regularity results for a class of nonlinear elliptic integro-differential operators with integration kernels whose ellipticity bounds are strongly directionally dependent.  These results extend those in \cite{CaSi-09RegularityIntegroDiff} and are also uniform as the order of operators approaches 2.
\end{abstract}

\address{Fakultät für Mathematik\\
Universität Bielefeld \\
Postfach 100131 \\
D-33501 Bielefeld}
\email{moritz.kassmann@uni-bielefeld.de}

\address{Fakultät für Mathematik\\
Universität Bielefeld \\
Postfach 100131 \\
D-33501 Bielefeld}
\email{mrang@math.uni-bielefeld.de}

\address{Department of Mathematics\\
Michigan State University\\
619 Red Cedar Road \\
East Lansing, MI 48824}
\email{rschwab@math.msu.edu}

\thanks{\today. We would like to thank L. Silvestre for helpful discussions concerning \cite{CaSi-09RegularityIntegroDiff} as well as bringing some important references to our attention. }

\maketitle

\pagestyle{headings}		
\markboth{}{Non-Degenerate Kernels}

\section{Introduction}\label{sec:intro}

In this note we prove some H\"older regularity results for viscosity solutions of integro-differential equations in which the kernels defining the operators have strong directional dependence and do not need to satisfy everywhere pointwise comparison with the canonical kernel corresponding to the fractional Laplacian.  One of our aims is to bring into better alignment the results which have been known for linear equations from the Probability and Potential Theory communities and those from the Nonlinear PDE community (see \autoref{sec:Background}). 

Before proceeding we mention that we have tried to collect the notation contained herein-- as much as possible-- in Section \ref{Sec:Notation}.  We also include a more detailed discussion of background in Section \ref{sec:Background}.  We first state our main result, and then develop the related operators and background in the remainder of \autoref{sec:intro} and \autoref{sec:Background}.

The simplest example of the operators we study is 

\begin{equation}\label{IntroEq:LinearK}
	Lu(x) = \int_{\real^n} \del^2_h u(x) K(x,h) dh,
\end{equation}
where $\del^2_h u(x)=u(x+h)+u(x-h)-2u(x)$.  The most canonical choice is 
\[
K(x,h) = C(n,\al)\abs{h}^{-n-\al},
\]
which for an appropriate constant $C(n,\al)$ gives $L = -(-\Delta)^{\al/2}$, where $-(-\Delta)^{\al/2}$ is the operator whose Fourier multiplier is $-\abs{\xi}^\al$ (see \cite[Chapter I.1]{Land-72}).
The interesting cases we target are when $K$ is allowed to have large regions where $K(x,h)$ is not necessarily comparable to $\abs{h}^{-n-\al}$ from below-- instead, $K(x,\cdot)$, is only required to be in the class we call $\A_{sec}$, see (\ref{PrelimNotationEq:ClassAsect}).  Furthermore we treat the case of $L$ without assuming any regularity in the $x$ variable.  

The nonlinear directional dependence enters the picture through the assumption that the kernels we treat need only to be above $\abs{h}^{-n-\al}$ on a possibly small set as seen by
\begin{align}\label{specialK}
(2-\alpha)\Ind{V_\xi}(h)\frac{\lambda}{|h|^{n+\alpha}}\leq K(x,h)\leq (2-\alpha) \frac{\Lambda}{|h|^{n+\alpha}} \quad h\in\R^n\setminus\{0\}.
\end{align}
Here $V_\xi \subset \R^n$ is a conical set of the form $V_\xi = \left\{z\in \R^n | \, \bet{\ska{z/\bet{z},\xi}}\geq \delta\right\}$ with $\xi \in \mathbb{S}^{n-1}$, and $\del\in(0,1)$ is fixed throughout.  The direction $\xi$ is allowed to depend on $x$ and $u$, and hence the title of this article.  The flexibility of $\xi$ to depend on $x$ and $u$ presents significant challenges to proving our main result. 

It is by now standard that to study regularity properties of solutions to equations involving $L$ assuming no regularity in $x$, one must in some sense ``treat all equations at once'', see  \cite[Sections 2, 3]{CaSi-09RegularityIntegroDiff}, \cite[Chapter 2]{CaCa-95}, \cite[Chapters 9, 17]{GiTr-98}, \cite{Krylov-1980ControlledDiffusionProcesses}.  This means that rather than study directly solutions of 
\[
Lu(x) = f(x)\ \textnormal{in}\ \Omega,
\]
for $f\in L^\infty(\Omega)$,
we instead study $u$ which simultaneously solve the two inequalities
\[
\inf_{K\in\A}\left\{L_Ku(x)\right\}\leq C\ \text{and}\ \sup_{K\in\A}\left\{L_Ku(x)\right\} \geq -C\ \textnormal{in}\ \Omega.
\]
The class of kernels $\A$ is chosen (in our case, described by (\ref{PrelimNotationEq:ClassAsect})), so that it will at least contain all the $K$ under consideration (and sometimes is a much larger set if one wishes to attain further convenient properties of the extremal operators, e.g. rotational invariance).  The extremal operators are given as
\[
M^-_\A u(x) = \inf_{K\in\A}\left\{L_Ku(x)\right\}\ \text{respectively}\ M^+_\A u(x) = \sup_{K\in\A}\left\{L_Ku(x)\right\}.
\]
By this line of argument, treating the case of $L$ with only bounded measurable dependence in $x$ is basically the same as treating general fully nonlinear equations
\begin{equation}\label{IntroEq:FullyNonlinear}
F(u,x)=f(x),
\end{equation}
as soon as $F$ satisfies the ellipticity assumption
\begin{equation}\label{IntroEq:Ellipticity}
M^-_\A(u-v)(x) \leq F(u,x)-F(v,x) \leq M^+_\A(u-v)(x).
\end{equation}
(see \cite[Sections 2, 3]{CaSi-09RegularityIntegroDiff},  \cite{CaCa-95}).  
We can now see that if a $K([u],x,\cdot)$ is chosen as an optimizer of one of the extremal operators at each $x$, then again 
\begin{equation}\label{IntroEq:DirectionalDependence}
	Lu(x) = \int_{\real^n} \del^2_h u(x) K([u],x,h) dh
\end{equation}
would fall into this same class of bounded measurable coefficients.  This reinforces the notion of nonlinear directional dependence, depending on the unknown, $u$.

The program of studying regularity properties of nonlocal equations such as (\ref{IntroEq:FullyNonlinear}) was presented in \cite{CaSi-09RegularityIntegroDiff}, and here we extend those results to cover the larger class, $\A_{sec}$.  Our main result is the following Hölder regularity estimate.

\begin{sa}[H\"older Regularity]\label{thm:holder}
Let $M^\pm_{\A_{sec}}$ be as defined in (\ref{PrelimNotationEq:MPlusDef}) and (\ref{PrelimNotationEq:MMinusDef}), and let $\al\in(\al_0,2)$.  There are positive constants $\beta \in (0,1)$ and $C \geq 1$ depending only on $n,\lambda,\Lambda,\alpha_0$ and $\delta$ such that if $u\in L^{\infty}(\real^n)$ satisfies in the viscosity sense
\begin{equation}\label{IntroEq:ExtremalBounded}
	M^-_{\A_{sec}}u\leq C'\ \ \text{and}\ \ M^+_{\A_{sec}}u \geq -C'\ \ \text{in}\ \ B_1(0),
\end{equation}
then
\begin{align}\label{hoelderinq}
\norm{u}_{C^\beta(B_{1/2}(0))}\leq C\Big( \sup\limits_{\R^n} \bet{u} + C'\Big).\end{align}
Furthermore $C$ remains bounded $\al\to 2^-$.
\end{sa}

\begin{rem}
	Just as in \cite[Theorem 26]{CaSi-09RegularityByApproximation}, Theorem \ref{thm:holder} also applies to any $u$ such that
	\[
	u(y)(1+\abs{y}^{n+\al_0})^{-1} \in L^{1}(\real^n).
	\]
\end{rem}

\begin{rem}
	An important application of \autoref{thm:holder} is the $C^{1,\beta'}(B_{1/2})$ regularity of solutions of (\ref{IntroEq:FullyNonlinear}) with $f=0$ and a \emph{translation invariant} $F$ satisfying (\ref{IntroEq:Ellipticity}) for a special subclass $\tilde \A_{sec}\subset \A_{sec}$.  Just as in \cite{CaSi-09RegularityIntegroDiff} we impose the additional restriction that for all $K\in\tilde\A_{sec}$ it holds for a fixed $\rho_0>0$ and a uniform $\Tilde C$
	\begin{equation}\label{IntroEq:KernelDifferences}
		\int_{\real^n\setminus B_{\rho_0}}\frac{\abs{K(h)-K(h-z)}}{\abs{z}}dh\leq \Tilde C\ \text{for each}\ \abs{z}<\frac{\rho_0}{2}.
	\end{equation} 
	Then \cite[Theorem 13.1]{CaSi-09RegularityIntegroDiff} carries over to our setting with almost no modifications, and we conclude that also \autoref{thm:holder} implies $C^{1,\beta'}$ regularity for this larger class, $\Tilde\A_{sec}$.  The proof works because \cite[Theorem 13.1]{CaSi-09RegularityIntegroDiff} only uses two main ingredients involving the kernels: the assumption (\ref{IntroEq:KernelDifferences}) and the ellipticity properties of $F$, $M^\pm_{\Tilde\A_{sec}}$.  See also \cite{Kriventsov-2013RegRoughKernelsArXiv} for more general results on $C^{1,\beta'}$ regularity.
\end{rem}

\begin{rem}
	Although we motivated Theorem \ref{thm:holder} using some operators with $x$ dependence, the uniqueness issue for viscosity solutions involving operators as (\ref{IntroEq:LinearK}), (\ref{IntroEq:FullyNonlinear}) is still open.  So far uniqueness is known for translation invariant operators like (\ref{IntroEq:FullyNonlinear})-- \cite[Sections 2-5]{CaSi-09RegularityIntegroDiff}-- and ones which can be written in the so-called L\'evy-Ito form in \cite[Section 2.2]{BaIm-07}.  Typically the L\'evy-Ito form can be rewritten as (\ref{IntroEq:LinearK}).  
\end{rem}

\begin{rem}
	Just as in \cite{CaSi-09RegularityIntegroDiff}, additional difficulty arises from finding a proof of Theorem \ref{thm:holder} in which $C$ remains bounded as $\al\to2^-$.  In this article these difficulties are mostly contained in Sections \ref{sec:ABPIntegro} and \ref{sec:SpecialFunc}.
\end{rem}

\begin{rem}
	In the case that $V_\xi=\real^n$ in (\ref{specialK}),  \cite[Theorem 12.1]{CaSi-09RegularityIntegroDiff} is contained is \autoref{thm:holder}. We use the same methods employed therein-- even some of the statements of the auxiliary results are the same between \cite{CaSi-09RegularityIntegroDiff} and this work.  However, there are significant technical difficulties which arise due to the lower bound in (\ref{specialK}) holding only on a small set, $V_\xi$.  These difficulties are hidden in the proofs of some of the auxiliary lemmas, and so we believe it is important to give a careful presentation of where the difficulties arise and how they are resolved.  The lower bound causes minor changes to the pointwise evaluation property of subsolutions (\autoref{prop:PointwiseEvaluation}) and the ABP substitute (Lemmas \ref{lem:rings}, \ref{lem:geo_concave}, \autoref{thm:cubecover}), and major changes to the construction of the special bump function (all of Section \ref{sec:SpecialFunc}) and the validity of the Harnack inequality (see \autoref{sec:HarnackFail}).  The assumptions and main result of our work are also studied in \cite{BCF12}.  Although the class $\A_{sec}$ is covered in \cite{BCF12}, we believe our presentation of the same results is a useful contribution to the field.  For example, as pointed out here in \autoref{sec:Background} and elaborated in \autoref{sec:HarnackFail}, a strong version of the Harnack inequality as claimed by \cite[Theorem 3.14]{BCF12} fails to hold when the kernels are in \ref{PrelimNotationEq:ClassAsect}. 
\end{rem}

The organization of the article is as follows.  In Section \ref{sec:Background} we review some background related to Theorem \ref{thm:holder}.  In Section \ref{sec:prelim} we collect notation, definitions, and preliminary results regarding (\ref{IntroEq:ExtremalBounded}).  Section \ref{sec:ABPIntegro} is dedicated to proving a nonlocal finite cube substitute for the Aleksandrov-Bakelman-Pucci estimate-- arguably the core of most of the regularity theory for nonlinear equations.  Section \ref{sec:SpecialFunc} is used to construct a special bump function which is crucial to the ``point-to-measure'' estimates.  In Section \ref{sec:PointEsti} we prove the point-to-measure estimates and put together the remaining pieces of the proof.  Finally in Section \ref{sec:Appendix} we present some examples, further results, and further discussion.

\section{Background}\label{sec:Background}

There is a rapidly growing collection of results related to \autoref{thm:holder}.  We will try to focus on the type of results which only depend on the ellipticity constants, $\lam$ and $\Lam$, as well as possibly the order, $\al$, and we emphasize that the list of references presented is not exhaustive.  As one obvious omission, we do not discuss related results for nonlocal Dirichlet forms or divergence form equations.  A discussion of results in these directions can be found in \cite[Section 2]{KassSchwa-2013RegularityNonlocalParaArXiv}.

There are a few interesting distinctions to be made: whether or not $K(x,h)$ is assumed to be even in $h$; whether or not the corresponding equations are linear; whether or not a Harnack inequality holds; and if the methods are probabilistic or PDE.  Especially when it is not assumed that $K(x,-h)=K(x,h)$, \autoref{thm:holder} does not apply to these equations (specified by such $K$).  This non-symmetric class represents an important area for applications.  Results obtained by probabilistic methods in many cases only capture regularity and/or Harnack's inequality for solutions of
\begin{equation}\label{BackgroundEq:harmonic}
Lu(x) = 0\ \text{in}\ B_1.
\end{equation}
In contrast, PDE techniques can usually capture the same behavior for e.g.
\[
Lu(x)\ \text{bounded in}\ B_1.
\]

A more general form of (\ref{IntroEq:LinearK}) is
\begin{equation}\label{BackgroundEq:LinearKnonsym}
	L u(x) = \int_{\R^n}\left(u(x+h)-u(x)-\langle
\nabla u(x), h \rangle \,\mathbbm{1}_{\{|h|\leq 1\}}\right)K(x,h) dh,
\end{equation}
which is usually the natural (non-divergence) form of a general integro-differential operator.  The reduction to (\ref{IntroEq:LinearK}) results from the extra assumption $K(x,h)=K(x,-h)$.

Regularity results (such as \autoref{thm:holder}) as well as the Harnack inequality for linear equations with operators similar to (\ref{IntroEq:LinearK}) obtained by probabilistic methods go back at least to \cite{BaLe02}. There (\ref{BackgroundEq:harmonic}) is treated assuming $K$ is even in $h$ and $V_\xi=\real^n$ in (\ref{specialK}).  H\"older regularity as well as a Harnack inequality are obtained. The results of \cite{BaLe02}-- both regularity and Harnack inequality-- were generalized by \cite{SoVo04}, and the regularity was generalized to variable order situations in \cite{BaKa-05Holder}.  These results were generalized in \cite{MiKa12}, where regularity results were obtained for a kernel with a lower bound just as in (\ref{specialK}).  Finally, higher regularity in the form of Schauder type estimates were obtained in \cite{Bass2009-RegularityStableLikeJFA}.  None of these results are robust as $\al\to2^-$.

In the realm of PDE methods, an important result for H\"older regularity is \cite{silvestre}, where \autoref{thm:holder} is proved for kernels which (\ref{specialK}) holds with $V_\xi=\real^n$, but no symmetry assumptions are made on $K$, and also variable order operators are included.  In \cite{BaChIm-11Holder} regularity results were obtained for kernels very similar to (\ref{specialK}) by a completely different approach now called the ``Ishii-Lions method'' which follows \cite{IshiiLions-1990ViscositySolutions2ndOrder}.  The Ishii-Lions method is quite versatile, and applies more easily to different types of equations other than just uniformly elliptic ones.  However, the results in \cite{BaChIm-11Holder} are not robust in $\al$ and they also depend on a modulus of continuity of $K$ in $x$-- as opposed to only depending on the lower and upper bounds for $K$-- which is often not desirable for applications of \autoref{thm:holder} (e.g. $C^{1,\beta'}$ regularity, \cite[Section 13]{CaSi-09RegularityByApproximation}; homogenization, \cite{Schw-10Per}, \cite{Schw-12StochCPDE}).

The first robust results appeared in \cite{CaSi-09RegularityIntegroDiff}, \cite{CaSi-09RegularityByApproximation}, \cite{CaSi-09EvansKrylov}, where a whole program was developed involving $C^{1,\beta'}$ and then classical regularity  (the Evans-Krylov Theorem for the integro-differential setting) for fully nonlinear equations which captures all of the existing second order theory as a limit $\al\to2^-$.  This led to a surge in related results, and we mention a few.  An important class of kernels are those for which the symmetry $K(x,-h)=K(x,h)$ is not assumed to hold; the results of \cite{CaSi-09RegularityIntegroDiff} were extended to the non-symmetric case in \cite{Chan-2012NonlocalDriftArxiv}, \cite{ChDa-2012NonsymKernels}.  Via \cite[Sections 10, 12]{CaSi-09RegularityIntegroDiff}, H\"older regularity for a smaller-- but different class-- of kernels than those treated in \cite{CaSi-09RegularityIntegroDiff} follows as a straightforward consequence to the Aleksandrov-Bakelman-Pucci type result obtained in \cite{GuSc12}.  The subclass of (\ref{specialK}) in which the direction $\xi$ is fixed for all $K$ was treated in \cite{RangThesis}.  This restriction actually makes the construction of a special bump function harder due to the non-rotational-invariance of the extremal operators in that instance.

An interesting feature of nonlocal equations such as (\ref{BackgroundEq:harmonic}) or (\ref{IntroEq:FullyNonlinear}) is that H\"older regularity and Harnack inequality no longer appear as a joined pair of results-- in contrast to the local ($\al=2$) theory.  We record it here for later reference:

\begin{note}[Harnack Inequality]\label{BackgroundNote:Harnack}
	The operators $L_K$ or $M^\pm$  (for a generic class, $\A$, not necessarily $\A_{sec}$) are said to satisfy the Harnack inequality if there exists a universal positive constant $c$ such that for any globally non-negative $u$ solving (\ref{BackgroundEq:harmonic}), respectively (\ref{IntroEq:ExtremalBounded}) in $B_1$, then 
	\begin{equation}
		u(x_1)\leq c(u(x_2) + C')\ \text{for all}\ x_1,x_2\in B_{1/2},
	\end{equation}
	where $C'=0$ for the case of (\ref{BackgroundEq:harmonic}) and $C'$ is given in (\ref{IntroEq:ExtremalBounded}) otherwise.
\end{note}

Typically in the second order case, one first proves a weak Harnack inequality (\cite[Chapter 8, 9]{GiTr-98} or the $L^\ep$ Lemma \cite[Lemma 4.6]{CaCa-95}, \cite[Theorem 10.3]{CaSi-09RegularityIntegroDiff}) and then deduces the Harnack inequality.  Then the Harnack inequality is used to prove reduction of oscillation and subsequently H\"older continuity (see \cite[Chapter 4]{CaCa-95}, \cite[Chapter 9]{GiTr-98}).  More care is needed in the nonlocal setting because it is not always true that the Harnack inequality holds.  In most of the results mentioned above for the integro-differential setting, H\"older regularity is deduced directly from the point-to-measure estimates, weak Harnack / $L^\ep$ Lemma, or uniform entrance/exit time estimates for a related stochastic process.  

A necessary and sufficient condition for solutions of (\ref{BackgroundEq:harmonic}) to satisfy the Harnack inequality is given in \cite{BoSz05}, where they also provide an example of an $L$ such that $K\in\A_{sec}$ (see (\ref{PrelimNotationEq:ClassAsect})) but the Harnack inequality fails.  We will discuss this example in slightly more detail in Section \ref{sec:Appendix}.  This is interesting because the Harnack inequality is proved for the fully nonlinear integro-differential case when $V_\xi=\real^n$ in (\ref{specialK}),  \cite[Section 11]{CaSi-09RegularityIntegroDiff}.  The Harnack inequality is also stated to hold in \cite{BCF12} for a class which contains $\A_{sec}$, which cannot be true by \cite[Theorem 1 and Example of p.148]{BoSz05}.  We will further discuss in Section \ref{sec:Appendix} where the proof of \cite[Section 11]{CaSi-09RegularityIntegroDiff} breaks down when one considers the larger class $\A_{sec}$, (\ref{PrelimNotationEq:ClassAsect}).


\section{Preliminaries}\label{sec:prelim}

\subsection{Notation}\label{Sec:Notation}
We first collect some notations which will be used throughout this article.
\allowdisplaybreaks
\begin{align}
	& \al\in(\al_0,2)\ \text{is the order of the operators}\nonumber\\
	& \del\in(0,1)\ \text{is the opening of a conical sector}\nonumber\\
	&V_\xi = \left\{z\in\real^n\ :\ \abs{\frac{z}{\abs{z}}\cdot\xi}\geq \del\right\}\label{PrelimNotationEq:Sector}\\
	&\A_{sec} = \bigg\{K:\real^n\to\real\ :\ K(-h)=K(h),\nonumber\\  
	&\ \ \ \ \ \ \ \ \ \ \ \ \ \ \text{and}\ \exists\ \xi\in\sphere^{n-1}\ \text{s.t.}\ \Ind{V}(h)\frac{\lambda(2-\alpha)}{|h|^{n+\alpha}}\leq K(h)\leq  \frac{\Lambda(2-\alpha)}{|h|^{n+\alpha}}\bigg\} \label{PrelimNotationEq:ClassAsect}\\
	&u\in C^{1,1}(x),\ \text{if}\ \exists\ v\in\real^n\ \text{and}\ A>0\ \text{s.t.}\nonumber\\ 
	&\ \ \ \ \ \ \bet{u(x+h)-u(x)-\ska{v,h}}\leq A\bet{h}^2\ \text{for h small enough}\nonumber\\
	&\del^2_hu(x) = u(x+h)+u(x-h)-2u(x)\label{PrelimNotationEq:SecondDiffDef}\\
	&L_Ku(x) = \int_{\real^n}\del^2_h u(x) K(h) dh\nonumber\\
	&M^+_{\A_{sec}}u(x) = \sup_{K\in\A_{sec}}\left\{L_Ku(x)\right\}\label{PrelimNotationEq:MPlusDef}\\
	&M^-_{\A_{sec}}u(x) = \inf_{K\in\A_{sec}}\left\{L_Ku(x)\right\}\label{PrelimNotationEq:MMinusDef}\\
	&\frac{1}{2}V_\xi = \left\{z\in\real^n\ :\ \abs{\frac{z}{\abs{z}}\cdot\xi}\geq \frac{\del+1}{2}\right\}\label{PrelimNotationEq:HalfSector}\\
	&Q_l(x_0) =\left\{x\in\R^n\, :\, \bet{x-x_0}_\infty<\tfrac{l}{2}\right\}\nonumber\\
	&tQ_l(x_0)=\left\{x\in\R^n\, :\, \bet{x-x_0}_\infty<\tfrac{tl}{2}\right\}\nonumber\\
	&B_l(x_0)=\left\{x\in\R^n\, :\, \bet{x-x_0} < l \right\}\nonumber\\
	&\mu(dh) = \abs{h}^{-n-\al}dh\nonumber\\
	&dS_r,\ dS\ \text{are respectively surface measure on the spheres}\ \partial B_r, \partial B_1\nonumber
\end{align}

We use $\bet{\cdot}$ for the absolute value, the Euclidean norm, and the $n$-dimensional Lebesgue measure at the same time. Throughout this article $\Omega\subset\R^n$ denotes a bounded domain.
For cubes and balls such that $x_0=0$ we write $Q_l$ instead of $Q_l(0)$ and similarly for $B_l$. Note that the following implications hold:
\[ B_{1/2} \subset Q_1 \subset Q_3 \subset B_{\tfrac{3\sqrt{n}}{2}} \subset B_{2 \sqrt{n}} \,. \]
Note that $2Q_1  \ne  \{2x \in\R^n: x \in Q_1 \}$.

\subsection{Definitions}
We use the definitions and basic properties of viscosity solutions from \cite[Sections 3, 4, 5]{CaSi-09RegularityIntegroDiff} and \cite{BaIm-07} for (\ref{IntroEq:ExtremalBounded}) .  The curious reader should also see the presentation and references from the local theory in \cite{CrIsLi-92}.

\subsection{Pointwise Evaluation}\label{sec:subsecPointWiseEvaluation}

A very useful feature of the viscosity solution theory for integro-differential equations is that viscosity subsolutions themselves-- not only the test functions-- can be used to evaluate their corresponding equation classically at all of the points where the equation is expected to hold in the weak sense-- that is at all points where the subsolution can be touched from above by a smooth test function.  Although this aspect of the theory is not complicated, one must proceed carefully because in the general situation of $K\in\A_{sec}$ one no longer has the convenient result from previous cases (cf. \cite[Lemma 4.3]{CaSi-09RegularityIntegroDiff}) which says that at points where $u$ is touched from above by a smooth function, the extremely strong result holds:
\begin{equation*}
	\int_{\real^n}\abs{\del^2_hu(x)}\abs{h}^{-n-\al}dh <\infty.
\end{equation*}
Although this convenient regularity on $u$ is false in the general cases treated herein (see \autoref{sec:PointwiseExamples}), the definitions of $M^+$ and viscosity subsolutions together guarantee enough regularity on $u$ so that the equation can still be evaluated classically at points of being touched by a test function.  The main result we use is Proposition \ref{prop:PointwiseEvaluation} (cf. \cite[Lemma 4.3]{CaSi-09RegularityIntegroDiff}).

Since in this work, we only need this property for the equation
\begin{equation}\label{PointwiseEq:USubsolution}
		M^+_{\A_{sec}}u(x)\geq -f(x)\ \text{in}\  B_1
\end{equation}
we only state the results as they pertain to this particular one.  Here we assume for the sake of simplicity that $f\in C(\overline{B_1})$.  We also note that the pointwise evaluation holds for general $F$ which are elliptic with respect any class which satisfies the upper bound of (\ref{specialK}).  We include further discussion on this matter in \autoref{sec:Appendix}.

\begin{prop}[Pointwise Evaluation]\label{prop:PointwiseEvaluation}
	Assume $u$ solves (\ref{PointwiseEq:USubsolution}) in the viscosity sense.  If for $\phi\in C^{1,1}(x)\cap L^\infty(\real^n)$,  $u-\phi$ has a global maximum at $x\in B_1$, then $M^+_{\A_{sec}}$ can be evaluated classically on $u$ at $x$, and $M^+_{\A_{sec}}u(x)\geq -f(x)$.
\end{prop}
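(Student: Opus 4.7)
The plan is to execute the standard viscosity test-function argument while circumventing the fact that for certain $K \in \A_{sec}$ the integral $L_K u(x)$ can legitimately equal $-\infty$. The key tool will be the explicit extremal form of $M^+_{\A_{sec}}$, which reduces the supremum over $\A_{sec}$ to an infimum over the compact parameter set $\sphere^{n-1}$.

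First I would observe that $u \le \phi$ globally with equality at $x$, together with $\phi \in C^{1,1}(x) \cap L^\infty(\real^n)$, forces $\delta^2_h u(x) \le \delta^2_h \phi(x)$ for every $h$; the symmetric-difference cancellation of first-order terms yields $\delta^2_h \phi(x) \le 2A|h|^2$ for small $h$, and $\phi$ is bounded for large $h$. Combining with the common upper bound $K(h) \le \Lambda(2-\alpha)|h|^{-n-\alpha}$ available on all of $\A_{sec}$, one gets
\[
\int_{\real^n} (\delta^2_h u(x))^+ K(h)\, dh \le C(\phi, n, \Lambda)
\]
uniformly in $K \in \A_{sec}$, so each $L_K u(x)$ is well-defined in $[-\infty, C]$ and the classical value of $M^+_{\A_{sec}} u(x) := \sup_K L_K u(x)$ is unambiguous.

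For each $r > 0$, set $v_r := \phi \mathbbm{1}_{B_r(x)} + u \mathbbm{1}_{\real^n \setminus B_r(x)}$; since $v_r$ is $C^{1,1}$ near $x$ and coincides with $u$ outside $B_r(x)$, the viscosity subsolution definition gives $M^+_{\A_{sec}} v_r(x) \ge -f(x)$ in the classical sense. The proposition reduces to showing $M^+_{\A_{sec}} v_r(x) \to M^+_{\A_{sec}} u(x)$ as $r\to 0$. Here I would invoke the pointwise-in-$K$ optimization identity
\[
M^+_{\A_{sec}} v(x) = P_v - \inf_{\xi \in \sphere^{n-1}} N^v_\xi,
\]
obtained by choosing $K$ maximal on $\{\delta^2_h v > 0\}$ and minimal ($= \lambda(2-\alpha)\mathbbm{1}_{V_\xi}|h|^{-n-\alpha}$) on $\{\delta^2_h v < 0\}$, where $P_v := \Lambda(2-\alpha)\int (\delta^2_h v)^+|h|^{-n-\alpha}\, dh$ and $N^v_\xi := \lambda(2-\alpha)\int_{V_\xi} (\delta^2_h v)^-|h|^{-n-\alpha}\, dh$. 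Dominated convergence with the $r$-independent dominant $(\delta^2_h \phi)^+ |h|^{-n-\alpha}$ yields $P_{v_r} \to P_u$, and $\delta^2_h u \le \delta^2_h \phi$ gives $N^{v_r}_\xi \le N^u_\xi$ for every $\xi$, hence $\inf_\xi N^{v_r}_\xi \le \inf_\xi N^u_\xi$.

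The matching inequality uses compactness of $\sphere^{n-1}$: for each $r > 0$ the continuous finite function $\xi \mapsto N^{v_r}_\xi$ attains its minimum at some $\xi^*_r$, and along a subsequence $\xi^*_{r_k} \to \xi^\infty$. Fatou's lemma applied to the nonnegative integrand $(\delta^2_h v_{r_k})^- \mathbbm{1}_{V_{\xi^*_{r_k}}}(h)\, |h|^{-n-\alpha}$, whose a.e.\ limit is $(\delta^2_h u)^- \mathbbm{1}_{V_{\xi^\infty}}(h)\, |h|^{-n-\alpha}$, produces
\[
\inf_\xi N^u_\xi \le N^u_{\xi^\infty} \le \liminf_k \inf_\xi N^{v_{r_k}}_\xi,
\]
which combined with the earlier direction forces $\inf_\xi N^{v_r}_\xi \to \inf_\xi N^u_\xi$. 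Hence $M^+_{\A_{sec}} v_r(x) \to M^+_{\A_{sec}} u(x)$, and passing to the limit in $M^+_{\A_{sec}} v_r(x) \ge -f(x)$ finishes the proof. The hard part is precisely this Fatou-plus-compactness step: the naive approach of selecting a near-maximizing $K_r \in \A_{sec}$ and extracting a weak-$*$ limit fails because $K \mapsto L_K u(x)$ is not continuous on $\A_{sec}$ once $-\infty$ values are allowed, and only after decoupling into a positive-part integral and a direction-indexed infimum does the passage to the limit reduce to a compactness argument on $\sphere^{n-1}$.
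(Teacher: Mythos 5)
Your proof is correct and is essentially the paper's argument: the paper factors it into Lemma~\ref{lem:FormulaForMPlus} (the extremal formula) and Lemma~\ref{PointwiseLem:MPlusUSC} (upper semicontinuity, proved by the same Fatou plus dominated-convergence plus compactness-of-$\sphere^{n-1}$ step you carry out, extracting an accumulation point $\xi_0$ of the optimizing directions). Your writeup inlines these two lemmas and additionally establishes full convergence of $\inf_\xi N^{v_r}_\xi$ rather than just the one-sided inequality $\liminf_r \inf_\xi N^{v_r}_\xi \geq \inf_\xi N^u_\xi$ that the paper needs, but the mechanism is identical.
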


Although the proof of Proposition \ref{prop:PointwiseEvaluation} is straightforward, we break it into two separate lemmas for clarity.

\begin{lem}[Extremal Formula]\label{lem:FormulaForMPlus}
	Assume $u\in C^{1,1}(x)\intersect L^\infty(\real^n)$.  Then we have the formulas
	\begin{equation}\label{PointEq:FormulaMPlus}
		M^+_{\A_{sec}}u(x) = \sup_{\xi\in\sphere^{n-1}}\left((2-\al)\int_{\real^n} \left(\Lam(\del^2_h u(x))^+ -\lam(\del^2_h u(x))^-\Indicator_{V_{\xi}}(h)\right)\mu(dh) \right)
	\end{equation}
	\begin{equation}\label{PointEq:FormulaMMinus}
		M^-_{\A_{sec}}u(x) = \inf_{\xi\in\sphere^{n-1}}\left((2-\al)\int_{\real^n} \left(\lam(\del^2_h u(x))^+\Indicator_{V_\xi}(h) -\Lam(\del^2_h u(x))^- \right)\mu(dh) \right)
	\end{equation}
\end{lem}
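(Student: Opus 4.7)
The plan is to compute $M^{\pm}_{\A_{sec}}u(x)$ by reducing the supremum/infimum over $\A_{sec}$ to a two-level optimization: outer over the sector direction $\xi \in \sphere^{n-1}$, and inner over those $K\in\A_{sec}$ whose lower bound in (\ref{PrelimNotationEq:ClassAsect}) uses that specific $\xi$. For fixed $\xi$ the inner problem is a bounded linear functional of $K$ subject to pointwise constraints, and is solved by a bang-bang choice of $K(h)$ as a function of the sign of $\del^2_h u(x)$.

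First I would record that the hypothesis $u\in C^{1,1}(x)\cap L^\infty(\R^n)$ gives $|\del^2_h u(x)|\leq 2A|h|^2$ for $|h|\leq r_0$ and $|\del^2_h u(x)|\leq 4\norm{u}_{L^\infty}$ everywhere, so that
\[
(2-\al)\int_{\R^n}|\del^2_h u(x)|\,\mu(dh)\leq C(n,\al_0,A,r_0,\norm{u}_{\infty}),
\]
with the factor $(2-\al)$ absorbing the singularity of $\int_0^{r_0} r^{1-\al}\,dr$. In particular every integral on the right-hand side of (\ref{PointEq:FormulaMPlus})--(\ref{PointEq:FormulaMMinus}), as well as every $L_Ku(x)$ for $K\in\A_{sec}$, is well-defined, with a bound that is robust as $\al\to 2^-$.

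Next, write $\A_{sec}=\bigcup_{\xi\in\sphere^{n-1}}\A^\xi_{sec}$, where $\A^\xi_{sec}$ is the subclass of kernels with that direction $\xi$ in (\ref{PrelimNotationEq:ClassAsect}), so that $M^+_{\A_{sec}}u(x)=\sup_\xi\sup_{K\in\A^\xi_{sec}} L_K u(x)$. For fixed $\xi$ the integrand $\del^2_h u(x)\,K(h)$ is linear in $K(h)$ on the interval $[\lam(2-\al)\Ind{V_\xi}(h)|h|^{-n-\al},\,\Lam(2-\al)|h|^{-n-\al}]$, so the pointwise maximizer is
\[
K^\xi_+(h)=\begin{cases}\Lam(2-\al)|h|^{-n-\al}, & \del^2_h u(x)\geq 0,\\ \lam(2-\al)\Ind{V_\xi}(h)|h|^{-n-\al}, & \del^2_h u(x)<0.\end{cases}
\]
Integrating yields $\sup_{K\in\A^\xi_{sec}}L_Ku(x)=(2-\al)\int_{\R^n}\bigl(\Lam(\del^2_hu(x))^+ - \lam(\del^2_hu(x))^-\Ind{V_\xi}(h)\bigr)\mu(dh)$, and taking $\sup_\xi$ delivers (\ref{PointEq:FormulaMPlus}). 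The companion formula (\ref{PointEq:FormulaMMinus}) follows identically, by swapping the roles of upper and lower bounds and using the bang-bang minimizer.

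The one step requiring genuine care --- what I would treat as the \emph{main obstacle}, although it is ultimately bookkeeping --- is verifying that $K^\xi_+$ actually lies in $\A^\xi_{sec}$: namely measurability, which follows from continuity of $h\mapsto \del^2_h u(x)$ on $\R^n\setminus\{0\}$ given $u\in C^{1,1}(x)\cap L^\infty$, and the symmetry $K^\xi_+(-h)=K^\xi_+(h)$, which follows from $V_\xi=-V_\xi$ together with $\del^2_h u(x)=\del^2_{-h}u(x)$. Without those two properties $K^\xi_+$ would produce only a pointwise upper bound for $\sup L_K u(x)$ rather than an attained supremum, and the formulas in the lemma would be weakened from equalities to inequalities.
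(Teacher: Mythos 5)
Your proof is correct and uses essentially the same idea as the paper: both arguments reduce to the bang-bang pointwise optimizer $K^\xi_+$ for the linear functional $K\mapsto L_K u(x)$; your two-level decomposition $\sup_{\xi}\sup_{K\in\A^\xi_{sec}}$ packages cleanly what the paper obtains by proving the inequalities $\leq$ and $\geq$ separately, and has the minor advantage of not needing the supremum over $\xi$ to be attained (the paper invokes \autoref{PrelimRem:MPlusFormulaUSC} for that). One small correction: under the hypothesis $u\in C^{1,1}(x)\cap L^\infty(\R^n)$ the map $h\mapsto\del^2_h u(x)$ need not be continuous on $\R^n\setminus\{0\}$ (that would require continuity of $u$ away from $x$, which is not assumed); what you actually need, and what does hold, is measurability of $h\mapsto\del^2_h u(x)$, which follows because $u$ is Lebesgue measurable, and this already guarantees $K^\xi_+\in\A^\xi_{sec}$.
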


\begin{rem}
	\autoref{lem:FormulaForMPlus} is not absolutely necessary to prove \autoref{prop:PointwiseEvaluation}, but we think it is useful in its own right and hence include it here.
\end{rem}

\begin{rem}\label{PrelimRem:MPlusFormulaUSC}
	We note that for a fixed $u$ in the set (\ref{PointwiseEq:Dominated}), by Fatou's Lemma the map
	\[
	\xi\mapsto \int_{\real^n} \left(\Lam(\del^2_h u(x))^+ -\lam(\del^2_h u(x))^-\Indicator_{V_{\xi}}(h)\right)\mu(dh)
	\]
	is upper semi-continuous.  Hence the $\sup$ in (\ref{PointEq:FormulaMPlus}) is achieved for any such $u$.  This is not necessary in the arguments below, but useful for reference and simplification, and so we included it.
\end{rem}

\begin{lem}[Upper Semi-continuity]\label{PointwiseLem:MPlusUSC}
	For $x$ fixed, the functional 
	\begin{equation*}
		v\mapsto M^+_{\A_{sec}}(v,x)
	\end{equation*}
	is upper semicontinuous with respect to pointwise convergence in $h$ of $\del^2_h v(x)$ in the space of functions
	\begin{equation}\label{PointwiseEq:Dominated}
		\{v\ :\ v(h)\leq \phi(h)\ \text{for all}\ h\ \text{and}\ v(x)=\phi(x)\},
	\end{equation}
	for some fixed $\phi\in C^{1,1}(x)\cap L^\infty(\real^n)$.
\end{lem}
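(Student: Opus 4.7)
The plan is to apply the extremal formula (\ref{PointEq:FormulaMPlus}) from Lemma \ref{lem:FormulaForMPlus}, which by the symmetry of $K$ and pointwise optimisation over the bounds in (\ref{PrelimNotationEq:ClassAsect}) remains valid for any $v$ in the dominated class (with the value possibly $-\infty$), and then to reduce the claim to a Fatou / reverse-Fatou argument coupled to the compactness of $\sphere^{n-1}$. Write
\[
F_\xi(v)\ :=\ (2-\al)\int_{\real^n}\bigl(\Lam(\del^2_h v(x))^+-\lam(\del^2_h v(x))^-\Ind{V_\xi}(h)\bigr)\mu(dh),
\]
so that $M^+_{\A_{sec}}(v,x)=\sup_{\xi\in\sphere^{n-1}}F_\xi(v)$, with the supremum attained at some $\xi^*=\xi^*(v)$ by Remark \ref{PrelimRem:MPlusFormulaUSC}.

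First I would use the envelope $\phi$ to produce a $v$-uniform integrable majorant. The two conditions $v\le\phi$ on $\real^n$ and $v(x)=\phi(x)$ give $\del^2_h v(x)\le\del^2_h \phi(x)$ for every $h$, hence $(\del^2_h v(x))^+\le(\del^2_h \phi(x))^+$. Since $\phi\in C^{1,1}(x)\cap L^\infty(\real^n)$, the majorant is $O(\bet{h}^2)$ near $h=0$ and $O(1)$ at infinity, and therefore lies in $L^1((2-\al)\mu)$. Next, given $v_n\to v$ in the stated sense, pass to a subsequence (not relabelled) realising $\limsup_n M^+_{\A_{sec}}(v_n,x)$, pick $\xi_n^*\in\sphere^{n-1}$ with $F_{\xi_n^*}(v_n)=M^+_{\A_{sec}}(v_n,x)$, and by compactness extract a further subsequence with $\xi_n^*\to\xi^*$. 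Reverse Fatou under the $\phi$-majorant yields
\[
\limsup_n\int\Lam(\del^2_h v_n(x))^+\mu(dh)\ \le\ \int\Lam(\del^2_h v(x))^+\mu(dh),
\]
while ordinary Fatou applied to the nonnegative integrand $(\del^2_h v_n(x))^-\Ind{V_{\xi_n^*}}$ gives
\[
\liminf_n\int\lam(\del^2_h v_n(x))^-\Ind{V_{\xi_n^*}}(h)\mu(dh)\ \ge\ \int\lam(\del^2_h v(x))^-\Ind{V_{\xi^*}}(h)\mu(dh).
\]
Subtracting these two inequalities produces $\limsup_n F_{\xi_n^*}(v_n)\le F_{\xi^*}(v)\le M^+_{\A_{sec}}(v,x)$, which is the claimed upper semicontinuity.

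The hard part is the second Fatou estimate, in which the indicator $\xi\mapsto\Ind{V_\xi}(h)$ is discontinuous and is being limited \emph{jointly} with $\del^2_h v_n(x)$. The resolution is that the cone boundary $\partial V_{\xi^*}$ is Lebesgue-null (hence $\mu$-null), so $\Ind{V_{\xi_n^*}}(h)\to\Ind{V_{\xi^*}}(h)$ for $\mu$-a.e.\ $h$, and from this a short case split according to whether $h$ lies in the interior of $V_{\xi^*}$ or of its complement yields $\liminf_n(\del^2_h v_n(x))^-\Ind{V_{\xi_n^*}}(h)\ge(\del^2_h v(x))^-\Ind{V_{\xi^*}}(h)$ a.e. Once this pointwise inequality is in hand, the rest is the standard machinery of reverse Fatou under an $L^1$ envelope, and no continuity of $\xi\mapsto \Ind{V_\xi}$ is required.
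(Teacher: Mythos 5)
Your proof is correct and matches the paper's: both invoke the extremal formula of Lemma~\ref{lem:FormulaForMPlus}, extract a convergent subsequence of optimizing directions $\xi_m\to\xi_0$, and apply Fatou to the negative-part integrand and dominated convergence (your ``reverse Fatou'' under the $\phi$-majorant) to the positive part. You are slightly more careful than the published argument in noting that $\Ind{V_{\xi_m}}\to\Ind{V_{\xi_0}}$ only $\mu$-a.e.\ because the cone boundary is Lebesgue-null, whereas the paper states this as plain pointwise convergence without comment.
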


\begin{proof}[Proof of Lemma \ref{lem:FormulaForMPlus}]
	We only prove the case for $M^+_{\A_{sec}}$.  Let $\xi(u)\in\sphere^{n-1}$ be the optimizer for the right hand side of (\ref{PointEq:FormulaMPlus}), see \autoref{PrelimRem:MPlusFormulaUSC}.  We note that for $u$ fixed, the kernel,
	\begin{equation*}
		K_u(h)= (2-\al)\left(\lam\Indicator_{\{\del^2_z u(x)<0\}}(h)\Indicator_{V_{\xi(u)}}(h)+ \Lam\Indicator_{\{\del^2_z u(x)>0\}}(h)\right)\abs{h}^{-n-\al},
	\end{equation*}
	is in the set $\A_{sec}$.  Hence by the definition of $M^+_{\A_{sec}}$, (\ref{PrelimNotationEq:MPlusDef}), we see that the left side of (\ref{PointEq:FormulaMPlus}) is greater or equal to the right hand side.
	
	Now for the reverse inequality. Let $K$ be any other kernel in $\A_{sec}$, and let $\xi$ be the direction for the lower bound of $K$ in (\ref{specialK}).
	\begin{align*}
		&\int_{\real^n}\del^2_h u(x)K(h)dh=\int_{\real^n} \left((\del^2_h u(x))^+-(\del^2_h u(x))^-\right)K(h)dh\\
		&\ \ \ \leq (2-\al)\int_{\real^n} \left(\Lam(\del^2_h u(x))^+ -\lam(\del^2_h u(x))^-\Indicator_{V_\xi}(h)\right)\mu(dh)\\
		&\ \ \ \leq \sup_{\xi\in\sphere^{n-1}}\left((2-\al)\int_{\real^n} \left(\Lam(\del^2_h u(x))^+ -\lam(\del^2_h u(x))^-\Indicator_{V_\xi}(h)\right)\mu(dh)\right)
	\end{align*}
\end{proof}

\begin{proof}[Proof of Lemma \ref{PointwiseLem:MPlusUSC}]
	Assume that $\del^2_h v_m(x)\to \del^2_h v(x)$ pointwise in $h$. Let $\xi_m$ be optimizers of (\ref{PointEq:FormulaMPlus}) for $v_m$ (see \autoref{PrelimRem:MPlusFormulaUSC}), and let $\xi_0$ be any accumulation point of $\{\xi_m\}$.  We note that $v_m$ satisfying (\ref{PointwiseEq:Dominated}) implies $\displaystyle M^+_{\A_{sec}}v_m(x)\in [-\infty, C(\phi)]$, and so either an optimizing $\xi$ exists or $M^+_{\A_{sec}}v_m(x)=-\infty$, in which case we can assign $\xi_m$ as any element of $\sphere^{n-1}$.  Then we have the pointwise convergence for both
	\begin{equation*}
		(\del^2_h v_m(x))^-\Indicator_{V_{\xi_m}}(h)\abs{h}^{-n-\al}\to (\del^2_h v(x))^-\Indicator_{V_{\xi_0}}(h)\abs{h}^{-n-\al}
	\end{equation*}  
	and
	\begin{equation*}
		(\del^2_h v_m(x))^+\abs{h}^{-n-\al}\to (\del^2_h v(x,h))^+\abs{h}^{-n-\al}.
	\end{equation*}
	This implies that
	\begin{align}
		&\limsup_{m\to\infty}M^+_{\A_{sec}}(v_m,x)\nonumber\\ 
		&\  = (2-\al)\limsup_{m\to\infty} \left(-\lam\int_{\real^n} (\del^2_h v_m(x))^-\Indicator_{V_{\xi_m}}(h)\abs{h}^{-n-\al} dh + \Lam \int_{\real^n} (\del^2_h v_m(x))^+\abs{h}^{-n-\al}dh\right)\label{PointwiseEq:MPlusUSC1} \\
		&\ \leq (2-\al)\left(-\lam\int_{\real^n} (\del^2_h v(x))^-\Indicator_{V_{\xi_0}}(h)\abs{h}^{-n-\al}dh + \Lam \int_{\real^n} (\del^2_h v(x))^+\abs{h}^{-n-\al}dh\right)\nonumber\\
		&\ \leq M^+_{\A_{sec}}(v,x),\nonumber
	\end{align}
	where we applied respectively Fatou's Lemma and dominated convergence (hence using (\ref{PointwiseEq:Dominated})) to the first and second terms of (\ref{PointwiseEq:MPlusUSC1}).
\end{proof}

\begin{proof}[Proof of Proposition \ref{prop:PointwiseEvaluation}]
	We define the auxiliary functions
	\begin{equation*}
		\phi_r(y) =
		\begin{cases}
			\phi(y)\ &\text{if}\ y\in B_r(0)\\
			u(y)\ &\text{if}\ y\in \real^n\setminus B_r(0).
		\end{cases}
	\end{equation*}
	Proposition \ref{prop:PointwiseEvaluation} now follows directly from the facts that $u-\phi_r$ has a global maximum at $x$, $u$ is a viscosity subsolution of (\ref{PointwiseEq:USubsolution}), 
	$\del^2_h \phi_r(x,\cdot)\to \del^2_h u(x,\cdot)$ pointwise, and Lemma \ref{PointwiseLem:MPlusUSC}.  In particular
	\begin{equation*}
		-f(x)\leq M^+_{\A_{sec}}(\phi_r,x)
	\end{equation*} 
	and hence
	\begin{equation*}
		-f(x) \leq \limsup_{r\to0}M^+_{\A_{sec}}(\phi_r,x)\leq M^+_{\A_{sec}}(u,x).
	\end{equation*}
\end{proof}

\subsection{Continuity of Nonlinear Operators}

It is useful to know that general operators like 
\begin{equation*}
	F(u,x) = \inf_{a\in \S} \sup_{K\in\A_a}\left\{\int_{\real^n}\del^2_h u(x)K(h)dh\right\},
\end{equation*}
map $C^{1,1}(\Omega)\cap L^{\infty}(\real^n)\to C(\Omega)$, where $\Omega$ is an open domain.  A proof on this appears in \cite{CaSi-09RegularityIntegroDiff}, and their proof carries over immediately to our situation of $\A_{sec}$.  Although not exactly stated as such, the result of \cite{CaSi-09RegularityIntegroDiff} applies to very general situations.  We make no assumptions on $\A_a\subset\A$ other than those stated in the Lemma:

\begin{lem}[{\cite[Lemma 4.1]{CaSi-09RegularityIntegroDiff}}]\label{lem:ContinuityOfF}
	Assume that for each $r>0$,
	\[
	G(h) = \sup_a\sup_{K\in\A_a}\left( K(h) \right) \in L^1(\real^n\setminus B_r)
	\]
 (the $L^1$ norm can depend on $r$), and that 
	\[
	\lim_{s\to0} \int_{B_s}\abs{h}^2 G(h)dh = 0.
	\] 
	If $\phi\in C^{1,1}(\Omega)\cap L^\infty(\real^n)$, then $F(\phi,\cdot)\in C(\Omega)$.
\end{lem}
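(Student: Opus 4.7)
My plan is to prove continuity of $F(\phi,\cdot)$ at an arbitrary point $x_0\in\Omega$ by bounding $|F(\phi,x)-F(\phi,x_0)|$ as $x\to x_0$, uniformly over the choices of $a$ and $K$. The standard inequality
\[
\bet{\inf_a\sup_b A(a,b)-\inf_a\sup_b B(a,b)}\leq \sup_a\sup_b\bet{A(a,b)-B(a,b)}
\]
reduces matters to estimating
\[
\sup_a\sup_{K\in\A_a}\bet{L_K\phi(x)-L_K\phi(x_0)}\ =\ \sup_a\sup_{K\in\A_a}\left|\int_{\real^n}\left(\del^2_h\phi(x)-\del^2_h\phi(x_0)\right)K(h)\,dh\right|.
\]
Since $0\leq K(h)\leq G(h)$ for every admissible $K$, the whole strategy is to control the integrand by the envelope $G$.

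Fix $\epsilon>0$ and split the integral at radius $s$. For the inner piece over $B_s$ I would use the $C^{1,1}$ hypothesis: on a compact neighborhood $K_0\subset\Omega$ of $x_0$ there is a uniform constant $A$ and a radius $r_0$ such that $\bet{\del^2_h\phi(y)}\leq A\bet{h}^2$ for all $y\in K_0$ and $\bet{h}\leq r_0$. Hence for $s\leq r_0$ and $x\in K_0$,
\[
\left|\int_{B_s}(\del^2_h\phi(x)-\del^2_h\phi(x_0))K(h)\,dh\right|\ \leq\ 2A\int_{B_s}\bet{h}^2 G(h)\,dh,
\]
and the right-hand side is $<\epsilon/2$ once $s$ is chosen small enough, by the second hypothesis on $G$. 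This bound is uniform in $x\in K_0$ and in $K$.

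For the outer piece over $\real^n\setminus B_s$ (with $s$ now fixed), I would apply dominated convergence. The pointwise estimate $\bet{\del^2_h\phi(x)-\del^2_h\phi(x_0)}\leq 4\norm{\phi}_{L^\infty(\real^n)}$ combined with $K\leq G$ produces the integrable dominant $4\norm{\phi}_{L^\infty}G(h)\Ind{\real^n\setminus B_s}(h)$, which is in $L^1$ by the first hypothesis. As $x\to x_0$ one has $\del^2_h\phi(x)\to\del^2_h\phi(x_0)$ pointwise in $h$ by continuity of $\phi$, so for any individual $K$ the outer integral tends to $0$; but to get uniformity in $K$ I would dominate the inner integrand by $4\norm{\phi}_{L^\infty}G(h)\Ind{\real^n\setminus B_s}$ and the continuity-of-translations argument by
\[
\sup_{K\in\A_a,\,a}\left|\int_{\real^n\setminus B_s}(\del^2_h\phi(x)-\del^2_h\phi(x_0))K(h)\,dh\right|\leq \int_{\real^n\setminus B_s}\bet{\del^2_h\phi(x)-\del^2_h\phi(x_0)}G(h)\,dh,
\]
and then pass to the limit on the upper bound, which is independent of $a,K$.

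Combining the two estimates and taking $x$ sufficiently close to $x_0$ yields $|F(\phi,x)-F(\phi,x_0)|<\epsilon$. The main technical point to be careful about is the uniformity in $K\in\A_a$ and $a$, which is entirely handled by the envelope $G$ and the two integrability hypotheses on it; a secondary subtlety is the pointwise convergence $\del^2_h\phi(x)\to\del^2_h\phi(x_0)$ outside $\Omega$, where $\phi$ is only assumed bounded measurable. This is a standard translation-continuity issue: since the dominant $G(h)\Ind{\real^n\setminus B_s}$ is integrable, continuity of translation in $L^1(G\,dh)$ together with the fact that $\phi$ may be approximated in that norm by continuous functions resolves it without affecting the estimates above.
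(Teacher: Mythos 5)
Your decomposition of the integral into an inner ($B_s$) and outer ($\real^n\setminus B_s$) piece, the use of the local $C^{1,1}$ bound together with the second hypothesis on $G$ to kill the inner piece uniformly in $K$ and in $x$ near $x_0$, and the domination of every $K\in\A_a$ by the envelope $G$ are exactly the ingredients of the proof of \cite[Lemma~4.1]{CaSi-09RegularityIntegroDiff}, which the present paper cites without reproving. So the overall structure of your argument is correct and matches the intended one.

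The step you should firm up is the limit in the outer piece. You are right to flag that $\del^2_h\phi(x)\to\del^2_h\phi(x_0)$ pointwise a.e.\ in $h$ is \emph{not} available, since $\phi$ is only $L^\infty$ away from $\Omega$, but the replacement you offer --- ``continuity of translation in $L^1(G\,dh)$'' --- is not something one can quote: the weight $G\,dh$ is not translation invariant, so the usual $L^1$-translation continuity does not transfer to that weighted space. A clean way to close the gap is to write $\del^2_h\phi(x)-\del^2_h\phi(x_0)$ as $\big(\phi(x+h)-\phi(x_0+h)\big)+\big(\phi(x-h)-\phi(x_0-h)\big)-2\big(\phi(x)-\phi(x_0)\big)$, dispose of the last term by continuity of $\phi$ at $x_0\in\Omega$, and for the first term approximate $G\Ind{\real^n\setminus B_s}$ in $L^1(\real^n)$ by a nonnegative $G_\epsilon\in C_c(\real^n)$ and estimate
\[
\int_{\real^n\setminus B_s}\bet{\phi(x+h)-\phi(x_0+h)}\,G(h)\,dh\ \leq\ 2\norm{\phi}_{L^\infty}\norm{G\Ind{\real^n\setminus B_s}-G_\epsilon}_{L^1}\ +\ \norm{G_\epsilon}_{L^\infty}\int_{\Supp G_\epsilon}\bet{\phi(x+h)-\phi(x_0+h)}\,dh.
\]
The first summand is made small by the choice of $\epsilon$, uniformly in $x$; the second tends to zero as $x\to x_0$ by $L^1_{\mathrm{loc}}$-continuity of translations applied to the bounded (hence locally integrable) function $\phi$ on the compact set $\Supp G_\epsilon$. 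The term in $\phi(x-h)$ is handled identically. With this substitution in place of the vague translation-continuity appeal, your proof is complete.
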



\section{A Nonlocal Replacement For The Aleksandrov-Bakelman-Pucci Estimate}\label{sec:ABPIntegro}

Pointwise estimates, typically the ABP estimate, are the cornerstone of nonlinear elliptic regularity theory-- often they are one of the few places where the equation (\ref{ABP-Eq:USubsolution}) is used.  These estimates typically link the supremum of a subsolution with a $L^p$ norm of the right hand side.  However in the integro-differential setting, it is well known that such estimates are for the most part a completely open question.  When searching for regularity results such as \autoref{thm:holder}, the full strength of the ABP estimate is not necessary, and it suffices to work with a finite cube approximation of $\displaystyle \norm{f}_{L^n}$.  This was first presented in \cite{CaSi-09RegularityIntegroDiff}, and has been successfully modified for use in the non-symmetric as well as the parabolic settings (\cite{Chan-2012NonlocalDriftArxiv}, \cite{ChDa-2012NonsymKernels}, \cite{ChDa-2012RegNonlocalParabolicCalcVar} ).  In this section we provide the necessary modifications to the ABP replacement to treat the larger class of kernels, $\A_{sec}$.  

For this section we assume that $u$ is a subsolution of the equation:

\begin{equation}\label{ABP-Eq:USubsolution}
	\begin{cases}
		M^+_{\A_{sec}}u(x)\geq -f(x) &\text{in}\ \  B_1\\
		u\leq 0 &\text{on}\ \ \real^n\setminus B_1,
	\end{cases}
\end{equation}
where $f\geq0$ and $f\in C(\overline{B_1})$.
 Let $\Gamma:\R^n \to \R$ be the concave envelope of $u^+$ in $B_3$ defined by 
\begin{align}\label{eq:conc_env}
\Gamma(x) =\begin{cases} \inf\{p(x) : p \,\, \text{is affine and} \,\, p\geq u^+ \,\,\text{in $B_3$}\}, &x\in B_3 \\ 0 , &x\in \R^n\setminus B_3 \,.\end{cases} 
\end{align}
Define the contact set in $B_1$ by $\Sigma =\{u=\Gamma\}\cap B_1$.   

The next lemma is the key tool for obtaining the nonlocal replacement for the ABP estimate. It states that for all points in $\Sigma$, there is at least one dyadic ring in which $u$ separates sub-quadratically from $\Gam$ in a uniformly sized portion of the ring.  This is just the right amount of regularity to eventually show that $\grad\Gam$ maps a ball centered at $x$ in the contact set to a uniformly comparable ball in the set of super-differentials.  Eventually we conclude with the finite cube replacement for the ABP which appears as Theorem \ref{thm:cubecover}.

\begin{lem}\label{lem:rings}
Let $\rho_0\in(0,1)$ and $r_j=\rho_0 2^{-\frac{1}{2-\alpha}-j}$ for $j\in\N_0$. For $x\in\R^n$ define the rings $R_j(x)=B_{r_j}(x)\setminus B_{r_{j+1}}(x)$ and the subsets

\begin{align}
	R_j(x,V_\xi) &= \bigl\{z\in R_j(x):\, z-x\in V_\xi\bigr\}\\
	D_j(x) &= \{h\in\R^n:\,u(x+h)<u(x)+\ska{h,\nabla\Gamma(x)}- Ar_j^2\}.\label{ABPEq:Dj}
\end{align}

There exists a constant $C_0=C_0(n,\delta,\rho_0,\lambda) \geq 1 $ such that  for every $x\in\Sigma$ and $A>0$ there is an index $j\in\N_0$ and a conical set $V_\xi$  ($\xi$ depends on $x$) with the property
\begin{align}
\label{ABPEq:UniformNonSeparationRingMain}
\bet{R_j(x,V_\xi)\cap\{z\in\R^n:\, u(z)<u(x)+\ska{z-x, \nabla\Gamma(x)}-Ar_j^2\}}\leq C_0 \bet{R_j(x,V_\xi)} \frac{f(x)}{A}.
\end{align}
Here $\nabla\Gamma(x)$ is any element of the super-differential of $\Gam$ in $B_3$ at $x$. 
\end{lem}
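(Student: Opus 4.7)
The plan is to evaluate the subsolution inequality for $u$ classically at the contact point $x\in\Sigma$ using \autoref{prop:PointwiseEvaluation}, and then to extract the single-ring conclusion by a dyadic summation / contradiction argument against (\ref{ABPEq:UniformNonSeparationRingMain}) failing at every $j\in\N_0$.

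First I would set up the test function. Since $u\leq \Gamma$ on $B_3$ and $\Gamma$ is concave there, any super-gradient $\nabla\Gamma(x)$ yields an affine function $\ell(z):=u(x)+\ska{z-x,\nabla\Gamma(x)}$ satisfying $u\leq\ell$ on $B_3$ and $u(x)=\ell(x)$. Applying \autoref{prop:PointwiseEvaluation} with the truncations $\phi_r:=\ell$ on $B_r(x)$ and $\phi_r:=u$ outside (exactly as in its proof) gives $M^+_{\A_{sec}}u(x)\geq -f(x)$ classically on $u$, and \autoref{lem:FormulaForMPlus} then supplies a direction $\xi=\xi(x)\in\sphere^{n-1}$ with
$$(2-\al)\int_{\R^n}\bigl[\Lam(\del^2_h u(x))^+ - \lam \Indicator_{V_\xi}(h)(\del^2_h u(x))^-\bigr]\mu(dh)\geq -f(x).$$

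Second, I would verify $(\del^2_h u(x))^+\equiv 0$. For $\bet{h}\leq 2$ we have $x\pm h\in B_3$, so since $\ell$ is affine,
$$\del^2_h u(x)=[u(x+h)-\ell(x+h)]+[u(x-h)-\ell(x-h)]\leq 0.$$
For $\bet{h}>2$, both $x\pm h$ lie outside $B_1$, so $u(x\pm h)\leq 0$; combined with $u(x)=\Gamma(x)\geq 0$ this gives $\del^2_h u(x)\leq -2u(x)\leq 0$. The previous integral inequality thus reduces to
$$(2-\al)\lam\int_{V_\xi}(\del^2_h u(x))^-\mu(dh)\leq f(x).$$

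Third, I would decompose this integral over the dyadic rings $R_j(0)\cap V_\xi$. Because $r_0=\rho_0 2^{-1/(2-\al)}<1$, every $h$ in such a ring satisfies $\bet{h}<2$, so if additionally $h\in D_j(x)$ then $u(x+h)<\ell(x+h)-Ar_j^2$ together with $u(x-h)\leq \ell(x-h)$ gives $\del^2_h u(x)<-Ar_j^2$ and hence $(\del^2_h u(x))^->Ar_j^2$. Using $\bet{h}^{-n-\al}>r_j^{-n-\al}$ on $R_j(0)$ one obtains
$$\int_{V_\xi}(\del^2_h u(x))^-\mu(dh)\geq \sum_{j=0}^\infty A r_j^{2-n-\al}\bet{R_j(0)\cap V_\xi\cap D_j(x)}.$$
Now assume for contradiction that (\ref{ABPEq:UniformNonSeparationRingMain}) fails for every $j\in\N_0$. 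Combining the elementary volume bound $\bet{R_j(0)\cap V_\xi}\geq c(n,\del)r_j^n$ with the previous two displays yields
$$(2-\al)\lam\, C_0\, c(n,\del)\, f(x) \sum_{j=0}^\infty r_j^{2-\al} \leq f(x).$$
A direct calculation with $r_j=\rho_0 2^{-1/(2-\al)-j}$ gives $\sum_{j\geq 0}r_j^{2-\al}=\rho_0^{2-\al}/\bigl(2(1-2^{-(2-\al)})\bigr)$, and since $s/(1-2^{-s})$ is bounded below by a positive universal constant on $(0,2]$ and $\rho_0^{2-\al}\geq \rho_0^2$, the inequality forces an upper bound on $C_0$ depending only on $n,\del,\rho_0,\lam$; choosing $C_0$ strictly larger than that bound produces the contradiction.

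The principal obstacle is robustness as $\al\to 2^-$: the prefactor $(2-\al)$ coming from the ellipticity bound and the series $\sum_j r_j^{2-\al}\sim 1/(2-\al)$ are singular in opposite ways, and the scaling $r_j=\rho_0 2^{-1/(2-\al)-j}$ is engineered precisely so that these two singularities cancel, leaving a $C_0$ uniform up to $\al=2$. A secondary subtlety, absent in the case $V_\xi=\R^n$ of \cite{CaSi-09RegularityIntegroDiff}, is that the lower ellipticity bound for $K\in\A_{sec}$ holds only on the cone $V_\xi$; this is why the optimizing $\xi$ selected by \autoref{lem:FormulaForMPlus} must propagate through the whole argument, and why the conclusion is formulated on the restricted rings $R_j(x,V_\xi)$ rather than on full rings.
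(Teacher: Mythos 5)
Your proof is correct and follows essentially the same route as the paper: classical evaluation at the contact point via \autoref{prop:PointwiseEvaluation}, extraction of the optimizing direction $\xi$ via \autoref{lem:FormulaForMPlus}, the observation that $\del^2_h u(x)\leq 0$ everywhere, reduction to a lower bound on $\int_{V_\xi}(\del^2_h u(x))^-\mu(dh)$, dyadic ring decomposition, and a contradiction via summation with the robust $\akon$ cancellation. The only cosmetic difference is that you spell out the truncation $\phi_r$ used to justify the pointwise evaluation, whereas the paper invokes the supporting hyperplane directly.
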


\begin{bem}
Note that $\nabla\Gamma(x)=\nabla u(x)$ for $x\in \Sigma$ if $u$ is differentiable at $x$. 
\end{bem}

\begin{proof}[Proof of \autoref{lem:rings}]
Let $x\in\Sigma$.
Since $u$ can be touched by a supporting hyperplane $p$ from above at $x$, \autoref{prop:PointwiseEvaluation} implies that $M^+_{\A_{sec}} u(x)$ is defined classically and \autoref{lem:FormulaForMPlus} (\autoref{PrelimRem:MPlusFormulaUSC}) guarantees that there exists a sector, $V_\xi$ (depending on $x$) such that
\[M^+_{\A_{sec}} u(x)=(2-\alpha)\int_{\R^n}(\Lambda(\del^2_h u(x))^+-\lambda (\del^2_h u(x))^- \Indicator_{V_\xi}(h))\, \mu(dh)\geq -f(x).\]
Note that if both $x+h\in B_3$ and $x-h\in B_3$, then 
\[\delta^2_h u(x)=u(x+h)+u(x-h)-2u(x)\leq p(x+h)+p(x-h)-2p(x)=2p(x)-2p(x)=0 \,.\]
Moreover, if either $x+h\not\in B_3$ or $x-h\not\in B_3$, then $x+h \notin B_1$ and $x-h \notin B_1$. Thus $u(x+h)\leq 0$ and $u(x-h)\leq 0$. Therefore $\delta^2_h u(x)\leq0$ for $h \in \R^n$. Thus 
\begin{align*}
-f(x)\leq M^+_{\A_{sec}} u(x) \leq -(2-\alpha)\int_{B_{r_0}}\lambda \Indicator_{V_{\xi}}(h) (\del^2_h u(x))^-\, \mu(dh) \,.
\end{align*}
Recall $r_0=\rho_0 2^{-1/(2-\alpha)}$. Since $\bigcup\limits_{j=0}^\infty R_j(0,V_\xi)\subset B_{r_0}(0)$ and $R_j(0,V_\xi)\cap R_l(0,V_\xi)=\emptyset$ for $j\neq l$, we obtain from the inequality above
\begin{align}\label{Rings}f(x)\geq(2-\alpha)\lambda\sum_{j=0}^\infty\int_{R_j(0,V_\xi)} \Indicator_{V_\xi}(h) (\delta^2_h u(x))^-\, \mu(dh) \,.
\end{align}
We want to estimate the integrals appearing in \eqref{Rings}. First note that for $h \in B_1$ 
\[0\leq(\delta^2_h u(x))^-=-\delta^2_h u(x)=-[u(x+h)-u(x)-\ska{h,\nabla\Gamma(x)}]-[u(x-h)-u(x)+\ska{h,\nabla\Gamma(x)}] \,.\]
Note that the two terms in the brackets above are nonpositive because of the concavity of $\Gamma$. We use the argument from above to estimate each integral in \eqref{Rings}:
\begin{align*}
\int_{R_j(0,V_\xi)} \Indicator_{V_{\xi}}(h) (\delta^2_h u(x))^-\, \mu(dh) &\geq-\int_{R_j(0,V_\xi)} \Indicator_{V_\xi}(h) (u(x+h)-u(x)-\ska{h,\nabla\Gamma(x)})\, \mu(dh). 
\end{align*}

Let us assume that the assertion of (\ref{ABPEq:UniformNonSeparationRingMain}) fails for all rings, i.e. for every $j\in\N_0$
\begin{align}\label{eq:ring_widerspruch}
 \bet{R_j(x,V_\xi) \cap D_j(x)} > C_0 \bet{R_j(x,V_\xi)} \frac{f(x)}{A}   \,,
\end{align}
where $D_j(x)$ appears in (\ref{ABPEq:Dj}). 
Then
\begin{align*}
-\int_{R_j(0,V_\xi)}& \Indicator_{V_\xi}(h) (u(x+h)-u(x)-\ska{h,\nabla\Gamma(x)})\, \mu(dh)\\
&\geq -\int_{R_j(0,V_\xi) \cap D_j(x)} \Indicator_{V_\xi}(h)(u(x+h)-u(x)-\ska{h,\nabla\Gamma(x)})\, \mu(dh) \\
&\geq  A r_j^2 \frac{1}{r_j^{n+\alpha}} \bet{R_j(0,V_\xi) \cap D_j(x)} \geq   C_0 \bet{R_j(0,V_\xi)} \frac{A f(x)}{A r_j^{n+\alpha-2}}  
=c_1 C_0 f(x) r_j^{2-\alpha} ,  
\end{align*}

where $c_1 >0$ depends on $n$ and $\delta$. Therefore we obtain
\begin{align*}
f(x)& \geq c_1 (2-\alpha)\lambda \,C_0\, f(x)\sum_{j=0}^\infty r_j^{2-\alpha} \\
&=\frac{c_1}{2}\, \rho_0^{2-\alpha}\,(2-\alpha)\lambda\, C_0\, f(x) \,\sum_{j=0}^\infty (2^{-(2-\alpha)})^j \\
&\geq \frac{c_1}{2}C_0\, f(x) \lambda\rho_0^2 \frac{2-\alpha}{1-2^{-(2-\alpha)}} \geq c_2 \ C_0\, f(x),
\end{align*}
with a positive constant $c_2$ depending on $n,\delta,\rho_0$ and $\lambda$. Note that $c_2$ is independent of $\alpha$. By choosing $C_0$ large enough, we obtain a contradiction, and hence (\ref{ABPEq:UniformNonSeparationRingMain}) holds for at least one ring.
\end{proof}

The goal of the remainder of this section is to construct a specific covering of the contact set $\{u=\Gamma\}$ by a finite number of cubes. We need the following lemma which relates a bound of concave functions in a portion of the ball to an estimate in the whole ball.

\begin{lem}\label{lem:geo_concave}
Define $R= B_1\setminus B_{1/2}$ and $R(0,V_\xi)=R\intersect V_\xi$. There exists $l=l(n,\delta)\in (0,\tfrac12)$ and $\ep_0(n,\del)>0$ such that for every concave function $G:B_1\to\R$ and $b>0$ satisfying 
\[\bet{\{z\in R(0,V_\xi):G (z)<G (0)+ \langle z ,  \nabla G(0) \rangle -b\}}\leq \ep_0 \bet{R(0,V_\xi)},\] the inequality   
\[G(y)\geq G(0)+\langle y, \nabla G(0) \rangle -b\]
holds for every $y\in B_l$.
\end{lem}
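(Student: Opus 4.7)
The plan is to reduce the analytic statement to a purely geometric one about convex hulls of subsets of the bicone $V_\xi$.

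\textbf{Reduction.} Subtracting the supporting affine function $p(z)=G(0)+\langle z,\nabla G(0)\rangle$ from $G$, I may assume $G(0)=0$ and $\nabla G(0)=0$. Concavity then gives $G\le 0$ on $B_1$. The hypothesis becomes $|\{z\in R(0,V_\xi):G(z)<-b\}|\le\epsilon_0\,|R(0,V_\xi)|$ and the conclusion reduces to $G\ge -b$ on $B_l$. The key observation is that the superlevel set $C:=\{z\in B_1:G(z)\ge -b\}$ is convex (superlevel set of a concave function), contains $0$, and contains $S:=\{z\in R(0,V_\xi):G(z)\ge -b\}$. Hence it suffices to prove $\mathrm{conv}(\{0\}\cup S)\supset B_l$.

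\textbf{Geometric configuration.} I will fix once and for all, depending only on $n,\delta$, a finite collection of points $z_1,\dots,z_N\in V_\xi\cap\partial B_{3/4}$, strictly interior to $V_\xi$, such that (i) $\mathrm{conv}(z_1,\dots,z_N)\supset B_{2l}$ for some $l=l(n,\delta)>0$, and (ii) $B_{r_0}(z_i)\subset R(0,V_\xi)$ for some $r_0=r_0(n,\delta)>0$ and every $i$. The construction exploits the bicone structure $V_\xi=\{|\langle z/|z|,\xi\rangle|\ge\delta\}$: for any unit $\eta\perp\xi$ and any $\delta'\in(\delta,1)$ the midpoint of the two points $\tfrac{3}{4}(\delta'\xi\pm\sqrt{1-\delta'^2}\eta)$, \emph{both lying in $V_\xi$}, equals $\tfrac{3}{4}\sqrt{1-\delta'^2}\eta\in\xi^\perp$. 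Letting $\eta$ range over an orthonormal basis of $\xi^\perp$ and also including $\pm\tfrac{3}{4}\xi\in V_\xi$ exhibits a cross-polytope inside $\mathrm{conv}(V_\xi\cap\partial B_{3/4})$ with one semi-axis $\tfrac{3}{4}$ along $\pm\xi$ and $n-1$ perpendicular semi-axes of length $\tfrac{3}{4}\sqrt{1-\delta'^2}$; its inscribed ball has positive radius $2l(n,\delta)$. Taking $\delta'>\delta$ makes the $z_i$ strictly interior, giving (ii). Moreover, since $0$ lies in the \emph{interior} of $\mathrm{conv}(z_1,\dots,z_N)$, a routine continuity argument for convex hulls shows that, shrinking $r_0$ if needed, any perturbation $z_i'\in B_{r_0}(z_i)$ still satisfies $\mathrm{conv}(z_1',\dots,z_N')\supset B_l$.

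\textbf{Pigeonhole.} Choose $\epsilon_0:=|B_{r_0}|/(2|R(0,V_\xi)|)>0$, depending only on $n,\delta$. The measure hypothesis then yields
\[
|R(0,V_\xi)\setminus S|\le\epsilon_0\,|R(0,V_\xi)|<|B_{r_0}(z_i)|=|B_{r_0}(z_i)\cap R(0,V_\xi)|
\]
for each $i$, so $B_{r_0}(z_i)\cap S\neq\emptyset$; pick $z_i'$ in that intersection. By the stability in the previous paragraph, $B_l\subset\mathrm{conv}(z_1',\dots,z_N')\subset\mathrm{conv}(S)\subset C$, whence $G\ge -b$ on $B_l$, as required.

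\textbf{Main obstacle.} The delicate point is the geometric claim in the second paragraph: even when $\delta$ is close to $1$ and each of the two cones in $V_\xi$ is very thin, one must still inscribe a ball of $(n,\delta)$-universal radius in $\mathrm{conv}(V_\xi\cap\partial B_{3/4})$. This uses essentially that $V_\xi$ is a bicone rather than a single cone --- for a single cone the convex hull lies in a half-space and no ball about $0$ could fit, and the entire strategy would collapse. Once this, together with the perturbation continuity, is established, the convexity of the superlevel set and the pigeonhole bookkeeping are standard.
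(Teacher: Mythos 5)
Your proposal is correct and closely parallel in spirit to the paper's argument, but your packaging is genuinely tighter. The paper first proves the pointwise bound $G(y)\geq G(0)+\langle y,\nabla G(0)\rangle-b$ for \emph{every} $y\in B_{1/2}\cap\tfrac{1}{2}V_\xi$ by finding, for each such $y$, a pair $y_1,y_2\in R(0,\tfrac12 V_\xi)$ with $y=\tfrac12(y_1+y_2)$ and $B_{l_0}(y_i)\subset R(0,V_\xi)$, applying the measure hypothesis to find ``good'' points $z_i\in B_{l_0}(y_i)$ with $y=\tfrac12(z_1+z_2)$, and averaging by concavity; it then passes to the convex hull of $B_{1/2}\cap\tfrac{1}{2}V_\xi$. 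You instead fix once and for all a finite cross-polytope $\{z_i\}$ of anchor points inside $R(0,V_\xi)$, observe that convexity of the superlevel set $C=\{G\ge -b\}$ reduces the problem to a purely geometric hull-containment, and use the measure hypothesis only to perturb each anchor to a good point $z_i'\in C$. This skips the intermediate ``for every $y$ in the half-sector'' step and replaces the pair-finding with a one-shot Hausdorff-stability argument for $\mathrm{conv}(z_1',\dots,z_N')$; what each approach buys is roughly: the paper's is more self-contained pointwise, yours makes the role of the bicone geometry (that $\mathrm{conv}(V_\xi\cap\partial B_{3/4})$ contains a full ball about $0$) completely explicit and needs only a finite configuration. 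Both rest on exactly the same two ingredients --- convexity of $C$ and a pigeonhole on the ring --- so this is a variant rather than a fundamentally different method.

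One small typo in your geometric construction: the pair of points whose midpoint lands in $\xi^\perp$ should be $\tfrac{3}{4}\bigl(\pm\delta'\xi+\sqrt{1-\delta'^2}\,\eta\bigr)$ (sign on the $\xi$-component), not $\tfrac{3}{4}\bigl(\delta'\xi\pm\sqrt{1-\delta'^2}\,\eta\bigr)$; as written the midpoint would be $\tfrac34\delta'\xi$, not $\tfrac34\sqrt{1-\delta'^2}\,\eta$. With the sign corrected the two points do lie in the two opposite cones of the bicone and the midpoint is as you claim, so this is immaterial to the argument. Your ``Main obstacle'' remark is accurate: both proofs genuinely require $V_\xi$ to be a \emph{bi}cone, since a one-sided cone has convex hull in a half-space through $0$ and no ball about $0$ could be inscribed.
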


\begin{bem}
Note that the assertion of this result is weaker than the corresponding one of \cite[Lemma 8.4]{CaSi-09RegularityIntegroDiff}. The uniform estimate only capturing $y\in B_l$ is due to the geometric restriction imposed by the set $V_\xi$.  Basically $B_l$ is the largest sized ball contained in the convex hull of $\frac{1}{2}V_\xi\intersect B_{1/2}$.  See (\ref{PrelimNotationEq:HalfSector}) for $\frac{1}{2}V_\xi$.
\end{bem}

\begin{proof}

We will prove this estimate in two steps.  First we will show that in half of the sector, $ \frac{1}{2}V_\xi$ (see (\ref{PrelimNotationEq:HalfSector})), the uniform estimate holds in $B_{1/2}$.  Second we note that $l$ can be the radius of the largest ball centered at $0$ such that $ B_l\subset\textnormal{hull}(B_{1/2}\intersect \frac{1}{2}V_\xi)$.

We let $y\in B_{1/2}\intersect \frac{1}{2}V_\xi$ be generic. 
Choose $l_0\in (0,\tfrac12 )$ sufficiently small such that one can find two points $y_1$ and $y_2$ in $R(0,\frac{1}{2}V_\xi)$ such that \[y=(y_1+y_2)/2\] and both $B_{l_0}(y_1)$ and $B_{l_0}(y_2)$, are contained in $R(0,V_\xi)$.
We claim that $\ep_0$ can be chosen small enough, such that for every $y\in B_{1/2}\intersect\frac{1}{2}V_\xi$, $y_1$ and $y_2$ as above, and every $b>0$ satisfying 
\begin{align}\label{eq:GPunkt}
\bet{\{z\in R(0,V_\xi): G(z)<G (0)+\ska{z,\nabla G(0)}-b\}}\leq \ep_0 \bet{R(0,V_\xi)},
\end{align}
there will be two points $z_1\in B_{l_0}(y_1)$ and $z_2\in B_{l_0}(y_2)$ such that 
\begin{enumerate}[(i)]
 \item $y=(z_1+z_2)/2$, 
\item $G(z_1)\geq G(0)+\ska{z_1,\nabla G(0)}-b$, \,\, and
\item $G(z_2)\geq G(0)+\ska{z_2,\nabla G(0)}-b$.
\end{enumerate}
We prove the claim as follows: Choose $\ep_0$ sufficiently small such that 
\[\ep_0 \bet{R(0,V_\xi)}<\frac{\bet{B_{l_0}}}{2}.\] 
Let $y\in B_{1/2}\intersect R(0,V_\xi)$ and $y_1, y_2 \in R(0,V_\xi)$ be arbitrary with $y=(y_1+y_2)/2$.  We define
\begin{align*}D_1&=\{z_1\in B_{l_0}(y_1): G(z_1)\geq G(0)+\ska{z_1,\nabla G(0)}-b\}\subset R(0,V_\xi)\, ,\\ D_2&=\{z_2\in B_{l_0}(y_2): G(z_2)\geq G(0)+\ska{z_2,\nabla G(0)}-b\}\subset R(0,V_\xi).\end{align*}
Using \eqref{eq:GPunkt} and the choice of $\ep_0$ from above, we obtain $\bet{D_1}>\frac{\bet{B_{l_0}}}{2}$ and $\bet{D_2}>\frac{\bet{B_{l_0}}}{2}$. \\
It is clear that for every point $z_1\in B_{l_0}(y_1)$ there exists a point $z_2\in B_{l_0}(y_2)$ such that $y=\frac{z_1+z_2}{2}$. 
We want to find points $z_1\in D_1$ and $z_2\in D_2$ such that $y=\frac{z_1+z_2}{2}$. Let us assume that this is not possible. Hence, for every $z_1\in D_1$ we find a point $z_2\in B_{l_0}(y_2)\setminus D_2$ such that $y=\frac{z_1+z_2}{2}$. This implies that
\[\bet{B_{l_0}(y_2)\setminus D_2}\geq \bet{D_1}>\frac{\bet{B_{l_0}}}{2}.\]
This is a contradiction to the fact that $\bet{D_2}>\frac{\bet{B_{l_0}}}{2}$. This proves our claim.\\
For $z_1\in B_{l_0}(y_1)$ and $z_2\in B_{l_0}(y_2)$ satisfying (i)-(iii) we finally have 
\begin{align*}
G (y)&=G\left(\frac{z_1+z_2}{2}\right)\geq\frac{1}{2}G(z_1)+\frac{1}{2}G(z_2) \\
& \geq G (0)+\frac{1}{2}\ska{z_1+z_2,\nabla G(0)}-b = G(0)+\ska{y,\nabla G(0)}-b. 
\end{align*}

Now to conclude the second step, we simply remark that by concavity the bound must hold for all $y$ in the convex hull of $B_{1/2}\intersect \frac{1}{2}V_\xi$.  Thus taking $l$ to be the radius of the largest ball contained in the convex hull, we have the estimate for the decay of $G$ for all $y\in B_l$.
\end{proof}

By a simple scaling argument we get \autoref{lem:geo_concave} for every ball:
\begin{cor}
\label{Lem8.4anyball}
For $x\in\R^n$ and $r>0$ define $R_r(x)= B_r(x)\setminus B_{r/2}(x)$ and the subset \[R_r(x,V_\xi)=\bigl\{y\in\,R_r(x):\, (y-x)\in  V_\xi\bigr\}.\] 

For every concave function $G: B_r(x)\to\R$ and $b>0$ satisfying 
\begin{align}\label{Lem8.4ball}\bet{\{z\in R_r(x,V_\xi):G(z)<G (x)+\ska{z-x,\nabla G(x)}-b\}}\leq \ep_0\bet{R_r(x,V_\xi)},\end{align} the inequality   
\[G(y)\geq G(x)+\ska{y-x,\nabla G(x)}-b\]
holds for every $y\in B_{l r}(x)$, where $\ep_0$ and $l$ are as in \autoref{lem:geo_concave}.
\end{cor}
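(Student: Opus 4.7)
The plan is to reduce the general ball $B_r(x)$ to the unit ball $B_1(0)$ by an affine change of variables, and then directly invoke \autoref{lem:geo_concave}. Let me set up the scaling carefully.

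First I would define the rescaled function $\tilde G : B_1(0) \to \R$ by $\tilde G(w) = G(x + rw)$. Since $G$ is concave on $B_r(x)$ and the map $w \mapsto x + rw$ is affine, $\tilde G$ is concave on $B_1(0)$. A straightforward computation gives $\tilde G(0) = G(x)$ and, for any element of the superdifferential, $\nabla \tilde G(0) = r \nabla G(x)$. This step is essentially bookkeeping and should cause no difficulty.

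Next I would translate the smallness hypothesis to $\tilde G$. The substitution $z = x + rw$ maps $R(0, V_\xi) = (B_1 \setminus B_{1/2}) \cap V_\xi$ bijectively onto $R_r(x, V_\xi)$ --- this uses that $V_\xi$ is a cone, so $w \in V_\xi$ iff $rw \in V_\xi$ --- with Jacobian $r^n$. Under this substitution the inequality $G(z) < G(x) + \langle z - x, \nabla G(x)\rangle - b$ becomes $\tilde G(w) < \tilde G(0) + \langle w, \nabla \tilde G(0)\rangle - b$, so the hypothesis (\ref{Lem8.4ball}) transforms exactly into
\[
\bet{\{w \in R(0, V_\xi) : \tilde G(w) < \tilde G(0) + \ska{w, \nabla \tilde G(0)} - b\}} \leq \ep_0 \bet{R(0, V_\xi)}.
\]
Here the factors of $r^n$ on both sides cancel, so the $b$ on the right-hand side is the same $b$ as in the original statement (no rescaling of $b$ is needed).

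Now I would apply \autoref{lem:geo_concave} to the concave function $\tilde G$ with the same constants $\ep_0$ and $l$, which yields $\tilde G(w) \geq \tilde G(0) + \ska{w, \nabla \tilde G(0)} - b$ for every $w \in B_l(0)$. Undoing the change of variables by setting $w = (y - x)/r$, the condition $w \in B_l(0)$ becomes $y \in B_{lr}(x)$, and the inequality reads $G(y) \geq G(x) + \ska{y - x, \nabla G(x)} - b$, which is the desired conclusion. There is no real obstacle here --- the only thing to be careful about is that $V_\xi$ is a cone (invariant under positive dilation) so the sector structure is preserved by the rescaling, and that the right-hand side of the smallness condition scales in exactly the same way as the left-hand side, so $b$ need not be adjusted.
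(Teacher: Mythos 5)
Your proposal is correct and is precisely the ``simple scaling argument'' the paper invokes without spelling out: rescale by $\tilde G(w) = G(x+rw)$, observe that $V_\xi$ is a cone and that the $r^n$ Jacobian factors cancel so $b$ is unchanged, apply \autoref{lem:geo_concave}, and undo the change of variables.
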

\autoref{Lem8.4anyball} and \autoref{lem:rings} lead to the following result. The proof is obtained in the same way as in \cite[Corollary 8.5]{CaSi-09RegularityIntegroDiff}:

\begin{cor}
\label{Cor8.5}
Let $\rho_0\in(0,1)$ be arbitrary and $\ep_0$, $l$ be as in \autoref{Lem8.4anyball}. There exists a constant $C_1=C_1(n,\delta,\rho_0,\lambda)\geq 1$ and for every $x\in\Sigma$ there is radius $r\in(0,\rho_0 2^{-1/(2-\alpha)})$ and a sector, $V_\xi$, (both depending on $x$) such that 
\begin{align}
\label{8.5.1}
\frac{\bet{\{y\in R_{r}(x,V_\xi):u(y)<u(x)+\ska{y-x,\nabla\Gamma(x)}-C_1f(x)(lr)^2\}}}{\bet{R_{r}(x,V_\xi)}} \leq \ep_0
\end{align}
and
\begin{align}
\label{8.5.2}
\bet{\nabla\Gamma(B_{lr/2}(x))}\leq (8 C_1)^n f(x)^n\bet{B_{lr/2}(x)} \,,
\end{align}
where $R_{r}(x,V_\xi)$ is defined as in \autoref{Lem8.4anyball}. 
\end{cor}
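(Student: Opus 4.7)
The plan is to combine \autoref{lem:rings} (which controls how sharply $u$ separates from the supporting plane inside some dyadic ring) with \autoref{Lem8.4anyball} (which, for concave functions, upgrades such separation control from a ring into a uniform sub-quadratic lower bound on the nearby ball). First, fix $x\in\Sigma$ and invoke \autoref{lem:rings} with the free parameter chosen as $A=C_0 f(x)/\ep_0$. This yields an index $j\in\N_0$, a sector $V_\xi$ (depending on $x$), and a radius $r:=r_j=\rho_0 2^{-1/(2-\alpha)-j}\in(0,\rho_0 2^{-1/(2-\alpha)})$ for which
\[
\bigl|R_j(x,V_\xi)\cap\{u(z)<u(x)+\ska{z-x,\nabla\Gamma(x)}-Ar^2\}\bigr|\leq\ep_0\,|R_j(x,V_\xi)|.
\]
Since $A r^2=(C_0/\ep_0)f(x)r^2$, defining $C_1:=C_0/(\ep_0 l^2)$ rewrites the threshold as $C_1 f(x)(lr)^2$, and since $R_j(x,V_\xi)=R_r(x,V_\xi)$ in the notation of \autoref{Lem8.4anyball}, this is exactly (\ref{8.5.1}).

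Next, I would pass from $u$ to $\Gamma$. Since $u\leq\Gamma$ on $B_3$ and $u(x)=\Gamma(x)$ (as $x\in\Sigma$), one has the inclusion
\[
\{z\in B_3:\Gamma(z)<\Gamma(x)+\ska{z-x,\nabla\Gamma(x)}-b\}\subset\{z\in B_3:u(z)<u(x)+\ska{z-x,\nabla\Gamma(x)}-b\}
\]
for any $b>0$, and in particular for $b=C_1 f(x)(lr)^2$. Because $x\in B_1$ and $r<1$, the ball $B_r(x)\subset B_3$, so $\Gamma$ is concave on $B_r(x)$ and \autoref{Lem8.4anyball} applies to $G=\Gamma$ with this $b$ and the sector $V_\xi$ produced above. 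The conclusion is
\[
\Gamma(y)\geq\Gamma(x)+\ska{y-x,\nabla\Gamma(x)}-C_1 f(x)(lr)^2\quad\text{for every }y\in B_{lr}(x).
\]

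For (\ref{8.5.2}) I would now run the standard supporting-hyperplane argument on $\Gamma$. Pick any $y\in B_{lr/2}(x)$ and any $p\in\partial\Gamma(y)$ (super-differential in $B_3$). For each unit vector $e$, the point $z=y+\tfrac{lr}{2}e$ lies in $B_{lr}(x)$, so concavity gives $\Gamma(z)\leq\Gamma(y)+\tfrac{lr}{2}\ska{e,p}$, while the supporting plane at $x$ gives $\Gamma(y)\leq\Gamma(x)+\ska{y-x,\nabla\Gamma(x)}$, and the lower bound from the previous step gives $\Gamma(z)\geq\Gamma(x)+\ska{z-x,\nabla\Gamma(x)}-b$. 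Combining these three inequalities,
\[
\tfrac{lr}{2}\ska{e,\,\nabla\Gamma(x)-p}\leq b = C_1 f(x)(lr)^2,
\]
and maximizing over $e$ yields $|\nabla\Gamma(x)-p|\leq 2C_1 f(x)\,lr$. Hence $\nabla\Gamma(B_{lr/2}(x))\subset B_{2C_1 f(x)lr}(\nabla\Gamma(x))$, and taking $n$-dimensional Lebesgue measure gives
\[
|\nabla\Gamma(B_{lr/2}(x))|\leq\bigl(4C_1 f(x)\bigr)^n\,|B_{lr/2}(x)|\leq (8C_1)^n f(x)^n |B_{lr/2}(x)|,
\]
which is (\ref{8.5.2}).

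The argument is essentially a bookkeeping exercise once \autoref{lem:rings} and \autoref{Lem8.4anyball} are in hand; the only place where care is genuinely required is in step one, namely that the \emph{same} sector $V_\xi$ selected by \autoref{lem:rings} must be used when invoking \autoref{Lem8.4anyball}. This is the only point where the anisotropy of $\A_{sec}$ enters the argument, and it is precisely why both lemmas had to be proved relative to a variable sector rather than all of $\R^n$. Everything else—the transfer from $u$ to $\Gamma$, and the standard super-differential volume bound—proceeds exactly as in \cite[Corollary~8.5]{CaSi-09RegularityIntegroDiff}.
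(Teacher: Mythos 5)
Your proof is correct and follows essentially the same route as the paper: choose $A = C_0 f(x)/\ep_0$ in \autoref{lem:rings} to get (\ref{8.5.1}), transfer from $u$ to $\Gamma$ via the sublevel-set inclusion, and apply \autoref{Lem8.4anyball} with the same sector $V_\xi$ to get the two-sided quadratic bound on $\Gamma$ near $x$. The only difference is that the paper packages the final volume estimate (\ref{8.5.2}) into \autoref{lem:techstuff_conc}(ii), whereas you carry out the equivalent supporting-hyperplane calculation inline (incidentally obtaining the slightly sharper constant $(4C_1)^n$ in place of $(8C_1)^n$).
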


\begin{proof}
Let $x\in\Sigma$ be fixed.
Because of Lemma \ref{lem:rings} there is a constant $C_0=C_0(n,\delta,\rho_0,\lambda)\geq 1$ and for every $A>0$ there exists some $r\in(0,\rho_0 2^{-1/(2-\alpha)})$ and a sector $V_\xi$ such that
\[\bet{\{y\in R_{r}(x,V_\xi):\,u(y)<u(x)+\ska{y-x,\nabla\Gamma(x)}-Ar^2\}}\leq C_0\,\tfrac{f(x)}{A}\bet{R_{r}(x,V_\xi)}.\]
By choosing $A=\frac{C_0 f(x)}{\ep_0}$ we obtain \eqref{8.5.1}, where $C_1=\frac{C_0}{\ep_0 l^2}$. 

Now let us prove \eqref{8.5.2}. First note that for every $b>0$ the set $ \{y \in \R^n:\,\Gamma(y)<\Gamma(x)+\ska{y-x,\nabla\Gamma(x)}-b\}$ is a subset of $\{y \in \R^n:\,u(y)<u(x)+\ska{y-x,\nabla\Gamma(x)}-b\}$. Using this relation and \eqref{8.5.1} we conclude that there is a constant $C_1=C_1(n,\delta,\rho_0,\lambda)\geq 1$ and some $r\in(0,\rho_0 2^{-1/(2-\alpha)})$ such that
\begin{align} \label{hilfsab8.3}
\frac{\bet{\{y\in R_{r}(x,V_\xi):\,\Gamma(y)<\Gamma(x)+\ska{y-x,\nabla\Gamma(x)}-C_1 f(x) (lr)^2\}}}{\bet{R_{r}(x,V_\xi)}}\leq \ep_0.
\end{align}
Because of the concavity of $\Gamma$ and \eqref{hilfsab8.3}, we may apply \autoref{Lem8.4anyball} for $G=\Gamma$ and $b=C_1 f(x) (lr)^2$. We obtain 
\[\Gamma(y)\geq\Gamma(x)+\ska{y-x,\nabla\Gamma(x)}-C_1f(x)(lr)^2\]
for every $y\in B_{lr}(x)$. At the same time,
\[\Gamma(y)\leq\Gamma(x)+\ska{y-x,\nabla\Gamma(x)}\]
for every $y\in B_{lr}(x)$ because of the concavity of $\Gamma$. Hence,
\begin{align}\label{impest}
\bet{\Gamma(y)-\Gamma(x)-\ska{y-x,\nabla\Gamma(x)}}\leq C_1 f(x)(lr)^2\,\,\text{for every } y\in B_{lr}(x) \,.
\end{align} 
Recall that $f$ is a positive function. \autoref{lem:techstuff_conc}(ii) -- presented below -- completes the proof.
\end{proof}

\begin{lem}\label{lem:techstuff_conc}
(i) Let $G : B_R \to \R$ be a concave function. Then
\begin{align}\label{LokLip2}
\sup_{y\in B_{R/2}}\bet{\nabla G(y)}\leq\frac{4}{R}\sup_{y\in B_R}\bet{G(y)} \,.
\end{align}
(ii) Let $G : B_R \to \R$ be a concave function satisfying 
\begin{align}\label{impest2}
\bet{G(y)- G(0)-\ska{y,\nabla G(0)}}\leq K R^2\,
\end{align} 
for every $y\in B_{R}$ with some $K > 0$. Then
\begin{align}\label{techstuff-assert}
\bet{\nabla G(B_{R/2})}\leq (8K)^n \bet{B_{R/2}} \,.
\end{align}
\end{lem}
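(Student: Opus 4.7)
Part (i) is a standard Lipschitz-type consequence of concavity, and part (ii) will follow by applying the same idea to a translate of $G$.

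For (i), I would fix $y \in B_{R/2}$ and a super-gradient $p = \nabla G(y)$, and use the concavity inequality $G(z) \leq G(y) + \langle p, z-y\rangle$ for all $z \in B_R$. Choosing $z = y + \tfrac{R}{2}\tfrac{p}{|p|}$ (which lies in $B_R$ because $|y|<R/2$), rearranging gives $\tfrac{R}{2}|p| \leq G(y) - G(z) \leq 2\sup_{B_R}|G|$, i.e.\ $|p|\leq \tfrac{4}{R}\sup_{B_R}|G|$. Taking the supremum over $y$ completes (i).

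For (ii), I would introduce the auxiliary concave function $H(y) = G(y) - G(0) - \langle y,\nabla G(0)\rangle$, so that $H(0)=0$, $0 \in \nabla H(0)$, and the hypothesis \eqref{impest2} becomes $|H(y)|\leq KR^2$ on $B_R$. Note that $\nabla H(y) = \nabla G(y) - \nabla G(0)$, so it suffices to prove $\nabla H(B_{R/2})\subset B_{4KR}$. For $y \in B_{R/2}$ and $p \in \nabla H(y)$, the concavity inequality $H(z)\leq H(y)+\langle p,z-y\rangle$ applied at $z = y - \tfrac{R}{2}\tfrac{p}{|p|}\in B_R$ yields
\[
\tfrac{R}{2}|p| \;\leq\; H(y)-H(z) \;\leq\; 2KR^2,
\]
so $|p|\leq 4KR$. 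Hence $\nabla G(B_{R/2})\subset B_{4KR}(\nabla G(0))$, which gives
\[
\bet{\nabla G(B_{R/2})}\leq \bet{B_{4KR}} = (4KR)^n\omega_n = (8K)^n\bet{B_{R/2}}.
\]

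Neither step is really an obstacle; the only small subtlety is handling possible non-differentiability of $G$, which is why I phrase the arguments in terms of super-gradients and the concavity inequality rather than $\nabla G$ directly. This also makes clear that $\nabla G(B_{R/2})$ is to be understood as the union of super-differentials over $B_{R/2}$, consistent with its use in \autoref{Cor8.5}.
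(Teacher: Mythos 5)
Your proof is correct and takes essentially the same route as the paper: establish the Lipschitz bound for a concave function via a supporting-hyperplane inequality, then apply it to the shifted function $\hat G(y)=G(y)-G(0)-\ska{y,\nabla G(0)}$ to deduce (ii). One small sign slip in (i): with $z=y+\tfrac{R}{2}\tfrac{p}{|p|}$ concavity gives $G(z)-G(y)\leq\tfrac{R}{2}|p|$, which is the wrong direction; you want $z=y-\tfrac{R}{2}\tfrac{p}{|p|}$ (exactly as you do in part (ii)), which yields $\tfrac{R}{2}|p|\leq G(y)-G(z)\leq 2\sup_{B_R}|G|$, and you may assume $p\neq0$ when dividing by $|p|$.
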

\begin{proof}
(i) It is sufficient to prove \eqref{LokLip2} for $R=1$. Set $M=\sup_{y\in B_1}|G(y)|$. 
Let $y\in B_{1/2}$. Given $h\neq 0$, choose $s<0<t$ such that $\bet{y+sh}=\bet{y+th}=1$. Then
\[-M\leq G(y+sh)\leq G(y)+\ska{sh,\nabla G(y)}\leq M + \ska{sh,\nabla G(y)} \text{ and } \bet{s h} \geq \tfrac{1}{2} \,.\]
The same estimates hold when $s$ is replaced by $t$. Therefore we obtain
\[\ska{\nabla G(y), h}\leq -\frac{2M}{s} \leq 4M \bet{h} \;\text{ and }\; \ska{\nabla G(y), h}\geq -\frac{2M}{t}\geq- 4M \bet{h} .\]
As a consequence we deduce for every $h\neq 0$ the estimate $\frac{\bet{\ska{\nabla G(y), h}}}{\bet{h}}\leq 4M$. Hence we obtain $\bet{\nabla G(y)}\leq 4M$, which finishes the proof of \eqref{LokLip2}.

(ii) For $y\in B_{R}$ define $\widehat{G}(y)=G(y)-G(0)-\ska{y,\nabla G(0)}$. $\widehat{G}$ is a concave function in $B_{R}$. Let $z \in B_{R/2}$. Using \eqref{LokLip2} and \eqref{impest2} we obtain 
\begin{align*}
\bet{\nabla G(z)-\nabla G(0)} & = \bet{\nabla \widehat{G}(z)} \leq \frac{4}{R}\sup_{y\in B_R}
\bet{G(y)-G(0)-\ska{y,\nabla G(0)}} \leq 4 K R = 8K \tfrac{R}{2}\,.
\end{align*}
Therefore 
\[\nabla G(B_{R/2})\subset B_{8K (R/2)} (\nabla G(0)) \; \text{ and } \; \bet{\nabla G(B_{R/2})}\leq (8K)^n \bet{B_{R/2}}. \qedhere\]

\end{proof}

As a consequence of \autoref{Cor8.5} we derive a theorem which can be considered as a nonlocal finite cube substitute for the classical ABP estimate, cf. \cite[Theorem 3.2]{CaCa-95} and \autoref{cor:nonlocalABP}. 
\begin{sa}
\label{thm:cubecover}
Let $l\in(0,\tfrac12 )$ be as in \autoref{Cor8.5} and assume $0 < \rho_0 \leq \frac{l}{16n}$. There are constants $C_2=C_2(\delta,\lambda,\rho_0,n) \geq 1$ and $\nu=\nu(\delta,n)>0$ and a disjoint family of open cubes $(Q^j)_{j=1,\ldots,m}$, $m\in\N$, with diameters $0<d_j\leq \rho_02^{-1/(2-\alpha)}$ which covers the contact set $\Sigma=\{u=\Gamma\} \cap B_1$ such that the following properties hold for every $j=1,\ldots,m$:
\begin{enumerate}
\item $\displaystyle\Sigma\cap \overline{Q^j}\neq\emptyset$.
\item $\displaystyle\bet{\nabla\Gamma(\overline{Q^j})}\leq C_2(\sup_{\overline{Q^j}}f)^n\bet{Q^j}.$
\item $\displaystyle\bigl|\{y\in \eta Q^j: u(y)\geq\Gamma(y)-C_2(\sup_{\overline{Q^j}}f)d_j^2\}\bigr|\geq \nu\bet{\eta Q^j}$, where $\eta=(1+\frac{8}{l})\sqrt{n}$.
\end{enumerate} 
\end{sa}

\begin{proof}
The proof follows the one of \cite[Theorem 8.7]{CaSi-09RegularityIntegroDiff}. In our context, the main constants additionally depend on $\delta$. Let $C_1=C_1(n,\delta,\rho_0,\lambda)\geq 1$ be as in \autoref{Cor8.5}. Set $c_1 = (8 C_1)^n$ and $c_2=16 C_1$. We prove the assertion of the theorem with $C_2=c_1 \eta^n$ and $\nu=(1-l) \tfrac{\bet{R(0,V_\xi)}}{\bet{B_1}} (8\sqrt{n})^{-n}$, where $R(0,V_\xi)$ is as in \autoref{lem:geo_concave}. 

Let $\mathcal{Q}_1$ be a finite disjoint family of open cubes $Q$ with diameter $d_1 = \rho_0 2^{-1/(2-\alpha)}$ and the property $B_1 \subset \bigcup\limits_{\mathcal{Q}_1} \overline{Q}$. Let $\mathcal{Q}'_1 \subset \mathcal{Q}_1$ be the subfamily of all cubes $Q$ with $\overline{Q} \cap \Sigma \ne \emptyset$. We decompose every cube in $\mathcal{Q}'_1$ which does not satisfy both conditions (2) and (3) from above into $2^n$ sub-cubes with half diameter. Now, let $\mathcal{Q}_2$ be the family of these newly created sub-cubes plus those cubes from $\mathcal{Q}'_1$ that do satisfy both conditions (2) and (3) from above (and hence were not decomposed). We repeat this procedure and obtain a sequence of families 
\[ \mathcal{Q}_1, \mathcal{Q}_2, \mathcal{Q}_3, \ldots \]
We claim that there is an index $k \in \N$ with $\mathcal{Q}_k = \mathcal{Q}_{k+i}$ for all $i \in \N$. In this case, we set $m = \# \mathcal{Q}_k$. 

Let us assume that no such index $k \in \N$ exists. Then there exists a sequence of cubes $Q^j$ with diameter $d_j$ such that $d_j = 2^{-j+1} d_1$ and for every $j\in\N$ the following conditions hold:
\begin{itemize}
 \item $\displaystyle Q^j\supset Q^{j+1}$.
\item $\displaystyle\overline{Q^j}\cap\Sigma\neq\emptyset$.
\item $Q^j$ violates $(2)$ or $(3)$.
\end{itemize}
Let $x_0 \in \R^n$ satisfy $\{x_0\} = \bigcap\limits_{j \in \N} \overline{Q^j}$. Firstly, we claim $x_0 \in \Sigma=\{u=\Gamma\} \cap B_1$. It is sufficient to prove $x_0\in \{u=\Gamma\}$. Note that there is a sequence $(x_j)_{j\in\N}$ with $x_0 = \lim\limits_{j \to \infty} x_j$ and $x_j \in\overline{Q^j}\cap\Sigma$ for every $j \in \N$, and hence $x_0\in\Sigma$ since $\Sigma$ is closed.

We now derive a contradiction by showing that one of the cubes $Q^j$ from above satisfies $(2)$ and $(3)$. Using \autoref{Cor8.5}, there is a number $r$ with $0<r<\rho_0 2^{-1/(2-\alpha)}$ such that
\begin{align}\label{Use1Cor8.5}\frac{\bet{\{y\in R(x_0,V_\xi):u(y)<u(x_0)+\ska{y-x_0,\nabla\Gamma(x_0)}-C_1f(x_0)(lr)^2\}}}{\bet{R(x_0,V_\xi)}}\leq l\end{align}  
and
\begin{align}\label{Use2Cor8.6}\bet{\nabla\Gamma(B_{lr/2}(x_0))}\leq c_1f(x_0)^n\bet{B_{lr/2}(x_0)}.\end{align}
Fix an index $j_0 \in \N$ such that $\frac{lr}{4}\leq d_{j_0}< \frac{lr}{2}$. Therefore
\begin{align}\label{Verschachtelung}B_{lr/2}(x_0)\supset\overline{Q^{j_0}}, \quad B_{r}(x_0)\subset\eta Q^{j_0}\subset B_3.\end{align}

Note that $\Gamma(y)\leq u(x_0)+\ska{y-x_0,\nabla\Gamma(x_0)}$ in $B_3$. Recall $\eta=(1+\frac{8}{l})\sqrt{n}$. Using \eqref{Use1Cor8.5}, \eqref{Verschachtelung} and the relation between $d_{j_0}$ and $r$, we obtain
\allowdisplaybreaks
\begin{align*}
&\bigl|\{y\in \eta Q^{j_0}: u(y)\geq\Gamma(y)-C_2(\sup\nolimits_{\,\overline{Q^{j_0}}}f)d_{j_0}^2\}\bigr| \\
&\quad\geq \bigl|\{y\in \eta Q^{j_0}: u(y)\geq u(x_0)+\ska{y-x_0,\nabla\Gamma(x_0)}-c_2 f(x_0)\tfrac{(lr)^2}{16}\}\bigr|\\*[0.1cm]
&\quad\geq\bigl|\{y\in R(x_0,V_\xi): u(y)\geq u(x_0)+\ska{y-x_0,\nabla\Gamma(x_0)}-C_1 f(x_0)(lr)^2\}\bigr|\\
&\quad\geq\bet{R(x_0,V_\xi)}-l\bet{R(x_0,V_\xi)}=(1-l)\bet{R(x_0,V_\xi)}\geq\nu\bet{\eta Q^{j_0}}.
\end{align*}
Moreover, using \eqref{Use2Cor8.6} and \eqref{Verschachtelung}, we obtain
\begin{align*}
\bet{\nabla\Gamma(\overline{Q^{j_0}})}&\leq\bet{\nabla\Gamma(B_{lr/2}(x_0))}\leq c_1 f(x_0)^n\bet{B_{lr/2}(x_0)} \leq c_1(\sup_{\overline{Q^{j_0}}}f)^n\bet{\eta Q^{j_0}}=C_2(\sup_{\overline{Q^{j_0}}}f)^n\bet{Q^{j_0}}.
\end{align*}
Therefore $Q^{j_0}$ satisfies $(1)-(3)$ with $C_2,\nu$ from above. Contradiction.
\end{proof}

The following corollary can be seen as a discretized version of the Aleksandrov-Bakelman-Pucci estimate \cite[Theorem 3.2]{CaCa-95} in our setting. Note that the index $m$ in the assertion below depends on $\alpha$ with $m \to +\infty$ for $\alpha \to 2-$.

\begin{cor}
\label{cor:nonlocalABP}
Under the assumptions of \autoref{thm:cubecover} we have
\[\sup_{B_1}u^+\leq C_3\biggl(\sum_{j=1}^m(\sup_{\overline{Q^j}}f)^n\bet{Q^j}\biggr)^{1/n},\]
with $m \in \N$, $(Q^j)$ as in \autoref{thm:cubecover} and $C_3=C_3(n,\delta,\rho_0,\lambda)\geq 1$.
\end{cor}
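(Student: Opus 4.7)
The plan is to derive the estimate from \autoref{thm:cubecover}(2) by combining it with the classical observation that the image of the concave envelope's gradient map applied to the contact set must cover a ball whose radius is comparable to $\sup_{B_1}u^+$.

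\textbf{Step 1: Image of $\nabla\Gamma$ covers a ball.} Let $M=\sup_{B_1}u^+$ and fix $x_0\in B_1$ with $u(x_0)$ close to $M$; then $\Gamma(x_0)\geq u(x_0)$. I would show that $B_{cM}(0)\subset\nabla\Gamma(\Sigma)$ for some dimensional constant $c>0$. For any $p\in\R^n$ with $|p|<M/6$, the affine function $y\mapsto p\cdot y+c_p$ that supports $\Gamma$ from above (chosen so its value at the maximizer of $\Gamma-p\cdot y$ on $B_3$ equals $\Gamma$ there) must, by the definition of the concave envelope as the infimum of affine majorants of $u^+$, actually touch $u^+$ at some point $y^\star\in\overline{B_3}$. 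A short arithmetic check using $L(x_0)\geq M$ and $|p\cdot y|\leq 3|p|<M/2$ on $B_3$ forces $u^+(y^\star)>0$, hence $y^\star\in B_1$ and $u(y^\star)=\Gamma(y^\star)$, so $y^\star\in\Sigma$ and $p\in\nabla\Gamma(y^\star)$.

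\textbf{Step 2: Covering and measure bound.} By \autoref{thm:cubecover}, the cubes $(Q^j)_{j=1,\dots,m}$ cover $\Sigma$, so
\[
\nabla\Gamma(\Sigma)\subset\bigcup_{j=1}^m\nabla\Gamma(\overline{Q^j}).
\]
Taking Lebesgue measure and applying property (2) of \autoref{thm:cubecover} gives
\[
c_n (cM)^n=|B_{cM}(0)|\leq|\nabla\Gamma(\Sigma)|\leq\sum_{j=1}^m|\nabla\Gamma(\overline{Q^j})|\leq C_2\sum_{j=1}^m(\sup_{\overline{Q^j}}f)^n|Q^j|.
\]

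\textbf{Step 3: Conclusion.} Taking $n$-th roots and absorbing constants yields the desired estimate with $C_3=C_3(n,\delta,\rho_0,\lambda)$.

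I expect the only subtle point to be Step 1, specifically justifying that the contact between the supporting plane $L$ and $u^+$ occurs inside $B_1$ (not merely in $\overline{B_3}$ at a point where $u^+=0$); this is precisely where the inequality $M-|p|\cdot|x_0|>3|p|$ is used. Steps 2 and 3 are then completely mechanical. One should be mindful that although $m$ may blow up as $\alpha\to 2^-$, the constants $c$, $c_n$, and $C_2$ do not, so the resulting $C_3$ remains bounded in the robust range, which is essential for the later application to \autoref{thm:holder}.
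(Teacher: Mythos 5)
Your proof is correct and follows essentially the same route as the paper's. The paper obtains the inclusion $B_{S/4}\subset\nabla\Gamma(B_1)$ by citing the geometric argument in Gilbarg--Trudinger Lemma 9.2 and then invokes the identity $\nabla\Gamma(B_1\cap\{u=\Gamma\})=\nabla\Gamma(B_1)$ to pass to the contact set, which is precisely what your Step~1 establishes directly (with the slightly suboptimal but harmless radius $M/6$ in place of $M/4$); Steps~2 and~3 then coincide with the paper's covering argument using \autoref{thm:cubecover}(2).
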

\begin{proof}
Set $S=\sup_{B_1}u^+$. Since $u^+=0$ in $\R^n\setminus B_{1}$ and $u$ is upper semicontinuous, there is $x_0 \in B_1$ with $S=u^+(x_0)$. Using the geometric argument given in the proof of \cite[Lemma 9.2]{GiTr-98} we deduce
\[B_{S/4}\subset\nabla\Gamma(B_1) \quad \Rightarrow  \quad  S^n\leq c_1 \bet{\nabla\Gamma(B_1)} \,,\]
with some constant $c_1=c_1(n) \geq 1$. Part (2) of \autoref{thm:cubecover} now implies 
\[\sup_{B_1}u^+\leq c_1^{1/n} \bet{\nabla\Gamma(B_1)}^{1/n}= c_1^{1/n} \bet{\nabla\Gamma(B_1\cap\{u=\Gamma\})}^{1/n}\leq C_3\biggl(\sum_{j=1}^m(\sup_{\overline{Q^j}}f)^n\bet{Q^j}\biggr)^{1/n},\]
where $C_3\geq 1$ depends only on $n$, $\delta$, $\rho_0$ and $\lambda$. Here, we have used the fact that
\begin{align}\label{factconcaveenvelope} 
\nabla\Gamma(B_1\cap\{u=\Gamma\})=\nabla\Gamma(B_1)\,,
\end{align}
which follows from (\ref{eq:conc_env}) and the properties $u\leq0$ in $\R^n\setminus B_1$ and $u^+ \not\equiv 0$.
\end{proof}

 
\section{A Special Bump Function}\label{sec:SpecialFunc}

In this section we construct a special function with the properties as the one in \cite[Lemma 4.1]{CaCa-95}. We will use this function in Section \ref{sec:PointEsti} in combination with the ABP substitute from the previous section. The construction is based on an idea used in \cite{CaSi-09RegularityIntegroDiff}, but differs significantly to deal with the fact that the mass of the kernels, $K(y)$, could be concentrated on only a small sector, $V_\xi$.  This special function will appear at the end of this section in Corollary \ref{SpecialCor:special_func}.

To begin, we will consider a two parameter family of functions $f_{\gam,p}\in C^{1,1}(\real^n)$ given by

\begin{equation*}
	f_{\gam,p}(y) = \hat f(\abs{y})
\end{equation*}
and
\begin{equation}\label{SpecialEq:fHatDef}
	\hat f(r)=
	\begin{cases}
		r^{-p}\ &\text{if}\ r\geq 1-\frac{C_4}{2}\\
		m_{\gam,p}(r)\ &\text{if}\ 1-C_4\leq r \leq 1-\frac{C_4}{2}\\
		\gam^{-p}\ &\text{if}\ r\leq 1-C_4.
	\end{cases}
\end{equation}
Here we choose to take the middle function, $m_{\gam,p}$, so that $\hat f$ is $C^{1,1}(\real^n)$ and monotone decreasing for $r\in[0,\infty)$.  The value of $C_4\in(0,1)$ depends only on $\del$ via the opening of the sectors, $V_\xi$, and is chosen so that for some universal $\mu_0>0$

\begin{equation}\label{SpecialFunEq:ChoiceC0}
\text{for all}\ \xi\in\sphere^{n-1},\	\abs{\{y\in V_\xi\ :\ e_1 + y\in B_{1-C_4}(0)\}}\geq \mu_0.
\end{equation}

The special function will be constructed in two phases.  First in Lemma \ref{SpecialLem:PLarge} we will find a value of $p$ large enough that we can make $M^-_{\A_{sec}}f_{\gam,p}\geq 0$ for all $\al$ near $2$.  Then in Lemma \ref{SpecialLem:GammaSmall} we take $\gam$ small enough to cover the range of $\al$ down to $\al_0$.  Before we get to those results,
we note a couple of useful properties of the family $\{f_{\gam,p}\}$.

\begin{note}\label{SpecialNote:GammaOrdering}
	If $\gam_1<\gam_2$ and $p$ is fixed, then for all $y$
	\begin{equation*}
		f_{\gam_1,p}(y)\geq f_{\gam_2,p}(y),
	\end{equation*}
	and the two functions are equal when $\displaystyle \abs{y}\geq 1-\frac{C_4}{2}$, hence 
	\begin{equation*}
		M^{-}_{\A_{sec}}f_{\gam_1,p}(x)\geq M^-_{\A_{sec}}f_{\gam_2,p}(x),
	\end{equation*}
	for all $\displaystyle \abs{x}\geq 1-\frac{C_4}{2}$.
\end{note}

We also record a useful reduction for the computations. 

\begin{lem}[{\cite[p.623]{CaSi-09RegularityIntegroDiff}}]\label{SpecialLem:ReductionToe1}
	Let $f=f_{\gam,p}$. If $M^-_{\A_{sec}}f(e_1)\geq 0$, then $M^-_{\A_{sec}}f(x)\geq 0$ for all $\abs{x}\geq 1$.
\end{lem}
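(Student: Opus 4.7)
The plan is to reduce to an essentially one-parameter problem in $r=|x|$ via two ingredients: rotational invariance and a scaling identity for $M^-_{\A_{sec}}$. First, the class $\A_{sec}$ in (\ref{PrelimNotationEq:ClassAsect}) is rotationally invariant because the direction $\xi$ ranges over all of $\sphere^{n-1}$; since $f=f_{\gamma,p}$ is radial, the function $x\mapsto M^-_{\A_{sec}}f(x)$ is radial as well, so it suffices to prove $M^-_{\A_{sec}}f(re_1)\geq 0$ for every $r\geq 1$.

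Next, to transport the assumption at $e_1$ to the point $re_1$, I would introduce the auxiliary function
\[ g(y)\;:=\;r^{-p}\,f(y/r). \]
A change of variable $h=rh'$ inside $L_K$, together with the fact that $V_\xi$ is a cone (so $h\in V_\xi$ if and only if $rh\in V_\xi$), shows that the map $K(h)\mapsto r^n K(rh)$ sends $\A_{sec}$ bijectively onto $r^{-\alpha}\A_{sec}$. Combining this with the prefactor $r^{-p}$ gives the scaling identity
\[ M^-_{\A_{sec}}g(re_1)\;=\;r^{-p-\alpha}\,M^-_{\A_{sec}}f(e_1)\;\geq\;0. \]
Moreover $g(re_1)=r^{-p}\hat f(1)=r^{-p}=\hat f(r)=f(re_1)$, and on the outer region $|y|\geq r(1-C_4/2)$ both functions coincide with $|y|^{-p}$.

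The final step is a touching argument which requires the pointwise comparison $g\leq f$ on all of $\R^n$. On the outer region this is an equality; on the inner region $|y|\leq 1-C_4$ one has $g(y)=r^{-p}\gamma^{-p}\leq\gamma^{-p}=f(y)$ since $r\geq 1$; in the remaining transition annulus the inequality must be inherited from the design of $m_{\gamma,p}$. Granted this, $f-g\geq 0$ attains its global minimum at $re_1$ (where the value is $0$), hence $\delta^2_h(f-g)(re_1)\geq 0$ for every $h\in\R^n$. Integrating against any $K\in\A_{sec}$ (which is nonnegative) yields $L_Kf(re_1)\geq L_Kg(re_1)$, and taking the infimum over $K$ completes the proof:
\[ M^-_{\A_{sec}}f(re_1)\;\geq\;M^-_{\A_{sec}}g(re_1)\;\geq\;0. \]

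The main obstacle is the pointwise comparison $g\leq f$ in the transition annulus, which forces $C_4$ and the interpolant $m_{\gamma,p}$ in (\ref{SpecialEq:fHatDef}) to be chosen consistently with the dilation $y\mapsto y/r$ for every $r\geq 1$; whatever flexibility the construction of $m_{\gamma,p}$ leaves will therefore be used precisely to secure this step (alternatively, the comparison may be obtained by bounding $g$ above by the radial decreasing rearrangement of $f$ restricted to $|y|\geq 1-C_4/2$, which avoids a case analysis of the interpolation).
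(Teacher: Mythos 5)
Your proposal follows essentially the same route as the paper: rotational invariance of $M^-_{\A_{sec}}$ reduces to $re_1$, a kernel change of variables (using that $V_\xi$ is a cone and $\A_{sec}$ is stable under $K\mapsto r^n K(r\cdot)$ up to the factor $r^{-\alpha}$) gives the scaling identity, and a touching/comparison argument transfers the sign. The only real difference is cosmetic: the paper dilates, setting $\tilde f(x)=c^{p}f(cx)$ with $c>1$ and touching at $e_1$, whereas you contract, setting $g(y)=r^{-p}f(y/r)$ and touching at $re_1$; after the substitution $t=s/r$ the required pointwise inequality $r^{-p}\hat f(s/r)\leq\hat f(s)$ is literally the same as the paper's $c^{p}\hat f(cs)\geq\hat f(s)$.

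One step you skip, and should not, is the paper's preliminary use of \autoref{SpecialNote:GammaOrdering} to reduce the computation to $\gam_0=1$ before asserting the pointwise comparison. This is not cosmetic: the comparison $g\leq f$ on the transition annulus is equivalent to $s\mapsto s^{p}\hat f(s)$ being non-decreasing, and since $s^{p}\hat f(s)$ equals $(1-C_4)^{p}\gam^{-p}$ at $s=1-C_4$ and equals $1$ for $s\geq 1-C_4/2$, monotonicity forces $\gam\geq 1-C_4$. For the small $\gam_1$ used in \autoref{SpecialLem:GammaSmall} this fails, so your comparison cannot be secured by any choice of $m_{\gam,p}$ alone. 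The paper sidesteps this by first proving the scaling step only for $\gam_0$ and then propagating the conclusion to smaller $\gam$ via \autoref{SpecialNote:GammaOrdering}. Adding that reduction at the start of your argument closes the gap you flagged at the end.
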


\begin{proof}[Proof of Lemma \ref{SpecialLem:ReductionToe1}]
	First, we note that $M^-_{\A_{sec}}$ is rotationally invariant due to the definition of $\A_{sec}$ (it could fail to be rotationally invariant if $\A_{sec}$ was a smaller collection of kernels).  Therefore by the radial symmetry of $f$ we see that
	\begin{equation*}
		M^-_{\A_{sec}}f(x) = M^-_{\A_{sec}}f(\abs{x}e_1)
	\end{equation*} 
	for all $x\in\real^n$ for which $M^-_{\A_{sec}}$ is well defined. 
	
	Second, we use \autoref{SpecialNote:GammaOrdering} to reduce the calculation to the lowest value of $\gam$, say $\gam_0$ which will be fixed below (in fact $\gam_0=1$ will suffice).  Indeed, assuming we have proved that 
	\begin{equation*}
		M^-_{\A_{sec}}f_{\gam_0,p}(x)\geq 0,
	\end{equation*} 
	then note \ref{SpecialNote:GammaOrdering} gives that for all $\gam<\gam_0$
	\begin{equation*}
		M^-_{\A_{sec}}f_{\gam,p}(x)\geq M^-_{\A_{sec}}f_{\gam_0,p}(x).
	\end{equation*}

	Third we see that if
	\begin{equation*}
		\tilde f(x) = c^pf(cx),
	\end{equation*} 
	then whenever $c>1$ and $\displaystyle\abs{x}>1-\frac{C_4}{2}$
	\begin{equation*}
		\tilde f(x) = c^p\abs{cx}^{-p} = f(x),
	\end{equation*}
	and one can check that $\tilde f(x)\geq f(x)$ for all $x$.
	
	To conclude, let $x_0=\abs{x_0}e_1$ be fixed, and let $c=\abs{x_0}$.  Then we note that (using $f(x)=c^{-p}\tilde f(\frac{x}{c})$)
	\allowdisplaybreaks
	\begin{align*}
		M^-_{\A_{sec}}f_{\gam_0,p}(x_0) &= M^-_{\A_{sec}}c^{-p}\tilde f(\frac{\cdot}{c})(x_0)\\
		&= c^{-p}c^{-\al}M^-_{\A_{sec}}\tilde f(\frac{x_0}{c})\\
		&\geq c^{-p-\al}M^-_{\A_{sec}}f(\frac{x_0}{c})\\
		&=c^{-p-\al}M^-_{\A_{sec}}f(e_1)\\
		&\geq 0 \ \text{(by assumption)}.
	\end{align*}
	 
\end{proof}

\begin{lem}\label{SpecialLem:PLarge}
	Let $\gam_0=1$ be fixed.  Then, there exists a $p_0$ and an $\al_1$, depending only on $\gam_0$, $C_0$, $\del$, $n$, $\lam$, $\Lam$,   such that 
	\begin{equation*}
		M^-_{\A_{sec}}f_{\gam_0,p}(x)\geq 0 \ \ \ \text{for all}\ \ \abs{x}>1,
	\end{equation*}
	 for all orders, $\al\in(\al_1,2)$.
\end{lem}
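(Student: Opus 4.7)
The plan is to use \autoref{SpecialLem:ReductionToe1} to reduce everything to showing $M^-_{\A_{sec}} f(e_1) \geq 0$ for $f = f_{\gamma_0, p}$ with $\gamma_0 = 1$. Via the analogue of \autoref{lem:FormulaForMPlus} for $M^-$, this amounts to proving that for every $\xi \in \sphere^{n-1}$,
\[
(2-\alpha) \int_{\R^n} \bigl[\lambda (\delta_h^2 f(e_1))^+ \Ind{V_\xi}(h) - \Lambda (\delta_h^2 f(e_1))^-\bigr] \mu(dh) \geq 0.
\]
I would split the integral at $|h| = r_0$ for a fixed $r_0 = C_4/4$, chosen so that for $|h| \leq r_0$ both $e_1 \pm h$ lie inside the smooth region $\{|y| \geq 1 - C_4/2\}$ where $f(y) = |y|^{-p}$. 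The strategy is to extract positivity of order $p^2$ from the local leading term and to show that the local Taylor remainder and the global tail are absorbed once $\alpha$ is close to $2$, thanks to the $(2-\alpha)$ factor they carry.

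For the local leading term, Taylor expansion of $f(y) = |y|^{-p}$ at $e_1$ gives $D^2 f(e_1) = (p^2+p)\, e_1 e_1^T - p(I - e_1 e_1^T)$, so in polar coordinates $h = s\omega$,
\[
h^T D^2 f(e_1)\, h = s^2 \gamma(\omega), \qquad \gamma(\omega) := (p^2+2p)\omega_1^2 - p.
\]
After the radial integration $\int_0^{r_0} s^{1-\alpha} ds = r_0^{2-\alpha}/(2-\alpha)$, the local leading contribution becomes $r_0^{2-\alpha} J(\xi, p)$ with
\[
J(\xi, p) := \lambda \int_{V_\xi} \gamma^+\, dS - \Lambda \int_{\sphere^{n-1}} \gamma^-\, dS.
\]
Since $\gamma^- \leq p$ is supported on the thin equatorial band $\{|\omega_1| < 1/\sqrt{p+2}\}$, of spherical measure $\leq C_n/\sqrt{p}$, one immediately gets $\int \gamma^- dS \leq C_n \sqrt{p}$.

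The main geometric obstacle, and the crucial new difficulty compared to \cite{CaSi-09RegularityIntegroDiff} where $V_\xi = \R^n$ makes the analogous integral simply $|\sphere^{n-1}|/n$, is the uniform lower bound
\[
c_1(\delta, n) := \inf_{\xi \in \sphere^{n-1}} \int_{V_\xi} \omega_1^2\, dS(\omega) > 0.
\]
I would establish this by noting that $\xi \mapsto \int_{V_\xi} \omega_1^2 dS$ is continuous in $\xi$ (by dominated convergence) and strictly positive for every fixed $\xi$ (since the cone $V_\xi$ is never contained in the hyperplane $\{\omega_1 = 0\}$), then invoking compactness of $\sphere^{n-1}$. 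Combining this with the pointwise inequality $\gamma^+ \geq \gamma$ gives $\int_{V_\xi} \gamma^+ dS \geq (p^2+2p) c_1 - p |V_\xi|$, and therefore $J(\xi, p) \geq \tfrac{1}{2} \lambda c_1 p^2$ uniformly in $\xi$ once $p$ exceeds some threshold $p_0^* = p_0^*(c_1, n, \lambda, \Lambda)$.

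To conclude, I would control the remaining terms. The local Taylor remainder is $O(p^4 |h|^4)$ and integrates against $\Lambda |h|^{-n-\alpha}$ over $B_{r_0}$ to give at most $C (2-\alpha) p^4 r_0^{4-\alpha}$; the global tail is bounded via $\|f\|_\infty \leq (1 - C_4/2)^{-p}$ to give at most $C (2-\alpha)(1 - C_4/2)^{-p} r_0^{-\alpha}$. Fixing $p_0 \geq p_0^*$ and noting that $r_0^{2-\alpha}$ is bounded below by a positive constant for $\alpha \in (\alpha_0, 2)$, the local positive contribution $r_0^{2-\alpha} \cdot \tfrac{1}{2} \lambda c_1 p_0^2$ dominates both error terms, which vanish as $\alpha \to 2^-$ owing to the $(2-\alpha)$ factor. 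Choosing $\alpha_1 < 2$ sufficiently close to $2$ (depending on $p_0, r_0, n, \delta, \lambda, \Lambda$) then yields $M^-_{\A_{sec}} f(e_1) \geq 0$ and completes the proof.
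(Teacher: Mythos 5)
Your proposal is correct and follows essentially the same strategy as the paper: reduce to $e_1$ via \autoref{SpecialLem:ReductionToe1}, extract a positive contribution of order $p^2$ from the quadratic term of the local Taylor expansion using the uniform geometric bound $\inf_{\xi}\int_{V_\xi}\omega_1^2\,dS>0$, and absorb the local remainder and global tail by exploiting the $(2-\alpha)$ factor they carry. Your $\gamma^\pm$ split gives a slightly sharper $O(\sqrt{p})$ bound on the negative spherical contribution than the paper's monomial split (which gets $O(p)$), and you spell out the compactness argument for the uniform lower bound that the paper only asserts as ``$\exists\,\mu_1$''; neither difference alters the core mechanism of the proof.
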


\begin{proof}[Proof of Lemma \ref{SpecialLem:PLarge}]
	We first note that by Lemma \ref{SpecialLem:ReductionToe1}, it suffices to estimate only $M^-_{\A_{sec}}f(e_1)$.
	
	Let $K$ be any kernel in $\A_{sec}$.  In the end of the proof, no constants will depend on this particular $K$.  Let $\Indicator_\xi$ be its corresponding lower bound sector.
	Let us drop the parameters $\gam,p$ from $f$ for ease of notation.  We will first include some preliminary calculations and choices of constants.  Then at the end, we put all of these calculations together to conclude the lemma.
	
	For this part of the construction of the special function, $\al$ will be close to $2$, and therefore it is the local behavior of $f$ which is essential.  We start by focusing on the contribution to $M^-_{\A_{sec}}f(e_1)$ given by the integration for $h\in B_r$.  We make an important emphasis that for each $\gam,p, \al$ there is a direction which optimizes $M^-_{\A_{sec}}f(e_1)$ which depends on all of $\gam$, $p$, and $\al$ via (\ref{PointEq:FormulaMPlus}).  Let us call that direction $\xi$ throughout, but with the understanding that it depends on $\gam,p,\al$.  None of the estimates we prove will depend at all on the specific choice of $\xi$-- they will only depend on the opening of the sector, $\del$, and the other universal parameters.
		
		Whenever $\displaystyle r< \frac{C_4}{2}$, then the inequality (\cite[p.624]{CaSi-09RegularityIntegroDiff}) holds
		\begin{equation}\label{SpecialFunEq:DelfEst}
			\del^2_h f(e_1) \geq p\left((-\abs{h}^{2}+(p+2)(h_1)^2-\frac{1}{2}(p+2)(p+4)(h_1)^2\abs{h}^2)\right).
		\end{equation}
	Therefore we fix let $r_0=\frac{C_4}{2}$ for the remaining calculations.  Next we observe
		if $h\in\real^n\setminus B_{r_0}$, then 
		\begin{equation}\label{SpecialFunEq:DelfEstFarAway}
			\del^2_h f(e_1)\geq 2(\inf(f)-f(e_1))=-2.
		\end{equation}
		We can thus conclude a bound from below on the contribution outside of $B_{r_0}$,
		\begin{align}
			(2-\al)\int_{\real^n\setminus B_{r_0}} -\Lam(\del^2_h f(e_1))^++\lam\Indicator_{V_\xi}(\del^2_h f(e_1))^-\mu(dh) &\geq (2-\al)\int_{\real^\setminus B_{r_0}}-\Lam(\del^2_h f(e_1))^-\mu(dh)\nonumber\\
			&\geq (-2)(2-\al)\int_{\real^n\setminus B_{r_0}}\Lam\mu(dh).\label{SpecialNote:DelUMinusControl}
		\end{align}

		In order to determine the good value of $p_0$, we note first there exists a $\mu_1$ which depends only on $n$ and the sector opening, $\del$, such that for all $\xi$
		\begin{equation}\label{SpecialFunEq:Mu1Def}
			\int_{\partial B_1}(z_1)^2\Indicator_{V_\xi}(z)dS(z)\geq \mu_1.
		\end{equation}
		Choose $p_0$ large enough so that 
		\begin{equation}\label{SpecialFunEq:ChoiceOfC2}
			\lam(p_0+2)\int_{\partial B_1}(z_1)^2\Indicator_{V_\xi}(z)dS(z)-\Lam\int_{\partial B_1}\abs{z}^2dS(z)\geq C_5 >0.
		\end{equation}
		We point out that this choice of $p_0$ (and hence $C_5$) depends only on $\mu_1$, $n$, $\lam$, $\Lam$, and since $\mu_1$ depends only on $n$ and $\del$, $p_0$ depends on those as well.  It is essential to note that $p_0$ does not depend on $\gam_0$.

	Now we can estimate the contribution to $M^-_{\A_{sec}}f(e_1)$ from $B_{r_0}$.  We note that we only care about $(\del^2_y f(e_1))^+$.  This is one place where the original definition of $M^-_{\A_{sec}}$ in (\ref{PrelimNotationEq:MMinusDef}) is more helpful than the formula from Lemma \ref{PointEq:FormulaMPlus}.  First we use (\ref{SpecialFunEq:DelfEst}) then (\ref{specialK}), and finally (\ref{SpecialFunEq:ChoiceOfC2}).
	
	\begin{align}
		&\int_{B_{r_0}} (\del^2_h f(e_1)) K(h)dh = \int_0^{r_0}s^{n-1}\int_{\partial B_1} (\del^2_{sz} f(e_1))K(sz)dS(z) ds\nonumber\\		
		&\ \ \ \ \geq \int_0^{r_0}s^{n-1}\int_{\partial B_1} p_0\left((-\abs{sz}^{2}+(p_0+2)(sz_1)^2-\frac{1}{2}(p_0+2)(p_0+4)(sz_1)^2\abs{sz}^2)\right) K(sz)dS(z)ds\nonumber\\
		&\ \ \ \ \geq (2-\al)\int_0^{r_0}s^2s^{-1-\al}\bigg[\int_{\partial B_1}\lam p_0(p_0+2)(z_1)^2\Indicator_{V_\xi}(z)dS(z) - \int_{\partial B_1} \Lam p_0\abs{z}^{2}dS(z)\nonumber\\ 
		&\ \ \ \ \ \ \ \ \ \ \ \ \ \ \ \ \  \ - s^2\int_{B_{r_0}}\Lam\frac{1}{2}p_0(p_0+2)(p_0+4)(z_1)^2\abs{z}^2 dS(z)\bigg]ds\nonumber\\
		&\ \ \ \ \geq (2-\al)\int_0^{r_0}s^2s^{-1-\al} [p_0C_5  -s^2\om(n)\Lam\frac{1}{2}p_0(p_0+2)(p_0+4)]ds\nonumber \\
		&\ \ \ \ =\lam p_0 C_5(r_0)^{2-\al} -   \om(n)\Lam\frac{1}{2}\frac{2-\al}{4-\al}p_0(p_0+2)(p_0+4)(r_0)^{4-\al}.\label{SpecialFunEq:EstInBr}
	\end{align}
	
	And we put all the pieces together with (\ref{SpecialFunEq:EstInBr}) and (\ref{SpecialFunEq:DelfEstFarAway})
 	\begin{align}
		&L_K(f,e_1) = (2-\al)\int_{\real^n}\del^2_h f(e_1)K(h)dh\nonumber\\
		&\ \ \ \ = \int_{B_{r_0}} \del^2_h f(e_1)K(h)dh + \int_{\real^n\setminus B_{r_0}}\del^2_h f(e_1)K(h)dh\nonumber\\
		&\ \ \ \ \geq \lam p_0 C_5(r_0)^{2-\al} -   \om(n)\Lam\frac{1}{2}\frac{2-\al}{4-\al}p_0(p_0+2)(p_0+4)(r_0)^{4-\al}
		-2(2-\al)\int_{\real^n\setminus B_{r_0}}\Lam\mu(dh)\label{SpecialFunEq:LargePConcludeLine}
	\end{align}
	
	To conclude, we take $\al_1$ close enough to $2$ so that for all $\al\in(\al_1,2)$, (\ref{SpecialFunEq:LargePConcludeLine}) is $\geq 0$.  Hence by the definition of $M^-_{\A_{sec}}$, it follows that $M^-_{\A_{sec}}f(e_1)\geq0$, which concludes the lemma.	
\end{proof}

Now that we have the behavior of the special function controlled for $\al\in(\al_1,2)$, we need to get the behavior for $\al\in(\al_0,\al_1]$.

\begin{lem}\label{SpecialLem:GammaSmall}
	Let $\al_0$ be the lower bound on $\al$ given in the introduction.  Let $\al_1$ and $p_0$ be fixed from Lemma \ref{SpecialLem:PLarge}.  Then there exists $\gam_1>0$, depending only on $C_4$, $\mu_0$ (and hence $\del$) $\al_0$, $\al_1$, $p_0$, $n$, $\lam$, $\Lam$, such that for all $\al\in(\al_0,2)$ and $\abs{x}>1$
	\begin{equation}
		M^-_{\A_{sec}}f_{\gam_1,p_0}(x)\geq 0.
	\end{equation}
\end{lem}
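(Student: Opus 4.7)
The plan is to first reduce, via \autoref{SpecialLem:ReductionToe1} applied to $f_{\gamma_1, p_0}$, to verifying $M^-_{\A_{sec}} f_{\gamma_1, p_0}(e_1) \geq 0$, and then to handle the two regimes of $\alpha$ separately. For $\alpha \in (\alpha_1, 2)$, the result is immediate for any $\gamma_1 \leq 1$: \autoref{SpecialLem:PLarge} (and its proof) gives $M^-_{\A_{sec}} f_{1, p_0}(e_1) \geq 0$, and \autoref{SpecialNote:GammaOrdering} then yields $M^-_{\A_{sec}} f_{\gamma_1, p_0}(e_1) \geq M^-_{\A_{sec}} f_{1, p_0}(e_1) \geq 0$, since $|e_1| = 1 \geq 1 - C_4/2$. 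All the work lies in the remaining compact range $\alpha \in (\alpha_0, \alpha_1]$, where the prefactor $(2-\alpha) \in [2-\alpha_1, 2-\alpha_0]$ is pinned away from $0$.

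For this range I would use the extremal formula \eqref{PointEq:FormulaMMinus} and bound the positive and negative parts of the integrand separately. For the positive part, the critical input is property \eqref{SpecialFunEq:ChoiceC0}: for every $\xi \in \sphere^{n-1}$ the set $S_\xi := \{y \in V_\xi : e_1 + y \in B_{1-C_4}(0)\}$ has Lebesgue measure at least $\mu_0$. For $y \in S_\xi$, the value $f_{\gamma_1, p_0}(e_1 + y) = \gamma_1^{-p_0}$ dominates while $f_{\gamma_1, p_0}(e_1 - y) \geq 0$, yielding $\delta^2_y f_{\gamma_1, p_0}(e_1) \geq \gamma_1^{-p_0} - 2$. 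Moreover $|e_1 + y| < 1 - C_4$ forces $|y| \leq 2 - C_4$, so $|y|^{-n-\alpha}$ is bounded below by a constant depending only on $n, C_4, \alpha_1$. Multiplying through and using $(2-\alpha) \geq 2 - \alpha_1$, one obtains a lower bound on the positive contribution of the form $c_1 \lambda \mu_0 (\gamma_1^{-p_0} - 2)$ with $c_1 > 0$ depending only on $n, C_4, \alpha_0, \alpha_1$. Crucially, this bound is uniform in $\xi$, which is what lets us exchange the infimum over $\xi$ with the lower bound.

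For the negative part I would invoke the monotonicity implicit in \autoref{SpecialNote:GammaOrdering}: since $f_{\gamma_1, p_0} \geq f_{1, p_0}$ pointwise with equality at $e_1$, one has $\delta^2_h f_{\gamma_1, p_0}(e_1) \geq \delta^2_h f_{1, p_0}(e_1)$ for every $h$, hence $(\delta^2_h f_{\gamma_1, p_0}(e_1))^- \leq (\delta^2_h f_{1, p_0}(e_1))^-$. The negative contribution is therefore bounded above, uniformly in $\gamma_1 \in (0, 1]$, by $(2-\alpha_0) \Lambda \int_{\R^n} (\delta^2_h f_{1, p_0}(e_1))^- |h|^{-n-\alpha} dh$, and this last integral is a finite constant $C_6 = C_6(n, \Lambda, p_0, \alpha_0, C_4)$ uniformly in $\alpha \in (\alpha_0, \alpha_1]$: the $C^{1,1}$ bound $|\delta^2_h f_{1, p_0}(e_1)| \leq A |h|^2$ near the origin cancels two powers of the singularity in $|h|^{-n-\alpha}$ since $\alpha < 2$, while boundedness of $f_{1, p_0}$ together with integrability of $|h|^{-n-\alpha_0}$ at infinity handles the far field.

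Choosing $\gamma_1 \in (0, 1]$ small enough that $c_1 \lambda \mu_0 (\gamma_1^{-p_0} - 2) \geq C_6$ then yields $M^-_{\A_{sec}} f_{\gamma_1, p_0}(e_1) \geq 0$ for every $\alpha \in (\alpha_0, \alpha_1]$ and every $\xi$. Combined with the first paragraph, this closes the argument, and the resulting $\gamma_1$ depends only on the parameters listed in the lemma. The main obstacle is precisely that the infimum over $\xi$ in \eqref{PointEq:FormulaMMinus} sits \emph{outside} the integral of the positive part: without the uniform volume bound \eqref{SpecialFunEq:ChoiceC0}, the sector $V_\xi$ could be rotated to avoid the bump region $\{e_1 + y \in B_{1-C_4}\}$ altogether, nullifying our only source of positivity. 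The uniform $\mu_0$ is the geometric lever that keeps the construction alive in the $\A_{sec}$ setting and explains why the bump-function argument here must deviate so substantially from the $V_\xi = \R^n$ case of \cite{CaSi-09RegularityIntegroDiff}.
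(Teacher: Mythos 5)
Your argument is correct and follows essentially the same route as the paper: reduce to $e_1$ via \autoref{SpecialLem:ReductionToe1}, dispatch $\alpha\in(\alpha_1,2)$ via \autoref{SpecialLem:PLarge} and \autoref{SpecialNote:GammaOrdering}, and on the compact range $\alpha\in(\alpha_0,\alpha_1]$ lower-bound the positive contribution using the uniform volume estimate \eqref{SpecialFunEq:ChoiceC0} while bounding the negative contribution by a $\gamma$-independent constant. The one cosmetic difference is that the paper introduces the auxiliary function $\phi_{\gamma,p}=\min\{\gamma^{-p},|y|^{-p}\}$ to obtain the uniform bound on $(\delta^2_h f_{\gamma,p_0}(e_1))^-$, whereas you use $f_{1,p_0}$ directly via the monotonicity in \autoref{SpecialNote:GammaOrdering}; both give the same conclusion, and your variant avoids the extra function.
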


\begin{proof}[Proof of Lemma \ref{SpecialLem:GammaSmall}]
Just to be concrete, let $\gam_0=1/4$.  There are some lower bounds which will be much easier to treat if we have one function to plug into the integrals for all choices of $\gam$.  To this end, we introduce an auxiliary function simply for the sake of some estimates.  Let $\phi_{\gam,p}$ be 
	\begin{equation}
		\phi_{\gam,p}(y) = \min\{\gam^{-p},\abs{y}^{-p}\}.
	\end{equation}
	We know that $\phi_{\gam,p}\in C^2(\real^n\setminus B_\gam)$.

		Furthermore, we can use the function $\phi_{\gam,p_0}$ as a lower bound for $f_{\gam,p_0}$.  Indeed,  let $\gam\leq\gam_0$.  Then we have that $\phi_{\gam,p_0}(y)=f_{\gam,p_0}(y)$ whenever $\displaystyle \abs{y}\geq 1-\frac{C_4}{2}$, and also that $\phi_{\gam,p_0}(y)\leq f_{\gam,p_0}(y)$ for all $y$.  Also for $\abs{x}\geq 1$,
		\begin{equation}
			M^-\phi_{\gam,p_0}(x)\geq -C\norm{\phi_{\gam,p_0}}_{C^{1,1}(\real^n\setminus B_{1/2})}.
		\end{equation} 
	Finally we note that the $C^{1,1}(\real^n\setminus B_{1/2})$ norm of $\phi_{\gam,p_0}$ is independent of $\gam$.

		   Using $\phi_{\gam_0,p_0}$ we see that there exists a $C$ depending only on $n$, $\lam$, $\Lam$ such that
		\begin{equation}
			\Lam\int_{\real^n}-(\del f_{\gam,p_0}(e_1,y))^-\abs{y}^{-n-\al}dy\geq -C(\norm{\phi_{\gam_0,p_0}}_{C^{1,1}(B_{1/2}(e_1))} + 2(1-\inf_{\real^n}(f))),
		\end{equation}
for all $\gam\leq\gam_0$.
	
We now proceed with the calculation which will lead to the choice of $\gam_1$.  Just as in the previous lemma, we work with $L_K$ instead of $M^-$, and the result follows because no estimates depend on the particular choice of $K$.
	
	We drop the $\gam,p_0$ subscripts until the end.
	\begin{align}
		&\int_{\real^n}\del^2_h f(e_1)K(h)dh\nonumber\\
		&\ \ \ \ \geq (2-\al)\bigg[\lam\int_{\real^n}(\del^2_h f(e_1))^+\Indicator_{V_\xi}(h)\abs{h}^{-n-\al}dh
		 -\Lam\int_{\real^n}(\del^2_h f(e_1))^-\abs{h}^{-n-\al}\bigg]\nonumber\\
		&\ \ \ \ \geq (2-\al)\lam\int_{\{y\ :\ e_1\pm y \in B_{1-C_4}(0)\}  } (\del^2_h f(e_1))^+\Indicator_{V_\xi}(h)\abs{h}^{-n-\al}dh\nonumber\\
		&\ \ \ \ \ \ \ \ \ \ \ \  - (2-\al)\Lam C(n,\al_0) (\norm{\phi_{\gam_0,p_0}}_{C^{1,1}( B_{1/2}(e_1))} + 2(1-\inf_{\real^n}(f)))\nonumber\\
		&\ \ \ \ \geq (2-\al)\lam\int_{\{y\ :\ e_1\pm y \in B_{1-C_4}(0)\}  } (\gam^{-p_0}-1)\abs{h}^{-n-\al}dh\nonumber\\
		&\ \ \ \ \ \ \ \ \ \ \ \  - (2-\al)\Lam C(n,\al_0) (\norm{\phi_{\gam_0,p_0}}_{C^{1,1}( B_{1/2}(e_1))} + 2\nonumber\\
		&\ \ \ \ \geq (2-\al)\lam (2-C_4)^{-n-\al}(\gam^{-p_0}-1)\abs{\{y\ :\ e_1\pm y \in B_{1-C_4}(0)\}  }\nonumber \\
		&\ \ \ \ \ \ \ \ \ \ \ \  - (2-\al)\Lam C(n,\al_0) (\norm{\phi_{\gam_0,p_0}}_{C^{1,1}( B_{1/2}(e_1))} + 2)\nonumber\\
		&\ \ \ \ \geq (2-\al)\lam (2-C_4)^{-n-\al}(\gam^{-p_0}-1)\mu_0 - (2-\al)\Lam C(n,\al_0) (\norm{\phi_{\gam_0,p_0}}_{C^{1,1}( B_{1/2}(e_1))} + 2)\nonumber
	\end{align}
	
	\noindent 
	We note that if $h\in\{y\ :\ e_1\pm y \in B_{1-C_4}(0)\}$, then $C_4\leq \abs{h}\leq 2-C_4$, and hence $\abs{h}^{-n-\al}\geq (2-C_4)^{-n-\al}$.  Also we note $\inf(f)=0$.  Now $\gam_1$ can be chosen to depend on only $\al_1$, $p_0$, $\mu_0$ (and hence $\del$), $n$, $\lam$, $\Lam$, so that the final line becomes $\geq 0$.
\end{proof}

\begin{cor} \label{SpecialCor:special_func}
Assume $0 < R \leq 1$ is given. There exists a continuous function $\Phi:\R^n\rightarrow\R$ (depending on $R$) with the following properties:
\begin{enumerate}
\item  $\Phi(x)=0$ for every $x\in \R^n\setminus B_{2\sqrt{n}}$.
\item $\Phi(x)>2$ for every $x\in Q_3$.
\item There exists a bounded, nonnegative function $\psi:\R^n\to\R$ (depending on $R$), supported in $\overline{B_R}$, such that $M^-_{\A_{sec}}\Phi(x)\geq -\psi(x)$ for every $x\in\R^n$ and every $\alpha\in(\alpha_0,2)$.
\end{enumerate}
\end{cor}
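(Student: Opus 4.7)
The plan is to build $\Phi$ as a rescaled, normalized, and truncated version of the radial function $f_{\gamma_1,p_0}$ provided by Lemma \ref{SpecialLem:GammaSmall}, which satisfies $M^-_{\A_{sec}}f_{\gamma_1,p_0}(x)\geq 0$ for every $|x|>1$ and every $\alpha\in(\alpha_0,2)$. All three properties will be tuned simultaneously by a single amplitude $A$ chosen at the end.

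First I would rescale: set $\tilde f(x)=A\,f_{\gamma_1,p_0}(x/R)$. Because the cones $V_\xi$ are $1$-homogeneous, the class $\A_{sec}$ is stable under the change of variable $K(h)\mapsto R^{n+\alpha}K(Rh)$, and the scaling formula
\[
M^-_{\A_{sec}}\tilde f(x)=A R^{-\alpha}\,M^-_{\A_{sec}}f_{\gamma_1,p_0}(x/R)
\]
yields $M^-_{\A_{sec}}\tilde f(x)\geq 0$ whenever $|x|>R$, uniformly in $\alpha\in(\alpha_0,2)$. Next I truncate: define
\[
\Phi(x)=
\begin{cases}
\tilde f(x)-\tilde f(2\sqrt n), & x\in\overline{B_{2\sqrt n}},\\
0, & x\in\R^n\setminus B_{2\sqrt n}.
\end{cases}
\]
Since $\tilde f$ is radially decreasing, $\Phi$ is continuous, nonnegative, and supported in $\overline{B_{2\sqrt n}}$, which gives (1). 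For (2), the inclusion $Q_3\subset B_{3\sqrt n/2}$ and the explicit form $\tilde f(x)=A|x/R|^{-p_0}$ in the tail region give
\[
\Phi(x)\geq A R^{p_0}\bigl[(3\sqrt n/2)^{-p_0}-(2\sqrt n)^{-p_0}\bigr]\qquad\text{on }Q_3,
\]
and choosing $A$ large (depending on $n,p_0,R$) forces the right-hand side above $2$.

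For (3), the auxiliary function $v(x):=\tilde f(x)-\tilde f(2\sqrt n)$ satisfies $v\leq\Phi$ globally, with equality on $\overline{B_{2\sqrt n}}$. Therefore, at every $x\in B_{2\sqrt n}$,
\[
\del^2_h\Phi(x)\geq\del^2_h v(x)=\del^2_h\tilde f(x),
\]
so $L_K\Phi(x)\geq L_K\tilde f(x)$ for each $K\in\A_{sec}$, whence $M^-_{\A_{sec}}\Phi(x)\geq M^-_{\A_{sec}}\tilde f(x)\geq 0$ for every $R<|x|\leq 2\sqrt n$. For $|x|>2\sqrt n$ we have $\Phi(x)=0$ and $\Phi\geq 0$, so $\del^2_h\Phi(x)\geq 0$ and the bound is trivial. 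Only $\overline{B_R}$ remains, where $\Phi$ differs from the smooth $A\,f_{\gamma_1,p_0}(\cdot/R)$ by a constant and is globally bounded; Lemma \ref{lem:ContinuityOfF} together with the $(2-\alpha)$ weight built into $\A_{sec}$ yield a bound for $x\mapsto M^-_{\A_{sec}}\Phi(x)$ on $\overline{B_R}$ that is uniform in $\alpha\in(\alpha_0,2)$. Setting
\[
\psi(x)=\Indicator_{\overline{B_R}}(x)\,\sup_{\alpha\in(\alpha_0,2)}\bigl(M^-_{\A_{sec}}\Phi(x)\bigr)^-
\]
produces the required bounded nonnegative function supported in $\overline{B_R}$.

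The main obstacle is reconciling the three requirements when $R$ is small: the inner-region rescaling compresses the ``good zone'' of $f_{\gamma_1,p_0}$ into $B_R$, which is necessary to localize $\psi$, yet shrinks $\tilde f$ on $Q_3$, forcing the amplitude $A$ to compensate. The truncation is the other delicate point, because a naive cut-off could introduce spurious negative second differences; the resolution is to truncate by zero \emph{above} the comparison function $v$, which is precisely the sign needed for $\del^2_h\Phi\geq\del^2_h\tilde f$ to hold at interior points, preserving the lower bound on $M^-_{\A_{sec}}\Phi$.
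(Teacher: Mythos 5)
Your proof is correct and follows essentially the same strategy as the paper: rescale the special function from Lemma \ref{SpecialLem:GammaSmall} so that the defect region lands inside $B_R$, truncate by taking a maximum with zero, and amplify so the function exceeds $2$ on $Q_3$; the truncation $\Phi=\max\{\tilde f-\tilde f(2\sqrt n),0\}$ and your direct comparison $\del^2_h\Phi\geq\del^2_h v$ are just an explicit unpacking of the paper's ``max of subsolutions is a subsolution'' step. One small but genuine point in your favor: you subtract $\tilde f(2\sqrt n)=AR^{p_0}(2\sqrt n)^{-p_0}$, which puts the truncation interface exactly at $\partial B_{2\sqrt n}$ for every $R\in(0,1]$, whereas the constant $(2\sqrt n)^{-p_0}$ written in the paper's proof would make the truncated function vanish already at $|x|=2\sqrt n\,R$ and thus fail property (2) on $Q_3$ whenever $R<3/4$; your constant is the one that actually works.
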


\begin{proof}[Proof of Corollary \ref{SpecialCor:special_func}]
We begin by noting that by construction, $f_{\gam_0,p_0}$ is $C^{1,1}(\real^n)$.  We therefore have that given any $0<R<1$ fixed, the function $\displaystyle f_R(x):= f(\frac{x}{R})$ satisfies classically by a rescaling of $M^-_{\A_{sec}}f(x)\geq0$ in $\{\abs{x}\geq1\}$,
	\begin{equation}
		M^-_{\A_{sec}}f_R(x)=R^{-\al}M^-_{\A_{sec}}f(\frac{x}{R})\geq0\ \ \text{whenever}\ \ \abs{x}\geq R.
	\end{equation}

Next we can subtract the constant $(2\sqrt{n})^{-p_0}$. Since constants are subsolutions of $M^-_{\A_{sec}}\geq0$ and the maximum of two subsolutions is still a subsolution, we see that
\begin{equation}
	\tilde f_R = \max\{f_R-(2\sqrt{n})^{-p_0},0\}
\end{equation}	
	still satisfies $M^-_{\A_{sec}}\tilde f_R\geq0$ in the set $\abs{x}\geq R$.
	
	Finally to conclude, we choose $c$ large enough so that
	\begin{equation}
		\Phi(x):= c \max\{f_R(x)-(2\sqrt{n})^{-p_0},0\}>2 \ \ \text{for all}\ x\in Q_{3}.
	\end{equation}
	To conclude we comment that by \autoref{lem:ContinuityOfF}, $M^-_{\A_{sec}}\Phi$ is continuous in $\real^n$, and hence $\psi$ is continuous.
\end{proof}


\section{Point To Measure Estimates and H\"older Regularity} \label{sec:PointEsti}

This section contains the main auxiliary result, \autoref{lem:point_estimate}, which is the key to \autoref{thm:holder}. The proof of \autoref{lem:point_estimate} uses the main contributions of this article, \autoref{thm:cubecover} and \autoref{SpecialCor:special_func}.  Once \autoref{lem:point_estimate} is established, a-priori Hölder regularity estimates follow by the classical method of oscillation reduction. 

\begin{lem}\label{lem:point_estimate}
There exist constants $\varepsilon_0>0$, $\kappa\in(0,1)$ and $A>1$ (depending only on $\lambda$, $\Lambda$, $n$, $\delta$ and $\alpha_0$) such that for every $\alpha\in(\alpha_0,2)$ and every bounded function $w:\R^n\to\R$ which is lower semicontinuous in $\overline{Q_{4\sqrt{n}}}$ and satisfies
\begin{enumerate}
 \item $w\geq 0$ in $\R^n$ ,
\item $\inf\limits_{Q_3}w\leq 1$, and
\item $M^-_{\A_{sec}}w\leq\varepsilon_0$ in $Q_{4\sqrt{n}}$ in the viscosity sense,
\end{enumerate}
we have \begin{align}\label{blabla}\bet{\{w\leq A\}\cap Q_1}\geq\kappa.\end{align}
\end{lem}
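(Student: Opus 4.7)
My plan is to follow the standard Caffarelli--Silvestre blueprint for point-to-measure estimates (cf.~\cite[Lemma 10.1]{CaSi-09RegularityIntegroDiff}), combining the ABP cube-cover substitute (\autoref{thm:cubecover}, \autoref{cor:nonlocalABP}) with the special bump function of \autoref{SpecialCor:special_func} to work inside the class $\A_{sec}$.

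\textbf{Auxiliary subsolution.} Fix $R\in(0,1]$ (to be chosen later) and let $\Phi,\psi$ be as in \autoref{SpecialCor:special_func}. Set $v:=\Phi-w$. Since $w\geq 0$ globally and $\Phi\equiv 0$ outside $B_{2\sqrt n}$, we have $v\leq 0$ off $B_{2\sqrt n}$; lower semicontinuity of $w$ on $\overline{Q_3}$ together with $\Phi>2$ on $Q_3$ gives $\sup_{\overline{Q_3}} v\geq 1$. If $\phi\in C^{1,1}(x_0)$ touches $v$ from above at $x_0\in Q_{4\sqrt n}$, then $\Phi-\phi$ touches $w$ from below at $x_0$, so hypothesis (3) forces $M^-_{\A_{sec}}(\Phi-\phi)(x_0)\leq\varepsilon_0$. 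The concavity of $M^-_{\A_{sec}}$ (infimum of linear operators, whence $M^-(a+b)\geq M^-a+M^-b$ and $M^-(-b)=-M^+b$) combined with the pointwise bound $M^-_{\A_{sec}}\Phi\geq -\psi$ from \autoref{SpecialCor:special_func} then yields $M^+_{\A_{sec}}\phi(x_0)\geq M^-_{\A_{sec}}\Phi(x_0)-\varepsilon_0\geq -(\psi(x_0)+\varepsilon_0)$. Hence $v$ is a viscosity subsolution of $M^+_{\A_{sec}} v\geq -(\psi+\varepsilon_0)$ in $Q_{4\sqrt n}$.

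\textbf{Cube cover and universal upper bound on $w$.} Rescale by $c:=2\sqrt n$: $\tilde v(x):=v(cx)$ satisfies $\tilde v\leq 0$ outside $B_1$, obeys $\sup_{B_1}\tilde v^+\geq 1$ (the value being attained in $\overline{Q_{3/c}}\subset B_{3/4}$), and is a subsolution of $M^+_{\A_{sec}}\tilde v\geq -\tilde f$ in $B_1$ with $\tilde f(x):=c^\alpha(\psi(cx)+\varepsilon_0)\leq M_0$ uniformly in $\alpha\in(\alpha_0,2)$ for a universal $M_0$. Applying \autoref{thm:cubecover} produces disjoint open cubes $\{\tilde Q^j\}_{j=1}^m$ of diameters $d_j\leq\rho_0$ covering the contact set, and \autoref{cor:nonlocalABP} yields
\[
1\leq C_3\biggl(\sum_{j=1}^m(\sup\nolimits_{\overline{\tilde Q^j}}\tilde f)^n\,|\tilde Q^j|\biggr)^{1/n}.
\]
Property~(3) of \autoref{thm:cubecover} combined with $\tilde\Gamma\geq 0$ and $d_j\leq\rho_0\leq 1$ says that on a subset of measure $\geq\nu|\eta\tilde Q^j|$ of each $\eta\tilde Q^j$ one has $\tilde v\geq -C_2 M_0\rho_0^2$, and therefore $\tilde w=\tilde\Phi-\tilde v\leq\norm{\Phi}_{L^\infty}+C_2M_0\rho_0^2=:A$ there, with $A$ universal.

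\textbf{Splitting the cubes and concluding.} Partition $\{\tilde Q^j\}$ into group I ($\overline{\tilde Q^j}\cap\Supp(\tilde\psi)\neq\emptyset$) and group II (all others), where $\Supp(\tilde\psi)\subset B_{R/c}$. On group II, $\sup\tilde f\leq c^\alpha\varepsilon_0$, so the group II contribution to the ABP sum is bounded by a universal multiple of $\varepsilon_0^n$. Choose $\varepsilon_0$ small enough that this is $\leq(2C_3)^{-n}$; the group I contribution is then $\geq(2C_3)^{-n}$, and since $\sup\tilde f\leq M_0$ on group I this forces $\sum_{j\in\mathrm{I}}|\tilde Q^j|\geq(2C_3M_0)^{-n}=:\mu>0$. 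Each group I cube, together with its $\eta$-enlargement, lies in $B_{R/c+\eta\rho_0\sqrt n}$; choosing $R$ sufficiently small (using $\rho_0\leq l/(16n)$) makes this neighborhood sit inside $Q_{1/c}$. Summing good portions across group I gives $|\{\tilde w\leq A\}\cap Q_{1/c}|\geq\nu\eta^n\mu$, and undoing the rescaling produces $|\{w\leq A\}\cap Q_1|\geq\kappa:=c^n\nu\eta^n\mu>0$.

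The main obstacle is the simultaneous balancing of $R$, $\rho_0$, and $\varepsilon_0$ so that (a) group I cubes together with their $\eta$-enlargements localize inside $Q_1$, (b) the ABP lower bound is sourced from group I rather than group II, and (c) all resulting constants depend only on $n,\lambda,\Lambda,\alpha_0,\delta$ and stay stable as $\alpha\to 2^-$. The last requirement is exactly the design goal of \autoref{thm:cubecover} and \autoref{SpecialCor:special_func}; with these tools in place, the smaller-sector class $\A_{sec}$ adds no new difficulty beyond the parameter bookkeeping.
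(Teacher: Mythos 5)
Your architecture is the same as the paper's (rescale to $B_1$, run \autoref{thm:cubecover}/\autoref{cor:nonlocalABP} on $v=\Phi-w$, use the support of $\psi$ to localize the cubes that matter, then read off a measure lower bound from property (3) of \autoref{thm:cubecover}), and your derivation that $v$ is a subsolution of $M^+_{\A_{sec}}v\geq -(\psi+\varepsilon_0)$ via $M^-(a-b)\geq M^-a - M^+b$ is correct. The group~I/group~II splitting to extract $\sum_{j\in\mathrm{I}}|\tilde Q^j|\geq\mu$ is a clean reformulation of the paper's choice $\varepsilon_0=1/(2c_1)$ and is fine.

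However, the final step has a genuine gap. You pass from $|\{y\in\eta\tilde Q^j : \tilde w(y)\leq A\}|\geq\nu|\eta\tilde Q^j|$ and $\sum_{j\in\mathrm{I}}|\tilde Q^j|\geq\mu$ directly to $|\{\tilde w\leq A\}\cap Q_{1/c}|\geq\nu\eta^n\mu$, i.e.\ you sum the measures of the good subsets. But the $\tilde Q^j$ being pairwise disjoint does \emph{not} imply the dilates $\eta\tilde Q^j$ have bounded overlap: a Calder\'on--Zygmund family can contain cubes of wildly different scales accumulating near a point, and then a single point may lie in arbitrarily many of the $\eta$-dilates. Without bounded overlap, $\sum_j|G_j|$ can vastly overcount $\bigl|\bigcup_j G_j\bigr|$, so your inequality does not follow. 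The paper handles this by a covering argument: extract a subfamily $\mathcal{Q}'\subset\{\eta Q^j\}$ which still covers $U=\bigcup_{j}\overline{Q^j}$ but whose overlap number $\omega(\mathcal{Q}')$ is bounded by a dimensional constant independent of $m$; then $\bigl|\bigcup G_j\bigr|\geq\omega(\mathcal{Q}')^{-1}\sum_{Q'\in\mathcal{Q}'}|G_{j'}|$ and the argument closes, with the overlap constant absorbed into $\kappa$. You need to insert this step.

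A secondary issue: you quote only $d_j\leq\rho_0$, discarding the $2^{-1/(2-\alpha)}$ factor in \autoref{thm:cubecover}. With $\rho_0$ taken at its maximal allowed value $l/(16n)$, the containment $\eta\tilde Q^j\subset Q_{1/c}$ is \emph{false}: one computes $(1+\eta)\rho_0/2\geq \tfrac{1}{4\sqrt n}$, which already saturates the inscribed-ball radius of $Q_{1/c}$ before you even add the $R/c$ term, and decreasing $R$ does not help. Either retain the $2^{-1/(2-\alpha)}\leq 2^{-1/2}$ factor (the paper does exactly this, recording $d_j\leq l/(8\sqrt{2n})$ before checking $\eta Q^j\subset B_{1/2}$), or shrink $\rho_0$ further. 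As written, the localization step does not close.

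Both issues are repairable by following the paper's bookkeeping, but as submitted the proof is incomplete.
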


\begin{proof}
The proof uses the same strategy as the one of \cite[Lemma 10.1]{CaSi-09RegularityIntegroDiff}. Here, the size of the cubes used in the localization argument depends on $\delta$. Let $l\in(0,\tfrac{1}{2})$ be as in \autoref{Lem8.4anyball}. Set $R=\frac{l}{8}$ and $u=\Phi-w$, where $\Phi$ is the special function constructed in \autoref{SpecialCor:special_func}. Let us summarize properties of $u$. 
\begin{itemize}
 \item $u$ is upper semicontinuous in $\overline{B_{2\sqrt{n}}}$\,,
 \item $u\leq 0$ in $\R^n\setminus B_{2\sqrt{n}}$\,,
 \item For every $\alpha \in (\alpha_0,2)$ $M_\alpha^+u\geq M_\alpha^-\Phi-M_\alpha^-w\geq -\psi-\varepsilon_0$ in $Q_{4\sqrt{n}}\supset B_{2\sqrt{n}}$ in the viscosity sense, where $\psi:\R^n\to\R$ is as in \autoref{SpecialCor:special_func}.
\end{itemize}
Let $\Gamma$ be the concave envelope of $u^+$ in $B_{6\sqrt{n}}$. Next, we apply rescaled versions of \autoref{thm:cubecover} and \autoref{cor:nonlocalABP} with $\rho_0 =2\sqrt{n} \frac{l}{16 n} =\frac{l}{8\sqrt{n}}$. Note that \autoref{thm:cubecover} is formulated for subsolutions in $B_1$. Using a scaling argument, the assertion remains true  when considering subsolutions in $B_{2\sqrt{n}}$ as here. Let  $(Q^j)_{j=1,\ldots,m}$ be the family of cubes in the rescaled version of \autoref{thm:cubecover}, i.e. with diameters $d_j \leq \frac{l}{8\sqrt{n}} 2^{-1/(2-\alpha)}$. From \autoref{cor:nonlocalABP} we conclude 
\begin{align*}\sup_{B_{2\sqrt{n}}}u\leq C_3\biggl(\sum_{j=1}^m(\sup_{\overline{Q^j}}\psi+\varepsilon_0)^n\bet{Q^j}\biggr)^{1/n}\leq c_1\varepsilon_0+c_1\biggl(\sum_{j=1}^m(\sup_{\overline{Q^j}}\psi)^n\bet{Q^j}\biggr)^{1/n},
\end{align*}
where $c_1=c_1(\lambda,\delta,n) \geq C_3$. The properties $\inf\limits_{Q_3} w \leq 1$ and $\Phi> 2$ in $Q_3$  imply $\sup\limits_{B_{2\sqrt{n}}} u \geq 1$. Set $\varepsilon_0 = \frac{1}{2c_1}$. Since $\psi$ is supported in $\overline{B_R}$, we obtain
\[\frac{1}{2}\leq c_1 \sup_{B_R}\bet{\psi} \biggl(\sum_{\substack{j=1,\ldots,m\\Q^j\, \cap \,B_{R}\neq\emptyset}}\bet{Q^j}\biggr)^{1/n}.\]
Hence there is $c_2 = c_2(\delta,\lambda,\Lambda,\alpha_0,n) >0$ such that 
\begin{align}
\label{boundbelowc}
\sum_{\substack{j=1,\ldots,m\\ Q^j\, \cap \,B_{R}\neq\emptyset}}\bet{Q^j}\geq c_2.
\end{align}
Set $c_3 =C_2\bigl(\sup_{\R^n}(\psi+\varepsilon_0)\bigr)$ with $C_2$ from \autoref{thm:cubecover} which we now apply: There is $\nu>0$ such that for every $j=1,\ldots, m$
\begin{align} \label{Anotherbound2}
\bigl|\{y\in\eta Q^j : \, u(y)\geq \Gamma(y)-c_3 d_j^2\}\bigr| \geq \nu\bet{\eta Q^j}\,.
\end{align}

Let us consider the family 
\[\mathcal{Q}=\{\eta Q^j : Q^j\cap B_{R}\neq\emptyset,\, j\in\{1,\ldots,m\}\},\]
which is an open covering of the set \[ U =\bigcup_{\substack{j=1,\ldots,m \\Q^j\cap B_{R}\neq\emptyset}} \overline{Q^j}.\]

Given a finite family $\mathcal{A}= \{ A_i : 1 \leq i \leq N \}$ of sets $A_i \subset \R^n$ we define the overlapping number by $\omega(\mathcal{A}) = \max\limits_{1 \leq i \leq N} \# \{A\in \mathcal{A} : A \cap A_i \ne \emptyset\}$. In general, $\omega(\mathcal{Q})$ depends on $m$. However, by a simple covering argument there is a subfamily $\mathcal{Q}'$ of $\mathcal{Q}$ that still covers $U$ but with $\omega(\mathcal{Q}')$ independent of $m$. 

Note that the diameters $d_j$ of the cubes $Q^j$ satisfy $d_j \leq \frac{l}{8\sqrt{2n}}$. Furthermore, $Q^j \cap B_R \ne \emptyset$ implies $\eta Q^j \subset B_{1/2}$ due to the choice of $\rho_0$ and because of $\eta=(1+\frac{8}{l})\sqrt{n}$. Thus it follows from \eqref{boundbelowc} and \eqref{Anotherbound2} that
\begin{align}\label{entschung}
\bet{\{y\in B_{1/2}: \, u(y)\geq \Gamma(y)-c_3 \rho_0^2\}}\geq \kappa,
\end{align}

where $\kappa\in(0,1)$ only depends on $\Lambda,\lambda, \delta,\alpha_0$ and $n$.  Let $A_0=\sup_{B_{1/2}}\Phi$. Since for $y \in B_{1/2}$ $u(y)\geq \Gamma(y)-c_3 \rho_0^2$ implies $w(y)\leq A_0+c_3 \rho_0^2$, we obtain from  \eqref{entschung} 
\[\bet{\{y\in B_{1/2} : \, w(y)\leq A_0+c_3 \rho_0^2\}}\geq \kappa.\]
Set $A=A_0+c_3 \rho_0^2$. Since $B_{1/2}\subset Q_1$, we have
\[\bet{\{y\in Q_1 : \, w(y)\leq A\}}\geq \kappa,\]
which finishes the proof.  
\end{proof}

The typical presentation of H\"older regularity from the point-to-measure estimates proceeds via the oscillation reduction lemma of De Giorgi, which actually uses the Harnack inequality (see \cite[Chapter 4]{CaCa-95}, \cite[Chapters 8, 9]{GiTr-98}).  However as mentioned in Section \ref{sec:Background}, the Harnack inequality fails for operators in the class $\A_{sec}$, and so some care must be used.  That is why we choose to cite the direct methods presented in \cite[Section 12]{CaSi-09RegularityIntegroDiff}.  There it is directly shown how \autoref{lem:point_estimate} implies \autoref{thm:holder}.  Furthermore once \autoref{lem:point_estimate} is established, the particular properties of $\A_{sec}$ are no longer relevant.  Thus we conclude \autoref{thm:holder}.


\section{Appendix} \label{sec:Appendix}

\subsection{Failure of Harnack Inequality}\label{sec:HarnackFail}

We include here some of the details of the counterexample of the Harnack inequality from \cite{BoSz05}.

\begin{ex}[{\cite[p.148]{BoSz05}}]\label{AppendixExample:HarnackFail}
For $k \in \N$ let $I_k$ be any set of the form 
\[
I_k = \big( B_{4^{-k}}(\xi_k) \cup B_{4^{-k}}(-\xi_k) \big) \cap S^{n-1},
\]
 where $\xi_k \in \sphere^{n-1}$ are chosen such that the balls $B_{2^{-k}}(\xi_k)$ are mutually disjoint. Set 
\[
S = \{h \in \R^n| \, \tfrac{h}{\bet{h}} \in \bigcup_{k\geq 1} I_k\},
\]
and
\[
K(h) = \Ind{S}(h) \bet{h}^{-n-\alpha}
\]
for $\bet{h}\ne 0$ and any fixed $\alpha \in (0,2)$. Then it is shown in \cite{BoSz05} that solutions $u$ to $Lu=0$ with $L$ as in \eqref{IntroEq:LinearK} do not need to satisfy a Harnack inequality.  This set $S$ allows to find a sequence of sets $A_m$ and also a sequence of points $x_m$ such that the probability of exiting $B_1$ and landing in $A_m$ from $x_m$ is significantly different than starting at $0$.  Specifically, if $X$ is the stochastic process generated by this particular $L$, $\tau_{B_1}$ is the exit time from $B_1$, and 
\[
u_m(x)=\P^x(X_{\tau_{B_1}}\in A_m),
\]
then one obtains 
\[
Lu_m=0\ \text{in}\ B_1
\]
and
\begin{equation*}
	\frac{u_m(0)}{u_m(x_m)}\to\infty\ \text{as}\ m\to\infty,
\end{equation*}
for an appropriately chosen sequence $x_m$.
The main ideas behind the construction are similar to those of the counterexample for the case of singular measures presented in \cite[Section 3]{BassChen-2010HarmonicStableLike}. 
\end{ex}

In light of \autoref{AppendixExample:HarnackFail}, it seems pertinent to point out where the proof of the Harnack inequality for $\A_{CS}$ (see (\ref{AppendixEq:ACS}) below) from \cite[Theorem 11.1]{CaSi-09RegularityIntegroDiff} fails for $\A_{sec}$.  A key step in \cite[Theorem 11.1]{CaSi-09RegularityIntegroDiff} is to find an equation satisfied by 
\[
w(x) = (cu(x_0)-u(x))^+,
\]
for some appropriate $c$, given that $M^-_{\A_{CS}}u\leq 1$ in $B_1$ (plus some other properties).  The details for the arguments we focus on appear in \cite[p.629,630]{CaSi-09RegularityIntegroDiff}.  In the top half of p.630, a bound at one point, $x_1$, is found on $M^-_{\A_{CS}}u(x_1)$, and then it is used via a shift in the integration variables to estimate $M^-_{\A_{CS}}w(x)$ at a different $x$.  In order that the steps on p.630 would hold for the case of $K\in\A_{sec}$, one would basically need an estimate of the form
\[
\frac{K(z-\hat x)}{K(z)}\leq \tilde c r^{-n-\al},\ \text{for all}\ z\in \real^n\setminus B_r(\hat z),
\]
where $\hat x \not = \hat z$ are certain special points.  For $K\in\A_{sec}$, this will not hold in general because there is no way to rule out the possibility that $K(z)=0$ when $K(z-\hat x)\not =0$, and thus the argument in \cite[p.630]{CaSi-09RegularityIntegroDiff} will not carry over to the case of $\A_{sec}$.  Given \autoref{AppendixExample:HarnackFail}, it is fair to say this obstruction is permanent.

\subsection{Pointwise evaluation for general $F$}\label{sec:PointwiseExamples}

We discuss in this section what one can expect from the regularity imposed on subsolutions to e.g. (\ref{IntroEq:FullyNonlinear}) by the definition of viscosity solutions at points where they can be touched from above by a test function.  It is not true in general that the very convenient result, \cite[Lemma 3.3]{CaSi-09RegularityIntegroDiff}, which gives
\begin{equation}\label{AppendixEq:DeluIntegrable}
	\int_{\real^n}\abs{\del^2_h u(x)}\abs{h}^{-n-\al}dh < \infty,
\end{equation}
still holds when one considers classes of $K$ more general than those of \cite{CaSi-09RegularityIntegroDiff}, such as $\A_{sec}$.
The formula (\ref{AppendixEq:DeluIntegrable}) seems to match with what is expected in the second order case, but is not a good guide for the integro-differential case.  Indeed let $\M^+$ be the Pucci maximal operator of the second order theory (see \cite[Chapter 2]{CaCa-95}) and in the viscosity sense (for some bounded domain, $\Omega$)
\[
\M^+u\geq -f\ \textnormal{in}\ \Omega.
\]
Then whenever $u-\phi$ has a global max at $x\in\Omega$, we have for the eigenvalues of $D^2u(x)$, $\{e_1,\dots,e_n\}$,
\begin{align}
	\lam\sum_{e_i<0} e_i &+ \Lam\sum_{e_j>0} e_j \geq -f(x)\nonumber\\
	\intertext{which implies}
	f(x) + \Lam\norm{\phi}_{C^2} &\geq \lam\sum_{e_i<0} (-e_i).\label{AppendixEq:SecondOrderBounds}
\end{align} 
From this we deduce that $u$ is also $C^{1,1}$ from below at $x$ with a bound which depends on $f$, $\Lam$, $\phi$.  However, as we show, even an analog of this to an integrated quantity such as (\ref{AppendixEq:DeluIntegrable}) is too much to ask in the general integro-differential setting.  The more generic analog to (\ref{AppendixEq:SecondOrderBounds}) is \autoref{cor:DeltaUMinusIntegrable} below.

To make this precise, let 
\begin{equation}\label{AppendixEq:ACS}
	\A_{CS} = \left\{ K:\real^n\to\real\ :\ \frac{\lam(2-\al)}{\abs{h}^{-n-\al}}\leq K(h)\leq \frac{\Lam(2-\al)}{\abs{h}^{-n-\al}}\right\}.
\end{equation}
Then without assuming $u$ satisfies any equation, we have the more straightforward observation
\begin{note}\label{note:CSGoodIntegral}
	If $\phi\in C^{1,1}(x)\cap L^\infty(\real^n)$, $u-\phi$ has a global max at $x$, and $M^+_{\A_{CS}}u(x)\geq0$ classically, then
	\begin{equation*}
		\int_{\real^n}(\del^2_h u(x))^-\abs{h}^{-n-\al}dh \leq \frac{\Lam}{\lam}\int_{\real^n}(\del^2_h\phi(x))^+\abs{h}^{-n-\al}dh \leq \frac{C(n,\al)\Lam}{\lam}C(\phi).
	\end{equation*}
\end{note}

We now present an example which illustrates the failure of \autoref{note:CSGoodIntegral} and puts in contrast $M^+_{\A_{CS}}$ to $M^+_{\A_{sec}}$.
\begin{ex}\label{AppendixExample:IntegrabilityFail}
	Let $n=2$ and $\al>1$.  Define $u$ as 
	\begin{equation}\label{AppendixEq:ExampleDef}
		u(x) = -\abs{x}\phi\left(\theta\left(\frac{x}{\abs{x}}\right)\right).
	\end{equation}
	Here $\theta(\frac{x}{\abs{x}})\in[-\pi,\pi)$ is the angle of $\frac{x}{\abs{x}}$, and  $\phi$ is a smooth \emph{even} angular cutoff such that
	\begin{equation*}
		\phi(\theta)=
		\begin{cases}
			1 &\text{if}\ \abs{\theta}\in[0,\frac{\pi}{6}]\union [\frac{5\pi}{6},\pi]\\
			\text{smooth and monotone}\ &\text{if}\ \abs{\theta}\in[\frac{\pi}{6},\frac{\pi}{4}]\union [\frac{3\pi}{4},\frac{5\pi}{6}]\\
			0 &\text{if}\ \abs{\theta}\in [\frac{\pi}{4},\frac{3\pi}{4}].
		\end{cases}
	\end{equation*}
	Let $e_1,e_2$ be the canonical basis vectors in $\real^2$ and  $V_1$, $V_2$ be the sectors
	\begin{equation*}
	V_1 = \left\{ z\ :\ \abs{\ska{\frac{z}{\abs{z}},e_1}}\geq\frac{\sqrt{3}}{2} \right\}\ \text{and}\	V_2 = \left\{ z\ :\ \abs{\ska{\frac{z}{\abs{z}},e_2}}\geq\frac{\sqrt{3}}{2} \right\}.
	\end{equation*}
	We now list-- but leave to the reader to check-- some illustrative properties of $u$:
	
	\begin{itemize}
		\item $\displaystyle u-0\ \text{has a global max at}\ x=0$
		\item $\displaystyle M^+_{\A_{sec}}u(0) = 0\ \text{by using}\ K(h)=\Indicator_{V_2}(h)\abs{h}^{-2-\al}$
		\item $\displaystyle M^-_{\A_{sec}}u(0) = -\infty\ \text{by using}\ K(h)=\Indicator_{V_1}(h)\abs{h}^{-2-\al}$
		\item $\displaystyle \int_{\real^2}(\del^2_h u(x))^- \abs{h}^{-2-\al}dh= +\infty$.
	\end{itemize}
	
\end{ex}

We now state the pointwise evaluation results of \autoref{sec:subsecPointWiseEvaluation} for a general class of kernels.  We hope these results can be useful elsewhere in the theory.  Assume that $\A$ is a class of kernels such that at least
\begin{equation}\label{AppendixEq:KernelUpperBound}
	K(h)\leq \Lam\abs{h}^{-n-\al}\ \text{for all}\ h\in\real^n.
\end{equation}
Then it is immediate that:

\begin{note}\label{AppendixNote:UniformlyIntegrable}
	If $\A$ satisfies (\ref{AppendixEq:KernelUpperBound}) then for each $R>0$ $\displaystyle \left\{\min(\abs{h}^2,1)K(h)\ :\ K\in \A\right\}$ is a uniformly integrable family of kernels on $B_R$.
\end{note}

Given \autoref{AppendixNote:UniformlyIntegrable}, the proof of \autoref{prop:PointwiseEvaluation} (via \autoref{lem:FormulaForMPlus}) can be directly adapted to the case of $M^+_{\A}$. Furthermore, the pointwise evaluation also holds for any $F$ which can be written as an $\inf-\sup$ (\autoref{prop:GeneralPointwiseEvalF}).

Indeed, if $K^*_m$ are optimizers (or achieve within $\ep$ of the supremum) for $v_m$ (as in \autoref{prop:PointwiseEvaluation}) then by \autoref{AppendixNote:UniformlyIntegrable} there is a subsequence of $\min(\abs{h}^2,1)K^*_m(h)$ which converges pointwise a.e. $x$ to some $K\in\A$.  Then we can look at the pointwise convergence of
\[
\frac{\del^2_h v_m(x)}{\min(\abs{h}^2,1)}\min(\abs{h}^2,1)K^*_m(h)
\]
to conclude.  This discussion proves \autoref{prop:GeneralPointwiseEvalMPlus}, and minor modifications yield \autoref{prop:GeneralPointwiseEvalF}.

\begin{prop}\label{prop:GeneralPointwiseEvalMPlus}
	Assume that all $K\in\A$ satisfy (\ref{AppendixEq:KernelUpperBound}). Then the assertion of \autoref{prop:PointwiseEvaluation} remains true for $M^+_{\A_{sec}}$ replaced by $M^+_\A$.
\end{prop}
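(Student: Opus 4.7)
The plan is to follow the proof of \autoref{prop:PointwiseEvaluation} essentially verbatim, with the general class $\A$ replacing $\A_{sec}$ throughout. After normalizing so that $u(x)=\phi(x)$, I would define the truncations
\[
\phi_r(y) = \begin{cases} \phi(y) & \text{if } y \in B_r(x), \\ u(y) & \text{if } y \in \real^n \setminus B_r(x), \end{cases}
\]
so that $u-\phi_r$ again attains a global maximum at $x$, with $\phi_r \in C^{1,1}(x)\cap L^\infty(\real^n)$ uniformly in $r$. The viscosity subsolution hypothesis then yields $M^+_\A \phi_r(x)\geq -f(x)$ for every $r>0$. Moreover, for any fixed $h\neq 0$ and all $r<|h|$ one has $\phi_r(x\pm h)=u(x\pm h)$ and $\phi_r(x)=u(x)$, so $\del^2_h \phi_r(x)=\del^2_h u(x)$ eventually; in particular $\del^2_h \phi_r(x)\to \del^2_h u(x)$ pointwise in $h$. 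The proposition will follow once I establish the general-$\A$ analogue of \autoref{PointwiseLem:MPlusUSC}, namely
\[
\limsup_{r\to 0} M^+_\A \phi_r(x)\leq M^+_\A u(x).
\]

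To prove this upper semi-continuity, I would fix a sequence $r_m\to 0$ and pick near-optimizing kernels $K^*_m\in\A$ with
\[
\int_{\real^n} \del^2_h \phi_{r_m}(x) K^*_m(h)\, dh \geq M^+_\A \phi_{r_m}(x) - \tfrac{1}{m}.
\]
The pointwise bound \eqref{AppendixEq:KernelUpperBound} together with \autoref{AppendixNote:UniformlyIntegrable} supplies the uniform-in-$m$ domination $\min(|h|^2,1) K^*_m(h)\leq \Lam \min(|h|^2,1)|h|^{-n-\al} \in L^1(\real^n)$. A standard compactness extraction (addressed below) then delivers a subsequence, not relabeled, with $K^*_m\to K^*$ pointwise a.e., and $K^*\in\A$ by closure of the class. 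Using the $C^{1,1}(x)$ property of $\phi$ together with the uniform $L^\infty$ bound on $\phi_{r_m}$, one obtains the uniform pointwise estimate $(\del^2_h \phi_{r_m}(x))^+ \leq C \min(|h|^2,1)$, so the integrand $-\del^2_h \phi_{r_m}(x) K^*_m(h)$ is bounded below by a fixed $L^1$ function. Fatou's lemma applied to this bounded-below sequence then yields
\[
\limsup_m \int \del^2_h \phi_{r_m}(x) K^*_m(h)\, dh \leq \int \del^2_h u(x) K^*(h)\, dh \leq M^+_\A u(x),
\]
and combined with the near-optimality of $K^*_m$ this completes the USC step. Passing to the limit in $M^+_\A \phi_r(x)\geq -f(x)$ finally gives $M^+_\A u(x)\geq -f(x)$.

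The main obstacle lies in the extraction of a pointwise a.e.\ convergent subsequence of $\{K^*_m\}$ with limit in $\A$. Uniform integrability alone only provides weak-$L^1$ compactness via Dunford-Pettis, which is insufficient for passing to the limit inside the product with the pointwise-convergent factor $\del^2_h \phi_{r_m}(x)$. In the concrete classes of interest -- such as $\A_{sec}$, where each kernel is parametrized by a direction $\xi\in\sphere^{n-1}$ on top of the pointwise ellipticity bounds -- compactness of the parameter space produces the required a.e.\ convergence directly, and this is the form in which the result will actually be used. For a general abstract class one can instead appeal to Koml\'os' theorem or Chacon's biting lemma to extract an a.e.\ convergent subsequence (possibly after Ces\`aro averaging), working under the implicit assumption that $\A$ is closed under this mode of convergence; once such a subsequence is in hand the Fatou argument above is routine.
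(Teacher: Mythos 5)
Your proposal follows the same route as the paper's own sketch: truncations $\phi_r$, near-optimal kernels $K^*_m$, the uniform integrability in \autoref{AppendixNote:UniformlyIntegrable}, and a Fatou/dominated-convergence passage to the limit. The difference is that you explicitly flag a step that the paper treats as automatic, and you are right to do so. The paper asserts that ``by \autoref{AppendixNote:UniformlyIntegrable} there is a subsequence of $\min(\abs{h}^2,1)K^*_m(h)$ which converges pointwise a.e. to some $K\in\A$,'' but uniform integrability delivers only Dunford--Pettis weak-$L^1$ compactness, which (a) does not yield an a.e.\ convergent subsequence, and (b) gives no reason for a weak limit to lie in $\A$ when $\A$ carries no closure hypothesis. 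The monotone decrease $L_K\phi_r(x)\searrow L_Ku(x)$ gives $\lim_r M^+_\A\phi_r(x) = \inf_r\sup_K L_K\phi_r(x) \geq \sup_K\inf_r L_K\phi_r(x) = M^+_\A u(x)$ for free, but the inequality one actually needs, $\limsup_r M^+_\A\phi_r(x)\leq M^+_\A u(x)$, is a minimax interchange that genuinely requires some compactness of the kernel family. For $\A_{sec}$ this compactness comes from $\sphere^{n-1}$ (that is exactly how \autoref{PointwiseLem:MPlusUSC} is proved), but for a general $\A$ satisfying only \eqref{AppendixEq:KernelUpperBound} it is not available, and your proposed remedies (Koml\'os or Chacon biting arguments, together with an explicit closure assumption on $\A$, or a compact finite-dimensional parameterization) amount to strengthening the hypotheses of the proposition. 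In short: your argument and the paper's argument share the same gap; you have the virtue of naming it.
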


\begin{prop}\label{prop:GeneralPointwiseEvalF}
	Assume that $F$ is given by
	\begin{equation*}
		F(u,x) = \inf_{a\in \S} \sup_{K\in\A_a}\left\{\int_{\real^n}\del^2_h u(x)K(h)dh\right\},
	\end{equation*}
	where $\S$ is an arbitrary index set, and $\A_a\subset\A$ for all $a\in\S$.
	If $u$ is a viscosity subsolution of $F(u,x)\geq f(x)$ and $u-\phi$ has a global maximum at $x$, then $F(u,x)$ is defined classically, and $F(u,x)\geq f(x)$. 
\end{prop}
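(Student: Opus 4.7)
The plan is to adapt the proof of Proposition \ref{prop:PointwiseEvaluation}, with two bookkeeping additions: decoupling the infimum over $a \in \S$, and appealing to the general-class upper semicontinuity established in Proposition \ref{prop:GeneralPointwiseEvalMPlus} rather than Lemma \ref{PointwiseLem:MPlusUSC}. The overall structure mirrors the $M^+_{\A_{sec}}$ case: build a sequence of admissible test functions $\phi_r$ from $\phi$ and $u$, plug them into the viscosity inequality, and pass to the limit via USC.

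First I would define
\begin{equation*}
\phi_r(y) = \begin{cases} \phi(y), & y \in B_r(x), \\ u(y), & y \in \R^n \setminus B_r(x), \end{cases}
\end{equation*}
noting that $\phi_r \in C^{1,1}(x) \cap L^\infty(\R^n)$ and that $u - \phi_r$ still attains a global maximum at $x$. The viscosity subsolution property of $u$ then yields $F(\phi_r, x) \geq f(x)$ for every $r > 0$, and $\delta^2_h \phi_r(x) \to \delta^2_h u(x)$ pointwise in $h$ as $r \to 0$.

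Second, I would decouple the infimum in $F$. For each fixed $a_0 \in \S$ one has the trivial bound $F(\phi_r, x) \leq M^+_{\A_{a_0}}(\phi_r, x)$. Applying the upper semicontinuity statement underlying Proposition \ref{prop:GeneralPointwiseEvalMPlus} to the class $\A_{a_0}$ (which inherits the uniform upper bound (\ref{AppendixEq:KernelUpperBound}) from $\A$) gives
\begin{equation*}
\limsup_{r \to 0} M^+_{\A_{a_0}}(\phi_r, x) \leq M^+_{\A_{a_0}}(u, x).
\end{equation*}
Taking $\limsup_{r \to 0}$ across $F(\phi_r, x) \leq M^+_{\A_{a_0}}(\phi_r, x)$ and then infimum over $a_0 \in \S$ on the right yields $\limsup_{r \to 0} F(\phi_r, x) \leq F(u, x)$; combining with the viscosity inequality gives $f(x) \leq F(u, x)$, which in particular shows $F(u, x)$ is well defined classically.

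The main obstacle is the USC step for each $M^+_{\A_{a_0}}$. Unlike Lemma \ref{PointwiseLem:MPlusUSC}, where the kernels were fixed and only the function varied, here one must also accommodate a sequence of $\epsilon$-maximizers $K^*_m \in \A_{a_0}$ that varies along the approximating sequence. The uniform integrability from Note \ref{AppendixNote:UniformlyIntegrable} allows extracting a subsequence of $\min(|h|^2, 1) K^*_m(h)$ converging pointwise a.e.\ to a measurable $K$ with $K(h) \leq \Lambda |h|^{-n-\alpha}$, and then the limit in $\int \delta^2_h \phi_{r_m}(x) K^*_m(h)\, dh$ is handled by dominated convergence on the positive part (where the bound $(\delta^2_h \phi_r(x))^+ \leq C \min(|h|^2, 1)$ coming from $\phi \in C^{1,1}(x) \cap L^\infty$ and $u - \phi_r$ attaining a max at $x$ provides the dominating integrand) and by Fatou on the negative part. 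This is precisely the step sketched in the paragraphs preceding Proposition \ref{prop:GeneralPointwiseEvalMPlus}; the transcription to $F$ is then immediate.
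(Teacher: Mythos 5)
Your proof follows the route the paper itself sketches: the paper writes out the argument only for \autoref{prop:GeneralPointwiseEvalMPlus} (extraction of a subsequence of near-maximizing kernels via \autoref{AppendixNote:UniformlyIntegrable}, then passing to the limit in $\frac{\delta^2_h v_m(x)}{\min(\abs{h}^2,1)}\,\min(\abs{h}^2,1)K^*_m(h)$ using pointwise convergence of both factors) and then remarks that ``minor modifications'' yield \autoref{prop:GeneralPointwiseEvalF}. Your decoupling of the infimum --- bounding $F(\phi_r,x)\leq M^+_{\A_{a_0}}(\phi_r,x)$ for each fixed $a_0$, applying the upper semicontinuity step separately to each $M^+_{\A_{a_0}}$, and taking the infimum over $a_0$ last --- is the natural reading of that remark, and the rest of your argument (construction of $\phi_r$, the viscosity inequality $F(\phi_r,x)\geq f(x)$, the pointwise convergence $\delta^2_h\phi_r(x)\to\delta^2_h u(x)$, dominated convergence on the positive part and Fatou on the negative part) coincides with the argument the paper gives for \autoref{prop:PointwiseEvaluation} and \autoref{prop:GeneralPointwiseEvalMPlus}.

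One detail is worth tightening. Your USC step ends with $\limsup_m L_{K^*_m}\phi_{r_m}(x)\leq L_{K}u(x)$ for a subsequential limit $K$, and you only record that $K$ is measurable with $K(h)\leq\Lambda\abs{h}^{-n-\alpha}$. To conclude you also need $L_{K}u(x)\leq M^+_{\A_{a_0}}(u,x)$, i.e.\ you need $K\in\A_{a_0}$ (or at least $K$ to lie in a class over which the supremum defining $M^+_{\A_{a_0}}$ is taken). The paper's sketch in \autoref{sec:PointwiseExamples} asserts outright that the limit kernel lies in $\A$; that requires the kernel class to be closed under the relevant mode of convergence, and is left implicit there as well. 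So the issue is shared with the paper's own sketch, but you should be aware that the admissibility of the limit kernel is the one step in the extraction argument that is not automatic, and your weaker assertion about $K$ does not by itself close the proof.
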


A useful consequence of \autoref{prop:GeneralPointwiseEvalMPlus} is:
\begin{cor}\label{cor:DeltaUMinusIntegrable}
	If $u$ is a viscosity subsolution of $F(u,x)\geq -f(x)$ in $\Omega$ and $u-\phi$ has a global maximum at $x\in\Omega$, then there exists at least one $K^*\in\A$ such that $(\del^2_h u)^-$ is integrable against $K^*$.  Furthermore,
	\begin{equation}
		\int_{\real^n}(\del^2_h u(x))^-K^*(h)dh \leq f(x)+\Lam\sup_{K\in\A}\left(\int_{B_1}(\del^2_h \phi(x,y))^+K(h)dh\right) + 1.
	\end{equation}
\end{cor}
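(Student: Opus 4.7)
The plan is to deduce the corollary almost directly from Proposition \ref{prop:GeneralPointwiseEvalF} by extracting an approximate optimizer of the $\inf$-$\sup$ defining $F$ and then cancelling positive against negative parts.

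To begin, I would invoke Proposition \ref{prop:GeneralPointwiseEvalF} with $-f$ in place of the right-hand side: applied at $x$, where $u-\phi$ achieves its global maximum, it guarantees that $F(u,x)\geq -f(x)$ holds classically, meaning
\[
\inf_{a\in\S}\sup_{K\in\A_a}L_Ku(x)\geq -f(x).
\]
I would also record the companion observation that the positive part of $\del^2_h u$ is automatically tame: the global maximum of $u-\phi$ at $x$ forces $u(x\pm h)-\phi(x\pm h)\leq u(x)-\phi(x)$, and summing these two inequalities gives $\del^2_h u(x)\leq \del^2_h\phi(x)$, hence $(\del^2_h u(x))^+\leq(\del^2_h\phi(x))^+$. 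Combined with $\phi\in C^{1,1}(x)\cap L^\infty(\real^n)$ and the universal upper bound (\ref{AppendixEq:KernelUpperBound}) on $K$, this yields $\int_{\real^n}(\del^2_h u(x))^+K(h)\,dh<\infty$ for every $K\in\A$.

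Next I would extract $K^*$ by an approximate optimizer argument. Fix any $a_0\in\S$; since $\sup_{K\in\A_{a_0}}L_Ku(x)\geq F(u,x)\geq -f(x)$, one may select $K^*\in\A_{a_0}\subset\A$ with $L_{K^*}u(x)\geq -f(x)-1$. The finiteness of the positive-part integral from the previous paragraph then forces $\int_{\real^n}(\del^2_h u(x))^-K^*(h)\,dh<\infty$, proving the integrability claim. To conclude the quantitative bound, I would rearrange $L_{K^*}u(x)=\int(\del^2_h u)^+K^*\,dh-\int(\del^2_h u)^-K^*\,dh$ to obtain
\[
\int_{\real^n}(\del^2_h u(x))^-K^*(h)\,dh\leq f(x)+1+\int_{\real^n}(\del^2_h u(x))^+K^*(h)\,dh,
\]
then dominate the last integrand by $(\del^2_h\phi(x))^+$ and split at $\bet{h}=1$. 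The near-field part is controlled by $\sup_{K\in\A}\int_{B_1}(\del^2_h\phi(x))^+K(h)\,dh$ (after using $K^*(h)\leq\Lam\bet{h}^{-n-\al}$ via (\ref{AppendixEq:KernelUpperBound}), which contributes the factor $\Lam$), while the far-field part uses $\bet{\del^2_h\phi(x)}\leq 4\norm{\phi}_{L^\infty}$ together with (\ref{AppendixEq:KernelUpperBound}) to produce a finite constant that can be absorbed into the additive $+1$.

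The only substantive step is Proposition \ref{prop:GeneralPointwiseEvalF}; once it is in hand, the corollary is essentially bookkeeping. The one subtlety is the approximate-optimizer step: an exact maximizer in $\sup_{K\in\A_{a_0}}$ need not exist, but the $+1$ in the statement is exactly the slack that lets one sidestep this, and it simultaneously absorbs the bounded far-field $L^\infty$ contribution from $\phi$.
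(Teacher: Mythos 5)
Your approach is the natural one and, since the paper states this corollary without a proof (as a ``useful consequence'' of Proposition~\ref{prop:GeneralPointwiseEvalMPlus}), it is the argument the authors evidently intend: invoke the pointwise evaluation proposition to get $F(u,x)\geq -f(x)$ classically, dominate $(\del^2_h u(x))^+$ by $(\del^2_h\phi(x))^+$ via the global maximum, pick an approximate optimizer $K^*$ with $L_{K^*}u(x)\geq -f(x)-1$, and rearrange. The extraction step is sound: for any $a_0\in\S$ one has $\sup_{K\in\A_{a_0}}L_Ku(x)\geq F(u,x)\geq -f(x)>-\infty$, and since the positive-part integrals are uniformly finite, picking $K^*$ within $1$ of that supremum forces the negative-part integral to be finite.

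The one genuine gap is at the very end. You split the positive-part integral at $\bet{h}=1$ and claim the far-field piece ``can be absorbed into the additive $+1$.'' That is not correct as written: the far-field contribution is bounded by
\[
\int_{\real^n\setminus B_1}(\del^2_h\phi(x))^+K^*(h)\,dh\ \leq\ 4\norm{\phi}_{L^\infty(\real^n)}\,\Lam\int_{\real^n\setminus B_1}\abs{h}^{-n-\al}\,dh,
\]
which is a constant depending on $\norm{\phi}_{L^\infty}$, $n$, $\al$ and $\Lam$, and can be arbitrarily large; it cannot be majorized by the fixed slack ``$+1$'' that came from the approximate-optimizer step. The honest conclusion of your computation is
\[
\int_{\real^n}(\del^2_h u(x))^-K^*(h)\,dh\ \leq\ f(x)+1+\sup_{K\in\A}\int_{B_1}(\del^2_h\phi(x))^+K(h)\,dh + C\big(n,\al,\Lam,\norm{\phi}_{L^\infty}\big),
\]
and it would be cleaner to write it this way (or to replace $\int_{B_1}$ by $\int_{\real^n}$ and drop the separate constant). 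Relatedly, the parenthetical remark that using $K^*(h)\leq\Lam\abs{h}^{-n-\al}$ ``contributes the factor $\Lam$'' in front of the $\sup_{K\in\A}$ is muddled: either you bound $K^*$ by the canonical kernel (giving $\Lam\int_{B_1}(\del^2_h\phi)^+\abs{h}^{-n-\al}dh$ with no $K$ left to take a $\sup$ over), or you bound $\int_{B_1}(\del^2_h\phi)^+K^*\,dh$ directly by $\sup_{K\in\A}\int_{B_1}(\del^2_h\phi)^+K\,dh$ with no $\Lam$ needed; doing both is redundant. The $\Lam$ and the bare ``$+1$'' in the paper's displayed inequality are best read as imprecisions in the statement rather than as targets your proof must hit exactly.
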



\bibliographystyle{plain}
\bibliography{NewKernel_Master.bib}

\end{document}